\documentclass[12pt]{amsart}
\usepackage[english]{babel}
\usepackage{amsmath}
\usepackage{amsthm}
\usepackage{anysize}
\usepackage{mathtools}
\usepackage{mathrsfs}
\usepackage{amssymb}
\usepackage{xcolor}
\usepackage{xfrac}
\usepackage{esint}
\usepackage{graphicx}
\usepackage{float}
\usepackage{array}
\usepackage{enumitem}
\usepackage{tikz-cd}
\usepackage{centernot}
\usepackage{stmaryrd}
\usepackage{comment}
\excludecomment{confidential}
\usepackage{graphicx,color}
\usepackage{tikz,pgfplots,pgf}
\usepackage[font=small]{caption}

\newtheorem{theorem}{Theorem}[section]
\newtheorem{lemma}{Lemma}[section]
\newtheorem{proposition}{Proposition}[section]
\newtheorem{corollary}{Corollary}[section]
\theoremstyle{definition}

\theoremstyle{definition}

\newtheorem{definition}{Definition}[section]

\usepackage[english]{babel}
\usepackage{amsmath}
\usepackage{amsthm}
\usepackage{mathtools}
\usepackage{mathrsfs}
\usepackage{amssymb}
\usepackage{xcolor}
\usepackage{xfrac}
\usepackage{esint}
\usepackage{graphicx}
\usepackage{float}
\usepackage{array}
\usepackage{enumitem}
\usepackage[new]{old-arrows}
\usepackage[all,cmtip]{xy}
\usepackage{tikz-cd}
\usepackage{centernot}
\usepackage{stmaryrd}
\usepackage{comment}
\usepackage{bbm}

\newcommand{\mc}{\mathcal}

\newcommand{\R}{\mathbb{R}}
\newcommand{\N}{\mathbb{N}}

\renewcommand{\O}{\Omega}

\renewcommand{\a}{\alpha}

 \renewcommand{\o}{\omega}

\DeclareMathOperator{\essran}{essran}
\DeclareMathOperator{\supp}{supp}

\DeclareMathOperator{\dist}{dist}
\DeclareMathOperator{\pr}{pr}

\numberwithin{equation}{section}

\makeatletter
\@namedef{subjclassname@2020}{\textup{2020} Mathematics Subject Classification}
\makeatother

\begin{document}

\title[Distributional embeddings of the first limit BRS space]{Distributional embeddings of the first limit Bourgain--Rosenthal--Schechtman space}

\author[J. C. Sampedro]{Juan Carlos Sampedro} \thanks{The author has been supported by the Ministry of Science and Innovation of Spain under the
	Research Grant PID2024–155890NB-I00}
\address{Departamento de Matemática Aplicada y Ciencias de la Computación \\
	Avenida de los Castros 46 \\
	Universidad de Cantabria (UC) \\
	Santander, 39005, Spain.}
\email{juancarlos.sampedro@unican.es}

\keywords{Bourgain--Rosenthal--Schechtman spaces, distributional embeddings,
	independent sums, Bernoulli factors, linear isometries}

\subjclass[2020]{Primary 46B20; Secondary 46B25, 46B09, 46E30, 60G50}

	\begin{abstract}
			We classify the distributional self-embeddings of the centered first limit Bourgain--Rosenthal--Schechtman space \(R_\omega^{p,0}\), \(1<p<\infty\). Using a Boolean rigidity principle for its canonical independent-sum realization, we show that every such embedding is induced by a finite packing of Bernoulli factors. As a consequence, we also prove that \(R_\omega^{p,0}\) admits no proper non-zero internal compressions. Moreover, for \(p\notin2\mathbb N\), we obtain a complete description of the linear isometric embeddings of the non-centered space \(R_\omega^p\), and, for \(p\neq2\), we determine its group of surjective linear isometries.
	\end{abstract}

\maketitle

\section{Introduction}

Let \(1<p<\infty\). In their seminal work \cite{BRS81}, Bourgain, Rosenthal, and
Schechtman introduced a transfinite family $(R_\alpha^p)_{\alpha<\omega_1}$ of complemented subspaces of \(L^p\). The construction is inherently distributional: successor steps are obtained by means of disjoint sums,
whereas limit steps are obtained by means of independent sums. The
first limit space, \(R_\omega^p\), is already closely connected with
the classical Rosenthal space \(X_p\). More precisely, $R_\omega^p\simeq X_p$,
see, for instance, \cite{A99,KM24}.

Recent work of Konstantos and Motakis \cite{KM24,KM25} has emphasized the operator-theoretic relevance of the distributional structure of the Bourgain--Rosenthal--Schechtman spaces (BRS spaces for short). They developed a framework of distributional embeddings for the centered spaces
\[
R_\alpha^{p,0}
=
\left\{
f\in R_\alpha^p:
\mathbb E[f]=0
\right\},
\]
constructed explicit finite-dimensional decompositions, and used distributional embeddings to reduce bounded operators on limit BRS spaces to scalar block-diagonal operators up to an arbitrarily small error. They also observed that their distributional embedding schemes do not exhaust all possible distributional embeddings and raised the problem of obtaining a combinatorial description of all such embeddings between BRS spaces, see \cite[Remark~5.7]{KM25}.

The purpose of the present note is to provide a complete answer to the corresponding self-embedding problem for the first limit centered space \(R_\omega^{p,0}\). We classify all its distributional self-embeddings and prove that it admits no proper non-zero internal compressions. As an application, for exponents \(p\notin2\mathbb N\), we also obtain a complete description of the linear isometric embeddings of the non-centered space \(R_\omega^p\). Moreover, for the larger range \(p\neq2\), we determine the group of surjective linear isometries of \(R_\omega^p\).

We work with the following concrete independent-sum realization. For
every \(n\in\mathbb N\), let $\Omega_n=\{0,1\}^n$
be equipped with the uniform probability measure, and set $H_n=L_p^0(\Omega_n)$.
On the product probability space $\Omega=\prod_{n=1}^{\infty}\Omega_n$,
we denote by $J_n:H_n\to L^p(\Omega)$
the canonical lifting to the \(n\)-th exterior coordinate. Then
\[
X_\omega:=
\overline{
	\operatorname{span}
	\{J_n(H_n):n\in\mathbb N\}
}^{\,L^p(\Omega)}
\]
is an isometric distributional realization of \(R_\omega^{p,0}\).

The first ingredient is an elementary rigidity principle for
finite-valued elements of \(X_\omega\). Every \(x\in X_\omega\) admits
a canonical exterior decomposition
\[
x=\sum_{n=1}^{\infty}J_nx_n,
\qquad
x_n\in H_n.
\]
We prove that, if \(x\) has at most \(q\) essential values, then
\[
\left|\left\{n\in\mathbb N:x_n\neq 0\right\}\right|\leq q-1.
\]
The proof relies on the independence of the exterior blocks, the
elementary inclusion $\supp(\mu)+\supp(\nu)\subseteq \supp(\mu*\nu)$ for Borel probability measures on \(\mathbb R\), and a cardinality
estimate for finite sumsets in the real line.

Passing to the unitalization $\widehat X_\omega=\mathbb R\mathbf 1\oplus X_\omega$,
we deduce a Boolean rigidity theorem: every non-trivial indicator
belonging to \(\widehat X_\omega\) depends on a unique exterior
coordinate. Equivalently, if $\mathbf 1_A\in\widehat X_\omega$,
then either \(A\) is trivial modulo null sets or there exist a unique
index \(n\in\mathbb N\) and a unique non-trivial subset $B\subseteq\Omega_n$
such that $\mathbf 1_A=J_n\mathbf 1_B$ almost everywhere.

This rigidity principle allows us to classify all distributional self-embeddings of \(X_\omega\). Recall that a linear operator $T:X_\omega \to X_\omega$ is called a distributional embedding if
\[
\dist(Tx)=
\dist(x)
\quad
\text{for every }x\in X_\omega.
\]
We prove that every such operator is determined by finite packing
data. More precisely, there exist a map $\kappa:\mathbb N\to\mathbb N$
and, for every \(k\in\mathbb N\), a measure-preserving factor
\[
\Phi_k:\Omega_k\longrightarrow\prod_{n\in I_k}\Omega_n,
\qquad
I_k=\kappa^{-1}(\{k\}),
\]
such that $\sum_{n\in I_k}n\leq k$ for every $k\in\mathbb N$. The corresponding embedding is given on finite exterior sums by
\[
T\left(\sum_{n=1}^{N}J_nf_n\right)
=\sum_{k=1}^{\infty}
J_k\left(\sum_{n\in I_k\cap\{1,\dots,N\}}
f_n\circ\pr_n\circ\Phi_k\right).
\]
Conversely, every admissible family of finite packing data defines a
distributional self-embedding of \(X_\omega\).

Thus, distributional self-embeddings of \(R_\omega^{p,0}\) cannot mix
the exterior blocks arbitrarily. Each source block is sent into a
unique target block. Several source blocks may share the same target
block only if they can be realized as jointly independent factors
inside the corresponding finite Bernoulli cube. The exact capacity
restriction is
\[
\sum_{\kappa(n)=k}n\leq k.
\]

As an application of this classification, we also study linear
isometries of the non-centered first limit space \(R_\omega^p\). In our
concrete realization, this space is identified with the unitalization $\widehat X_\omega$.
Assuming that $p\notin2\mathbb N$, we prove that every linear isometric embedding $U:R_\omega^p\to R_\omega^p$ is obtained, up to a global change of sign, by adjoining the constant functions to a distributional embedding $T:R_\omega^{p,0}\to R_\omega^{p,0}.$ More precisely, there exists $\varepsilon\in\{-1,1\}$ such that
\[
U(c\mathbf 1+x)=\varepsilon c\mathbf 1+\varepsilon Tx
\]
for every $c\in\mathbb R$ and $x\in R_\omega^{p,0}$. Thus, up to a global change of sign, the preceding finite packing
description also classifies all linear isometric embeddings of \(R_\omega^p\). For the larger range \(p\neq2\), we
determine the group of surjective linear isometries of \(R_\omega^p\). Namely, this group is isomorphic to
\[
\{-1,1\}\times\prod_{k=1}^{\infty}\mathfrak S_{2^k},
\]
where $\mathfrak S_{2^k}$ denotes the symmetric group on \(2^k\) elements.

Finally, following the terminology of Konstantos and Motakis \cite{KM25}, we study internal compressions. Let $0<\theta\leq 1$. A linear operator $S:X_\omega\to X_\omega$ is called a \(\theta\)-compression if
\[
\dist(Sx)=\theta\dist(x)+
(1-\theta)\delta_0
\quad
\text{for every }x\in X_\omega.
\]
We prove that \(X_\omega\) admits no proper non-zero internal compressions, that is, necessarily $\theta=1$.
The proof again relies on finite-valued rigidity. A hypothetical proper compression would force its entire range to be supported on a proper event depending on finitely many exterior coordinates. Such an event can support only a finite-dimensional subspace of \(X_\omega\), which contradicts injectivity.

The restriction to the first limit space is essential to the present argument. The space \(R_\omega^{p,0}\) admits a particularly transparent realization as an independent sum of finite Bernoulli blocks. For higher BRS spaces, the recursive interaction between disjoint and independent sums introduces an additional tree structure. A complete classification in that setting would require a corresponding tree-valued notion of packing.

The paper is organized as follows. In Section~2, we introduce the independent-sum realization of \(R_\omega^{p,0}\), establish its canonical exterior decomposition, prove the finite-valued and Boolean rigidity properties, and derive the finite-factor description of distributional embeddings. In Section~3, we prove the classification theorem and discuss some consequences. Section~4 is devoted to the classification of linear isometries of \(R_\omega^p\). Finally, Section~5 establishes the absence of proper internal compressions.

\section{The first limit BRS space as an independent sum}

Throughout the paper, we fix a real number $1<p<\infty$.
We use the notation $R_{\alpha}^{p,0}$ for the mean-zero Bourgain--Rosenthal--Schechtman spaces. We refer to \cite{BRS81} for the original construction, to \cite{A99} for the isomorphic theory of independent sums, and to \cite{KM25} for the distributional framework and the notation used below.

For every $n\in\mathbb N$, let $\Omega_n=\{0,1\}^n$
equipped with the uniform probability measure $\mu_n$, and define 
\[H_n=L_p^0(\Omega_n,\mu_n)=
\left\{ f\in L^p(\Omega_n,\mu_n):
	\int_{\Omega_n}f \; d\mu_n=0 \right\}.
\]
We consider the product probability space
\begin{equation}
	\label{Eq3}
(\Omega,\Sigma,\mu)=\prod_{n=1}^{\infty}
(\Omega_n,\mathcal P(\Omega_n),\mu_n)
\end{equation}
and denote by $\pi_n:\Omega\to\Omega_n$
the $n$-th coordinate projection. Naturally, the notation $\mc{P}(\O_n)$ in \eqref{Eq3} stands for the power set of $\O_n$. For every \(n\in\mathbb N\), let $J_n:L^p(\Omega_n,\mu_n)\to L^p(\Omega,\mu)$
be the coordinate lifting given by $J_nf=f\circ\pi_n$.
We use the same notation for its restriction to \(H_n\).
Note that the subspaces $J_n(H_n)$, \(n\in\mathbb N\), are independent and consist of mean-zero random variables.
We shall work with the following concrete realization of the first limit BRS space:
\[
X_\omega:=\overline{\operatorname{span}
	\left\{ J_n(H_n):n\in\mathbb N\right\}}^{L^p(\Omega)}.
\]
This space is a distributional realization of \(R_\omega^{p,0}\), so, in what follows, we identify $X_\omega=R_\omega^{p,0}$.
For each $n\in\mathbb N$, let $\Sigma_n=\sigma(\pi_n)$
be the finite sub-$\sigma$-algebra generated by the $n$-th exterior coordinate. 

\subsection{The canonical exterior decomposition}

For every \(N \in \mathbb{N}\), let $\mathcal{G}_N := \sigma(\pi_1,\dots,\pi_N)$ be the finite sub-\(\sigma\)-algebra generated by the first \(N\) exterior
coordinates, and denote by
\[
\mc{P}_N:=
\mathbb{E}[\;\cdot \mid \mathcal{G}_N]
\colon
L^p(\Omega)\longrightarrow L^p(\Omega,\mathcal{G}_N,\mu)
\]
the corresponding conditional expectation. We shall also use the coordinate conditional expectations $\mc{E}_n
:=\mathbb{E}[\; \cdot \mid \Sigma_n]$, $\Sigma_n := \sigma(\pi_n)$. The next proposition provides the canonical exterior decomposition of the elements of \(X_\omega\).

\begin{proposition}
	For every \(x \in X_\omega\), there exists a unique sequence $(x_n)_{n=1}^{\infty}$, $x_n \in H_n$, such that
	\begin{equation}
		\label{Eq2}
	x = \sum_{n=1}^{\infty} J_n x_n
\end{equation}
	with convergence in \(L^p(\Omega)\).
\end{proposition}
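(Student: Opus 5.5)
The plan is to obtain the coefficients through the coordinate conditional expectations and to use the martingale structure given by \(\mathcal P_N\). Set \(x_n\) to be the unique element of \(L^p(\Omega_n,\mu_n)\) with \(J_n x_n=\mathcal E_n x\). This is well defined because \(\Sigma_n=\sigma(\pi_n)\) consists of events of the form \(\pi_n^{-1}(B)\), so every \(\Sigma_n\)-measurable function factors as \(f\circ\pi_n\).

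\textbf{Coefficients and finite sums.} First I check that \(x_n\in H_n\). Since \(\mathcal E_n\) preserves expectation and \(X_\omega\) consists of mean-zero functions, \(\int x_n\,d\mu_n=\mathbb E[x]=0\). Next I look at a finite sum \(y=\sum_{m\le M}J_m y_m\) with \(y_m\in H_m\). Independence of the coordinates gives \(\mathcal E_n J_m y_m=\mathbb E[J_m y_m]=0\) for \(m\neq n\), and \(\mathcal E_n J_n y_n=J_n y_n\). Hence \(\mathcal E_n y=J_n y_n\). The same independence argument, applied to \(\mathcal G_N\), gives \(\mathcal P_N y=\sum_{m\le \min(N,M)}J_m y_m\).

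\textbf{Passage to arbitrary \(x\).} Both \(\mathcal E_n\) and \(\mathcal P_N\) are contractions on \(L^p\), and finite sums are dense in \(X_\omega\). So for general \(x\in X_\omega\) I approximate \(x\) by finite sums \(y^{(j)}\to x\) and pass to the limit in the identity \(\mathcal P_N y^{(j)}=\sum_{m\le N}\mathcal E_m y^{(j)}\). This gives
\[
\mathcal P_N x=\sum_{n=1}^N J_n x_n\qquad\text{for every }N.
\]

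\textbf{Convergence.} Existence now reduces to showing \(\mathcal P_N x\to x\) in \(L^p\). The \(\sigma\)-algebras \(\mathcal G_N\) increase and generate \(\Sigma\). By the \(L^p\) martingale convergence theorem (\(1<p<\infty\)), \(\mathbb E[x\mid\mathcal G_N]\to\mathbb E[x\mid\Sigma]=x\) in \(L^p\). An elementary alternative avoids martingales: \(\mathcal P_N\) is a uniformly bounded family converging to the identity on the dense set of finite sums, so it converges strongly. This gives \eqref{Eq2}.

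\textbf{Uniqueness.} Suppose \(x=\sum_n J_n x_n'\) converges in \(L^p\) with \(x_n'\in H_n\). Applying the continuous operator \(\mathcal E_m\) to the partial sums and using the finite-sum computation above, I get \(\mathcal E_m x=J_m x_m'\). Since \(J_m\) is injective (indeed isometric), this forces \(x_m'=x_m\).

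The only point needing care is the convergence of the series; with the density-plus-uniform-boundedness argument it is routine. Note also that the convergence is of the partial sums in their natural order, as the statement requires, and no unconditionality is claimed.
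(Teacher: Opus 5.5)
Your proposal is correct and follows essentially the same route as the paper. You recover the coefficients via \(\mathcal E_n\) and derive \(\mathcal P_N x=\sum_{n\le N}J_nx_n\) by density and contractivity. Your convergence argument (uniform boundedness plus density) is exactly the paper's \(\varepsilon\)-argument, with the martingale theorem as an optional alternative, and uniqueness again comes from applying \(\mathcal E_m\). The only cosmetic difference is how you get \(\mathcal E_n x\in J_n(H_n)\): you factor \(\Sigma_n\)-measurable functions through \(\pi_n\) and use \(\mathbb E[x]=0\), whereas the paper notes that \(\mathcal E_n x^{(j)}\in J_n(H_n)\) and that this finite-dimensional subspace is closed.
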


\begin{proof}
	We first consider an element of the algebraic span, $x = \sum_{m=1}^{M} J_m y_m$, $y_m \in H_m$.
	Fix \(n\in\mathbb N\). If \(1\leq n\leq M\), then \(J_ny_n\) is \(\Sigma_n\)-measurable and $\mc E_n(J_ny_n)=J_ny_n$. If \(m\neq n\), then \(J_my_m\) is independent of \(\Sigma_n\), and
	hence $\mc E_n(J_my_m)=0$. Therefore,
	\begin{equation}
		\label{Eq4}
	\mc{E}_n x=
	\begin{cases}
		J_n y_n, & 1 \leq n \leq M, \\[1ex]
		0, & n > M.
	\end{cases}
	\end{equation}
	Similarly, since \(\mathcal{G}_N\) is generated by the first \(N\)
	exterior coordinates,
	\[
	\mc{P}_N(J_m y_m)=
	\begin{cases}
		J_m y_m, & m \leq N, \\[1ex]
		0, & m > N.
	\end{cases}
	\]
	Therefore,
	\[
	\mc{P}_N x=\sum_{m=1}^{\min\{M,N\}} J_m y_m.
	\]
	We now consider an arbitrary element \(x \in X_\omega\). By the	definition of \(X_\omega\), there exists a sequence
	\((x^{(j)})_{j=1}^{\infty}\) in the algebraic span of the spaces \(J_n(H_n)\) such that
	\[
	\lim_{j \to \infty}
	\lVert x^{(j)} - x \rVert_{L^p(\Omega)}=0.
	\]
	For every \(n \in \mathbb{N}\), the conditional expectation \(\mc{E}_n\) is contractive. Hence, $\mc{E}_n x^{(j)}
	\to\mc{E}_n x$ in \(L^p(\Omega)\). For every \(j\), equation \eqref{Eq4} implies that
	\(\mc{E}_n x^{(j)} \in J_n(H_n)\). Since \(H_n\) is finite-dimensional, the subspace \(J_n(H_n)\) is closed in \(L^p(\Omega)\) and, therefore, $\mc{E}_n x \in J_n(H_n)$.
	Consequently, there exists a unique vector \(x_n \in H_n\) such that $J_n x_n = \mc{E}_n x$.
	For every \(N \in \mathbb{N}\), the identity
	\[
	\mc{P}_N x^{(j)}=\sum_{n=1}^{N} \mc{E}_n x^{(j)}
	\]
	holds on the algebraic span. Passing to the limit as \(j \to \infty\),
	we obtain
	\[
	\mc{P}_N x=\sum_{n=1}^{N} \mc{E}_n x=\sum_{n=1}^{N} J_n x_n.
	\]
	
	It remains to prove that \(\mc{P}_N x \to x\) in \(L^p(\Omega)\). Let \(\varepsilon > 0\) and choose an element \(y\) in the algebraic span of the spaces \(J_n(H_n)\) such that $\lVert x-y \rVert_{L^p(\Omega)} < \varepsilon$.
	There exists \(N_0 \in \mathbb{N}\) such that \(\mc{P}_N y = y\) for every \(N \geq N_0\). Since \(\mc{P}_N\) is contractive, for every \(N \geq N_0\) we have
	\[
	\begin{aligned}
		\lVert \mc{P}_N x - x \rVert_{L^p(\Omega)}
		&\leq
		\lVert \mc{P}_N(x-y) \rVert_{L^p(\Omega)}+
		\lVert \mc{P}_N y-y \rVert_{L^p(\Omega)}+
		\lVert y-x \rVert_{L^p(\Omega)} \\
		&\leq 2 \lVert x-y \rVert_{L^p(\Omega)} < 2\varepsilon.
	\end{aligned}
	\]
	Therefore, \(\mc{P}_N x \to x\) in \(L^p(\Omega)\), and hence \eqref{Eq2} holds. Finally, suppose that
	\[
	x=\sum_{n=1}^{\infty} J_n x_n=\sum_{n=1}^{\infty} J_n y_n
	\]
	are two convergent exterior decompositions with \(x_n,y_n \in H_n\). Applying \(\mc{E}_m\) to both identities yields $J_m x_m = J_m y_m$. Since \(J_m\) is injective, \(x_m = y_m\). Thus, the decomposition is
	unique.
\end{proof}

The preceding proposition allows us to introduce the following notation.

\begin{definition}
	Let $x = \sum_{n=1}^{\infty} J_n x_n \in X_\omega$
	be the canonical exterior decomposition of \(x\). The exterior support
	of \(x\) is the set
	\[
	\supp_{\mathrm{ext}}(x):=
	\{n \in \mathbb{N} : x_n \neq 0\}.
	\]
\end{definition}

\subsection{Finite-valued elements have finite exterior support}

For a real-valued random variable \(f\) on a probability space $(\O,\mu)$, we denote by
$\operatorname{dist}(f):=f_{\#}\mu$ its distribution, that is, the Borel probability measure on
\(\mathbb R\) defined by
\[
\operatorname{dist}(f)(B)=\mu\bigl(f^{-1}(B)\bigr)
\]
for every Borel set \(B\subseteq\mathbb R\). We denote by
\(\essran(f)\) its essential range, namely the set of all \(a\in\mathbb R\) such that $\mu\bigl(\{|f-a|<\varepsilon\}\bigr) > 0$ for every $\varepsilon>0$. Equivalently, 
\[ 
\essran(f) = \supp\bigl(\dist(f)\bigr),
\]
where \(\supp(\nu)\) denotes the topological support of a Borel probability measure \(\nu\) on \(\mathbb R\), that is, the set of all \(a\in\mathbb R\) such that $\nu\bigl((a-\varepsilon,a+\varepsilon)\bigr) > 0$ for every $\varepsilon>0$. Indeed, 
\[ 
\dist(f) \bigl( (a-\varepsilon,a+\varepsilon) \bigr) = \mu\bigl(\{|f-a|<\varepsilon\}\bigr). 
\]

We first recall two elementary facts and include their proofs for completeness.

\begin{lemma}\label{l2.1}
	Let \(\mu\) and \(\nu\) be Borel probability measures on \(\mathbb{R}\).
	Then
	\[
	\supp(\mu)+\supp(\nu)
	\subseteq \supp(\mu * \nu).
	\]
\end{lemma}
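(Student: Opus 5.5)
To prove Lemma~\ref{l2.1}, we fix points \(a\in\supp(\mu)\) and \(b\in\supp(\nu)\) and show directly from the definition of the topological support that \(a+b\in\supp(\mu*\nu)\). That is, we check that every open interval centred at \(a+b\) has positive \(\mu*\nu\)-measure.

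The key step is a product estimate for the convolution. Recall that
\[
(\mu*\nu)(B)=(\mu\otimes\nu)\bigl(\{(s,t)\in\mathbb R^2: s+t\in B\}\bigr)
\]
for every Borel set \(B\subseteq\mathbb R\). Fix \(\varepsilon>0\) and put \(\delta=\varepsilon/2\). If \(s\in(a-\delta,a+\delta)\) and \(t\in(b-\delta,b+\delta)\), the triangle inequality gives
\[
|s+t-(a+b)|<2\delta=\varepsilon.
\]
Hence the rectangle \((a-\delta,a+\delta)\times(b-\delta,b+\delta)\) is contained in \(\{(s,t): s+t\in(a+b-\varepsilon,a+b+\varepsilon)\}\).

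By monotonicity of measures and the definition of the product measure on rectangles,
\[
(\mu*\nu)\bigl((a+b-\varepsilon,a+b+\varepsilon)\bigr)\geq \mu\bigl((a-\delta,a+\delta)\bigr)\,\nu\bigl((b-\delta,b+\delta)\bigr).
\]
Both factors on the right are positive, because \(a\in\supp(\mu)\) and \(b\in\supp(\nu)\). So the left-hand side is positive. Since \(\varepsilon>0\) was arbitrary, \(a+b\in\supp(\mu*\nu)\), which proves the inclusion.

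There is no real obstacle here. The only point needing care is to use the product-measure formula for the convolution, rather than an integral over densities, since \(\mu\) and \(\nu\) need not be absolutely continuous.
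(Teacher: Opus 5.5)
Your proof is correct and is essentially the paper's argument. The paper likewise picks neighbourhoods $V$ of $a$ and $W$ of $b$ with $V+W\subseteq U$ and bounds $(\mu*\nu)(U)$ from below by $\mu(V)\nu(W)>0$; it writes the convolution in the Fubini form $\int_{\mathbb R}\mu(U-y)\,d\nu(y)$ instead of as the product measure of $\{(s,t):s+t\in U\}$, and your choice $\delta=\varepsilon/2$ simply makes the containment $V+W\subseteq U$ explicit.
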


\begin{proof}
	Let \(a \in \supp(\mu)\) and
	\(b \in \supp(\nu)\). We prove that
	\(a+b \in \supp(\mu * \nu)\). Let \(U\) be an open neighborhood of \(a+b\) and choose open
	neighborhoods \(V\) of \(a\) and \(W\) of \(b\) such that
	\(V+W \subseteq U\). Since \(a \in \supp(\mu)\) and
	\(b \in \supp(\nu)\), we have \(\mu(V)>0\) and
	\(\nu(W)>0\). Therefore,
	\[
	\begin{aligned}
		(\mu * \nu)(U)
		=
		\int_{\mathbb{R}} \mu(U-y) \, d\nu(y)
		\geq
		\int_W \mu(U-y) \, d\nu(y)
		\geq
		\mu(V)\nu(W)
		>
		0.
	\end{aligned}
	\]
	Hence \(a+b \in \supp(\mu * \nu)\).
\end{proof}

\begin{lemma}\label{l2.2}
	Let \(A_1,\dots,A_m\) be non-empty finite subsets of \(\mathbb{R}\).
	Then
	\[
	\lvert A_1+\cdots+A_m \rvert
	\geq 1+\sum_{j=1}^m \bigl(\lvert A_j\rvert-1\bigr).
	\]
	In particular, if \(\lvert A_j\rvert \geq 2\) for every
	\(j \in \{1,\dots,m\}\), then
	\[
	\lvert A_1+\cdots+A_m \rvert \geq m+1.
	\]
\end{lemma}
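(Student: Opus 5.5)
The plan is to argue by induction on \(m\), using the fact that finite subsets of \(\mathbb R\) are totally ordered, so that new sums can be exhibited explicitly. The second assertion then follows directly from the first: if \(\lvert A_j\rvert\geq 2\) for every \(j\), each term \(\lvert A_j\rvert-1\) is at least \(1\), so the right-hand side is at least \(1+m\).

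For \(m=1\) the inequality reads \(\lvert A_1\rvert\geq \lvert A_1\rvert\), which is trivial. The key step is the two-set estimate
\[
\lvert A+B\rvert\geq \lvert A\rvert+\lvert B\rvert-1
\]
for non-empty finite \(A,B\subseteq\mathbb R\). To prove it, write \(A=\{a_1<\cdots<a_r\}\) and \(B=\{b_1<\cdots<b_s\}\), and consider the chain
\[
a_1+b_1<a_2+b_1<\cdots<a_r+b_1<a_r+b_2<\cdots<a_r+b_s .
\]
Each inequality is strict because exactly one summand increases strictly at each step. This exhibits \(r+s-1\) pairwise distinct elements of \(A+B\).

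For the inductive step, set \(S=A_1+\cdots+A_{m-1}\), which is a non-empty finite set. Applying the two-set estimate to \(S\) and \(A_m\) and then the induction hypothesis gives
\[
\lvert S+A_m\rvert\geq \lvert S\rvert+\lvert A_m\rvert-1\geq 1+\sum_{j=1}^{m-1}\bigl(\lvert A_j\rvert-1\bigr)+\bigl(\lvert A_m\rvert-1\bigr),
\]
which is the claimed bound for \(m\) sets.

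No step here is a real obstacle. The only point needing care is the strictness of the inequalities in the monotone chain, and this rests solely on the elements of each set being listed in strictly increasing order.
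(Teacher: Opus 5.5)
Your proposal is correct and follows the paper's proof: the same strictly increasing chain \(a_1+b_1<\cdots<a_r+b_1<a_r+b_2<\cdots<a_r+b_s\) gives \(\lvert A+B\rvert\geq \lvert A\rvert+\lvert B\rvert-1\), and induction on \(m\) finishes. The only difference is that you write out the inductive step, applying the two-set estimate to \(S=A_1+\cdots+A_{m-1}\) and \(A_m\), which the paper leaves implicit.
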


\begin{proof}
	It is enough to prove that \(\lvert A+B\rvert \geq \lvert A\rvert+\lvert B\rvert-1\) for every pair of non-empty finite subsets \(A,B \subseteq \mathbb{R}\). Write
	\[
	A=\{a_1<\cdots<a_r\}, \qquad B=\{b_1<\cdots<b_s\}.
	\]
	The following \(r+s-1\) elements of \(A+B\) are strictly increasing:
	\[
	a_1+b_1<a_2+b_1
	<\cdots<
	a_r+b_1<a_r+b_2<\cdots<
	a_r+b_s.
	\]
	Thus, \(\lvert A+B\rvert \geq r+s-1=\lvert A\rvert+\lvert B\rvert-1\).
	The general statement follows by induction on \(m\).
\end{proof}

We can now prove the finite-support property for the canonical exterior
decomposition.

\begin{proposition}\label{p2.2}
	Let $x=\sum_{n=1}^{\infty} J_n x_n \in X_\omega$ be the canonical exterior decomposition of \(x\). Assume that
	\(\essran(x)\) is finite and satisfies \(\lvert \essran(x)\rvert \leq q\). Then $\supp_{\mathrm{ext}}(x)$ is finite and
	\[
	\bigl\lvert \supp_{\mathrm{ext}}(x) \bigr\rvert \leq q-1.
	\]
\end{proposition}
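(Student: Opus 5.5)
The plan is to argue by contradiction. Suppose $\supp_{\mathrm{ext}}(x)$ contains at least $q$ distinct indices $n_1<\dots<n_q$. I will show that $\essran(x)$ then has at least $q+1$ points. Set $F=\{n_1,\dots,n_q\}$ and split
\[
x = \sum_{j=1}^{q} J_{n_j}x_{n_j} + y, \qquad y:=\sum_{n\notin F} J_n x_n ,
\]
where the series for $y$ converges in $L^p(\Omega)$. This holds because $y = x-\sum_{j}J_{n_j}x_{n_j}$, and the partial sums of $\sum_{n\notin F}J_nx_n$ are $\mc P_N x$ minus finitely many fixed terms.

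The random variables $J_{n_1}x_{n_1},\dots,J_{n_q}x_{n_q},y$ are jointly independent. The first $q$ are measurable with respect to the distinct coordinates $\pi_{n_j}$. The variable $y$ is an $L^p$-limit (hence a.e. limit along a subsequence) of functions of the coordinates $\pi_n$, $n\notin F$. It is therefore measurable with respect to $\sigma(\pi_n:n\notin F)$, which is independent of $\sigma(\pi_{n_1},\dots,\pi_{n_q})$ by the product structure of $\mu$.

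Consequently $\dist(x)$ is the convolution of $\dist(J_{n_j}x_{n_j})=\dist(x_{n_j})$, $j=1,\dots,q$, together with $\dist(y)$. Iterating Lemma~\ref{l2.1} gives
\[
\essran(x_{n_1})+\cdots+\essran(x_{n_q})+\essran(y)\subseteq \essran(x).
\]
Each $x_{n_j}$ is a non-zero function with mean zero on the finite space $\Omega_{n_j}$ under the uniform measure, so it is non-constant. Every point of $\Omega_{n_j}$ has positive mass, so $\essran(x_{n_j})$ is exactly its set of values and has at least two elements. The set $\essran(y)$ is non-empty, since the support of any Borel probability measure on $\mathbb R$ is non-empty. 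Fix one point $b\in\essran(y)$. Then $\essran(x)$ contains the translate by $b$ of the finite sumset $\essran(x_{n_1})+\cdots+\essran(x_{n_q})$. By Lemma~\ref{l2.2}, this sumset has at least $q+1$ elements.

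This gives $|\essran(x)|\geq q+1>q$, a contradiction. Hence every finite subset of $\supp_{\mathrm{ext}}(x)$ has at most $q-1$ elements, so $\supp_{\mathrm{ext}}(x)$ is finite with $|\supp_{\mathrm{ext}}(x)|\le q-1$.

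The only delicate step is the independence of the infinite tail $y$ from the finitely many chosen blocks. I handle it through measurability of $L^p$-limits with respect to the tail $\sigma$-algebra, as above. Everything else is a direct application of Lemmas~\ref{l2.1} and~\ref{l2.2}.
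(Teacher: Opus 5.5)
Your proof is correct and follows essentially the same route as the paper. You split $x=S_F+R_F$ with $R_F$ measurable with respect to $\sigma(\pi_n:n\notin F)$, use independence and Lemma~\ref{l2.1} to place a translate of the sumset $\essran(x_{n_1})+\cdots+\essran(x_{n_q})$ inside $\essran(x)$, and conclude with Lemma~\ref{l2.2}; the only cosmetic difference is that you argue by contradiction with $|F|=q$, whereas the paper bounds $|F|\le q-1$ directly for an arbitrary finite $F\subseteq\supp_{\mathrm{ext}}(x)$.
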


\begin{proof}
	If  $\supp_{\mathrm{ext}}(x) = \varnothing$, there is nothing to prove. Otherwise, let \(F \subseteq \supp_{\mathrm{ext}}(x)\) be a non-empty finite subset, and write \(F=\{n_1,\dots,n_m\}\). Define
	\[
	S_F := \sum_{j=1}^m J_{n_j}x_{n_j}, \qquad R_F := x-S_F.
	\]
	Note that the random variable \(S_F\) depends only on the exterior coordinates
	indexed by \(F\). On the other hand, by the canonical exterior decomposition,
	\[
	R_F=\sum_{n\notin F} J_n x_n
	\]
	with convergence in \(L^p(\Omega)\). Each partial sum of $R_F$ is measurable with respect to the $\sigma$-algebra $\mc{A}_{F}:=\sigma(\pi_n:n\notin F)$. Since $L^p\bigl(\Omega,\mc{A}_{F},\mu\bigr)$ is a closed subspace of \(L^p(\Omega)\), \(R_F\) is measurable with respect to \(\mc{A}_{F}\). Moreover, as the corresponding
	families of exterior coordinates are independent, the random variables
	\(S_F\) and \(R_F\) are independent. For every \(j \in \{1,\dots,m\}\), let
	\[
	A_j := \supp\bigl(\dist(x_{n_j})\bigr).
	\]
	Since \(x_{n_j}\) is defined on the finite probability space \(\Omega_{n_j}\), the set \(A_j\) is finite. Moreover, as \(x_{n_j} \neq 0\) and \(x_{n_j} \in H_{n_j}\), the random
	variable \(x_{n_j}\) is not constant, indeed, the only constant element
	of \(H_{n_j}\) is the zero function. Therefore, \(\lvert A_j\rvert \geq 2\). 
	The random variables $J_{n_1}x_{n_1}, \dots, J_{n_m}x_{n_m}$ are independent, since they depend on distinct exterior coordinates. Moreover, $\dist(J_{n_j}x_{n_j}) = \dist(x_{n_j})$ for every \(j\). So, by independence,
	\[
	\dist(S_F) = \dist(x_{n_1}) *\cdots *\dist(x_{n_m}).
	\]
	Applying Lemma~\ref{l2.1} repeatedly, we obtain
	\[
	A_1+\cdots+A_m\subseteq\supp\bigl(\dist(S_F)\bigr).
	\]
	Choose $b \in \supp\bigl(\dist(R_F)\bigr)$. Such a point exists because \(\dist(R_F)\) is a probability measure. Since \(S_F\) and \(R_F\) are independent,
	\[
	\dist(x)=\dist(S_F)*\dist(R_F).
	\]
	Another application of Lemma~\ref{l2.1} yields
	\[
	A_1+\cdots+A_m+b\subseteq\supp\bigl(\dist(x)\bigr)=\essran(x).
	\]
	We may therefore apply Lemma~\ref{l2.2}. Since
	\(\lvert A_j\rvert \geq 2\) for every \(j\), we obtain
	\[
	q \geq	\lvert \essran(x)\rvert\geq\lvert A_1+\cdots+A_m+b\rvert
	=\lvert A_1+\cdots+A_m\rvert\geq m+1.
	\]
	Thus, \(m\leq q-1\). Since \(F\) was an arbitrary finite subset of
	\(\supp_{\mathrm{ext}}(x)\), we conclude that $\bigl\lvert \supp_{\mathrm{ext}}(x) \bigr\rvert\leq q-1$.
\end{proof}

\subsection{Boolean rigidity in the unitalization}

We now enlarge \(X_\omega\) by adjoining the constant functions.

\begin{definition}
	The unitalization of \(X_\omega\) is the space
	\[
	\widehat{X}_\omega:=\mathbb{R}\mathbf{1}
	\oplus X_\omega \subseteq L^p(\Omega).
	\]
	Thus, every \(g \in \widehat{X}_\omega\) admits a unique representation
	\[
	g=	c\mathbf{1} +\sum_{n=1}^{\infty} J_n x_n,
	\]
	where \(c \in \mathbb{R}\), \(x_n \in H_n\), and the series converges
	in \(L^p(\Omega)\).
\end{definition}

The next result is the Boolean rigidity property that will drive the
classification of distributional embeddings.

\begin{theorem}[Boolean rigidity]\label{t2.1}
	Let \(A \in \Sigma\) be a measurable set and assume that
	\(\mathbf{1}_A \in \widehat{X}_\omega\). Then exactly one of the
	following alternatives holds:
	\begin{enumerate}
		\item[{\rm (i)}] \(A=\varnothing\) modulo null sets;
		
		\item[{\rm (ii)}] \(A=\Omega\) modulo null sets;
		
		\item[{\rm (iii)}] there exist a unique index \(n_0 \in \mathbb{N}\) and a
		unique non-trivial subset \(B \subseteq \Omega_{n_0}\) such that
		\[
		\mathbf{1}_A=J_{n_0}\mathbf{1}_B.
		\]
	\end{enumerate}
	Equivalently, every non-trivial indicator belonging to
	\(\widehat{X}_\omega\) depends on a unique exterior coordinate.
\end{theorem}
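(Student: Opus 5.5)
Write \(\mathbf 1_A=c\mathbf 1+x\) with \(c\in\mathbb R\) and \(x=\sum_n J_nx_n\in X_\omega\); this representation is unique because the decomposition of \(\widehat X_\omega\) is direct. Since \(\mathbf 1_A\) takes only the values \(0,1\), the function \(x=\mathbf 1_A-c\mathbf 1\) satisfies \(\essran(x)\subseteq\{-c,1-c\}\), so \(\lvert\essran(x)\rvert\leq 2\). Proposition~\ref{p2.2} with \(q=2\) then gives \(\lvert\supp_{\mathrm{ext}}(x)\rvert\leq 1\).

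If \(\supp_{\mathrm{ext}}(x)=\varnothing\), then \(x=0\) and \(\mathbf 1_A=c\mathbf 1\) almost everywhere. Hence \(c\in\{0,1\}\), which gives alternative (i) or (ii). Otherwise \(\supp_{\mathrm{ext}}(x)=\{n_0\}\) and \(\mathbf 1_A=c\mathbf 1+J_{n_0}x_{n_0}=J_{n_0}(c+x_{n_0})\) is \(\Sigma_{n_0}\)-measurable. Set \(g:=c+x_{n_0}\) on \(\Omega_{n_0}\). Since \(\mu_{n_0}\) gives positive mass to every point of the finite cube \(\Omega_{n_0}\) and \(\pi_{n_0}\) pushes \(\mu\) forward to \(\mu_{n_0}\), the equality \(J_{n_0}g\in\{0,1\}\) a.e. forces \(g\in\{0,1\}\) pointwise. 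Thus \(g=\mathbf 1_B\) with \(B=\{g=1\}\), and \(\mathbf 1_A=J_{n_0}\mathbf 1_B\). The set \(B\) is non-trivial: if \(B\) were \(\varnothing\) or \(\Omega_{n_0}\), then \(x_{n_0}=\mathbf 1_B-c\) would be constant and of mean zero, hence zero, contradicting \(x_{n_0}\neq0\).

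Exclusivity and uniqueness come from the uniqueness of the decomposition in \(\widehat X_\omega\). Suppose \(J_{n_0}\mathbf 1_B=J_{n_1}\mathbf 1_{B'}\) with \(B,B'\) non-trivial. Writing \(\mathbf 1_B=\mu_{n_0}(B)+(\mathbf 1_B-\mu_{n_0}(B))\), the second term lies in \(H_{n_0}\) and is non-zero, so the exterior support of \(\mathbf 1_A-\mu(A)\mathbf 1\) equals \(\{n_0\}\). This forces \(n_0=n_1\), and since \(J_{n_0}\) is injective, \(B=B'\). For the same reason, in case (iii) the \(X_\omega\)-component of \(\mathbf 1_A\) is non-zero, so \(\mathbf 1_A\) is not constant and cases (i), (ii) are excluded. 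Cases (i) and (ii) exclude each other because \(\mu(\Omega)=1\).

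There is no real obstacle, since the substantive work is already done in Proposition~\ref{p2.2}. The points that need care are checking \(\lvert\essran(x)\rvert\leq2\), including the degenerate case where \(A\) is null or conull so that \(x\) has only one value, and passing from an almost-everywhere statement on \(\Omega\) to a pointwise statement on the finite cube \(\Omega_{n_0}\), which uses that all atoms of \(\mu_{n_0}\) have positive mass.
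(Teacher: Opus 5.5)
Your proposal is correct and follows the paper's proof for existence: Proposition~\ref{p2.2} with $q=2$, the case split on the exterior support, and the passage from an almost-everywhere statement to a pointwise one on the atoms of $\Omega_{n_0}$. The one difference is in the uniqueness of $n_0$. The paper argues that $J_m\mathbf 1_{B_m}=J_n\mathbf 1_{B_n}$ with $m\neq n$ would make a non-trivial indicator independent of itself, forcing $a=a^2$. You instead read off $n_0$ as the exterior support of $\mathbf 1_A-\mu(A)\mathbf 1$, using uniqueness of the canonical exterior decomposition. Both arguments work, and yours also makes the mutual exclusivity of (i)--(iii) explicit.
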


\begin{proof}
	Write
	\[
	\mathbf{1}_A=c\mathbf{1}+\sum_{n=1}^{\infty} J_n x_n,
	\]
	where \(c \in \mathbb{R}\) and \(x_n \in H_n\). Since \(\mathbf{1}_A\) takes only the values \(0\) and \(1\), the
	function $\mathbf{1}_A-c\mathbf{1}$
	has finite essential range. Indeed,
	\[
	\essran\bigl(\mathbf{1}_A-c\mathbf{1}\bigr)
	\subseteq \{-c,1-c\}.
	\]
	By Proposition~\ref{p2.2}, at
	most one exterior component \(x_n\) can be non-zero. If \(x_n=0\) for every \(n\in\N\), then \(\mathbf{1}_A=c\mathbf{1}\). Since
	\(\mathbf{1}_A\) is an indicator, \(c \in \{0,1\}\). Hence
	\(A=\varnothing\) or \(A=\Omega\) modulo null sets. Assume now that there exists an index \(n_0 \in \mathbb{N}\) such that
	\(x_{n_0}\neq 0\). Then
	\[
	\mathbf{1}_A =c\mathbf{1}+J_{n_0}x_{n_0}.
	\]
	Define $h := c\mathbf 1+x_{n_0}$ as a function on \(\Omega_{n_0}\). Then $\mathbf 1_A = J_{n_0}h=h\circ \pi_{n_0}$ almost everywhere. Since every atom of \(\Omega_{n_0}\) has positive measure and \(J_{n_0}h\) is almost everywhere an indicator, we have $h(a)\in\{0,1\}$ for every $a\in\Omega_{n_0}$. Set 
	\[ 
	B := \{a\in\Omega_{n_0}:h(a)=1\}. 
	\] 
	Then $h = \mathbf 1_B$, and therefore $\mathbf 1_A = J_{n_0}\mathbf 1_B$ almost everywhere. The set \(B\) is non-trivial. Indeed, if \(B=\varnothing\) or \(B=\Omega_{n_0}\), then \(\mathbf 1_A\) would be constant, contradicting \(x_{n_0}\neq 0\).
	
	It remains to prove the uniqueness of \(n_0\). Suppose that there exist
	indices \(m\neq n\) and subsets \(B_m \subseteq \Omega_m\) and
	\(B_n \subseteq \Omega_n\) such that
	\[
	J_m\mathbf{1}_{B_m}=J_n\mathbf{1}_{B_n}
	\]
	almost everywhere, and assume that this common indicator is
	non-trivial. The random variables \(J_m\mathbf{1}_{B_m}\) and
	\(J_n\mathbf{1}_{B_n}\) are independent, because they depend on
	different exterior coordinates. Since they are equal almost
	everywhere, the common random variable is independent of itself. Let $a:=\mu_m(B_m)=\mu_n(B_n)$.
	Then
	\[
	a=	\mu\bigl(J_m\mathbf{1}_{B_m}=1\bigr).
	\]
	By independence and almost-everywhere equality,
	\[
	\begin{aligned}
		a=\mu\bigl(J_m\mathbf{1}_{B_m}=1,\, J_n\mathbf{1}_{B_n}=1\bigr)=a^2.
	\end{aligned}
	\]
	Hence \(a\in\{0,1\}\), which contradicts the non-triviality of the
	common indicator. Finally, once \(n_0\) is fixed, the subset \(B\) is unique because
	\(J_{n_0}\) is injective.
\end{proof}

\subsection{Distributional embeddings and finite probability factors}

We now introduce the class of operators that will be studied throughout
the paper.

\begin{definition}
	A linear operator \(T \colon X_\omega \to X_\omega\) is
	called a \emph{distributional embedding} if
	\[
	\dist(Tx)=\dist(x)\quad\text{for every } x \in X_\omega.
	\]
\end{definition}
Every distributional embedding is an isometry. Indeed, $\lVert Tx \rVert_{L^p(\Omega)}=\lVert x \rVert_{L^p(\Omega)}$ for every $x \in X_\omega$.
In particular, every distributional embedding is injective. The next observation allows us to work with indicators in the
unitalization.

\begin{lemma}\label{l2.3}
	Let \(T \colon X_\omega \to X_\omega\) be a distributional embedding. Define $\widehat{T}\colon\widehat{X}_\omega\to\widehat{X}_\omega$ by
	\[
	\widehat{T}(c\mathbf{1}+x):=c\mathbf{1}+Tx,
	\qquad	c \in \mathbb{R}, \quad x\in X_\omega.
	\]
	Then \(\widehat{T}\) is a well-defined linear operator and $\dist(\widehat{T}y)=\dist(y)$ for every $y \in \widehat{X}_\omega$.
\end{lemma}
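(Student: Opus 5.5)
Well-definedness and linearity of \(\widehat T\) come first. By the definition of \(\widehat X_\omega\), every \(y\in\widehat X_\omega\) has a unique representation \(y=c\mathbf 1+x\) with \(c\in\mathbb R\) and \(x\in X_\omega\). This holds because \(\mathbf 1\notin X_\omega\): every element of \(X_\omega\) has mean zero, being an \(L^p\)-limit of mean-zero functions, and expectation is continuous on \(L^p(\Omega)\) since \(\mu\) is a probability measure. So \(\widehat T\) is well defined. Linearity is immediate from the linearity of \(T\) and the uniqueness of the decomposition. We should also note that \(\widehat T\) maps into \(\widehat X_\omega\), since \(Tx\in X_\omega\).

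For the distributional identity, the key observation is that translation by a constant acts on distributions in a way determined by the distribution alone. If \(\tau_c\colon\mathbb R\to\mathbb R\) denotes \(t\mapsto t+c\), then for any random variable \(f\) we have
\[
\dist(c\mathbf 1+f)=(\tau_c)_{\#}\dist(f).
\]
This is a direct check on Borel sets: \(\mu(c+f\in B)=\mu(f\in B-c)\).

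Applying this with \(f=Tx\) and \(f=x\), together with the hypothesis \(\dist(Tx)=\dist(x)\), gives
\[
\dist(\widehat T y)=(\tau_c)_{\#}\dist(Tx)=(\tau_c)_{\#}\dist(x)=\dist(y).
\]

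There is no substantial obstacle here. The only point requiring care is the directness of the sum \(\mathbb R\mathbf 1\oplus X_\omega\), which rests on the continuity of expectation on \(L^p\).
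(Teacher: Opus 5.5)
Your proposal is correct and follows essentially the same route as the paper. The paper also gets directness of \(\mathbb R\mathbf 1\oplus X_\omega\) from the mean-zero property, and gets the distributional identity from the observation that translating by the constant \(c\) preserves equality of distributions. You simply make explicit two points the paper leaves implicit: the continuity of expectation on \(L^p\), and the pushforward \((\tau_c)_{\#}\).
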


\begin{proof}
	The decomposition $\widehat{X}_\omega=\mathbb{R}\mathbf{1}\oplus X_\omega$
	is direct, because every element of \(X_\omega\) has mean zero.
	Therefore, \(\widehat{T}\) is well defined and linear.
	
	Let \(y=c\mathbf{1}+x\), where \(c \in \mathbb{R}\) and
	\(x \in X_\omega\). Since \(T\) preserves distributions, we have
	\(\dist(Tx)=\dist(x)\). Translation by the
	constant \(c\) yields
	\[
	\dist(c\mathbf{1}+Tx)=\dist(c\mathbf{1}+x).
	\]
	Thus, \(\dist(\widehat{T}y)=\dist(y)\).
\end{proof}

We next study the restriction of a distributional embedding to a fixed
finite exterior block. Fix \(n \in \mathbb{N}\). For every atom \(a \in \Omega_n\), define
\[
u_a:= \mathbf{1}_{\{a\}}-2^{-n}\mathbf{1}\in H_n.
\]
We identify \(u_a\) with its canonical copy
\(J_nu_a \in J_n(H_n) \subseteq X_\omega\).

\begin{lemma}\label{l2.4}
	Let \(T \colon X_\omega \to X_\omega\) be a distributional
	embedding. For every \(n \in \mathbb{N}\) and every \(a \in \Omega_n\),
	the function
	\[
	b_a:=\widehat{T}\bigl(J_n\mathbf{1}_{\{a\}}\bigr)=
	T(J_nu_a)+2^{-n}\mathbf{1}
	\]
	is an indicator of measure \(2^{-n}\). Moreover,
	\[
	\sum_{a \in \Omega_n} b_a= \mathbf{1}.
	\]
	Consequently, there exist measurable sets \(B_a \subseteq \Omega\),
	indexed by \(a \in \Omega_n\), which form a measurable partition of
	\(\Omega\) modulo null sets and satisfy $b_a=\mathbf{1}_{B_a}$ and $\mu(B_a)=2^{-n}$.
\end{lemma}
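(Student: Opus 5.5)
The plan is to read everything off Lemma~\ref{l2.3} and linearity. First, $J_n\mathbf 1_{\{a\}} = J_nu_a + 2^{-n}\mathbf 1$ with $J_nu_a\in X_\omega$. Hence $J_n\mathbf 1_{\{a\}}$ lies in $\widehat X_\omega$, and by definition $\widehat T(J_n\mathbf 1_{\{a\}}) = T(J_nu_a)+2^{-n}\mathbf 1 = b_a$, which justifies the displayed identity. By Lemma~\ref{l2.3}, $\dist(b_a)=\dist(J_n\mathbf 1_{\{a\}})=(1-2^{-n})\delta_0+2^{-n}\delta_1$. Consequently $b_a\in\{0,1\}$ almost everywhere, so $b_a=\mathbf 1_{B_a}$ with $B_a:=\{b_a=1\}$ (after modification on a null set), and $\mu(B_a)=2^{-n}$.

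Next we sum over $a$. Since $\sum_{a\in\Omega_n}\mathbf 1_{\{a\}}=\mathbf 1$ on $\Omega_n$, we get $\sum_a u_a=\mathbf 1-2^n\cdot 2^{-n}\mathbf 1=0$ in $H_n$. Linearity of $T$ then gives $\sum_a T(J_nu_a)=T(0)=0$. Therefore
\[
\sum_{a\in\Omega_n}b_a=\sum_{a\in\Omega_n}T(J_nu_a)+2^n\cdot 2^{-n}\mathbf 1=\mathbf 1.
\]
Alternatively, one may apply $\widehat T$ to $\sum_a J_n\mathbf 1_{\{a\}}=\mathbf 1$ and use $\widehat T\mathbf 1=\mathbf 1$.

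Finally, we deduce the partition property. We have $2^n$ indicators whose sum is $\mathbf 1$ almost everywhere. At almost every point each $b_a(\omega)\in\{0,1\}$ and the values sum to $1$, so exactly one of them equals $1$. Hence the sets $B_a$ are pairwise disjoint modulo null sets and their union is $\Omega$ modulo null sets. This is consistent with $\sum_a\mu(B_a)=1$. One could also argue via $\mu(B_a\cap B_{a'})$: integrating the sum gives $\sum_a\mu(B_a)=1$, and pointwise the sum cannot exceed $1$.

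No step is a real obstacle. The only point needing care is that $\dist(b_a)$ determines that $b_a$ is an indicator only up to null sets, so the sets $B_a$ must be chosen as representatives and the partition holds modulo null sets.
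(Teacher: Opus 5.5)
Your proposal is correct and follows the paper's proof essentially step by step. Lemma~\ref{l2.3} identifies $\dist(b_a)$ with $2^{-n}\delta_1+(1-2^{-n})\delta_0$, linearity gives $\sum_{a}b_a=\mathbf 1$ (your observation $\sum_a u_a=0$ is the same computation as the paper's use of $\widehat T\mathbf 1=\mathbf 1$), and the partition property then follows pointwise almost everywhere, with your remark about choosing $B_a$ up to null sets making explicit what the paper leaves implicit.
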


\begin{proof}
	Since $J_n\mathbf{1}_{\{a\}}=J_nu_a+2^{-n}\mathbf{1}$,
	Lemma~\ref{l2.3} gives
	\[
	\dist(b_a)=\dist\bigl(J_n\mathbf{1}_{\{a\}}\bigr).
	\]
	The function \(J_n\mathbf{1}_{\{a\}}=\mathbf{1}_{\pi_{n}^{-1}(\{a\})}\) is an indicator of measure
	\(2^{-n}\). Therefore,
	\[
	\dist(b_a)=2^{-n}\delta_1+(1-2^{-n})\delta_0.
	\]
	Hence \(b_a\) is the indicator of a measurable set \(B_a \subseteq \Omega\) satisfying \(\mu(B_a)=2^{-n}\). 
	Since the singletons \(\{a\}\), \(a\in\Omega_n\), form a partition of \(\Omega_n\), we have
	\[
	\sum_{a\in\Omega_n}\mathbf 1_{\{a\}}=\mathbf 1
	\]
	as functions on \(\Omega_n\). By linearity of the coordinate embedding \(J_n\), and since \(J_n\mathbf 1=\mathbf 1\), it follows that
	\[
	\sum_{a\in\Omega_n}J_n\mathbf 1_{\{a\}}=
	J_n\left( \sum_{a\in\Omega_n}\mathbf 1_{\{a\}} \right)
	=J_n\mathbf 1=\mathbf 1.
	\]
	Since \(\widehat{T}\mathbf{1}=\mathbf{1}\), linearity again yields
	\[
	\sum_{a \in \Omega_n} b_a
	=
	\mathbf{1}.
	\]
	As each \(b_a\) is an indicator, the sets \(B_a\), indexed by \(a \in \Omega_n\), form a measurable partition of \(\Omega\) modulo null sets.
\end{proof}

The Boolean rigidity theorem forces the entire finite block \(J_n(H_n)\) to land inside a single exterior block.

\begin{proposition}\label{p2.3}
	Let \(T \colon X_\omega \to X_\omega\) be a distributional
	embedding. Then, for every \(n \in \mathbb{N}\), there exists a unique
	index \(\kappa(n) \in \mathbb{N}\) such that
	\begin{equation}
		\label{Eq1}
	T\bigl(J_n(H_n)\bigr) \subseteq J_{\kappa(n)}(H_{\kappa(n)}).
	\end{equation}
	Moreover, \(\kappa(n)\geq n\).
\end{proposition}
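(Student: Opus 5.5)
Fix \(n\in\mathbb N\) and consider the indicators \(b_a=\widehat T(J_n\mathbf 1_{\{a\}})=\mathbf 1_{B_a}\), \(a\in\Omega_n\), provided by Lemma~\ref{l2.4}. Each \(b_a\) belongs to \(\widehat X_\omega\), and \(0<\mu(B_a)=2^{-n}<1\) (for \(n\ge1\)), so each is a non-trivial indicator. By Theorem~\ref{t2.1}, for every \(a\in\Omega_n\) there exist a unique index \(k(a)\in\mathbb N\) and a unique non-trivial set \(C_a\subseteq\Omega_{k(a)}\) with \(\mathbf 1_{B_a}=J_{k(a)}\mathbf 1_{C_a}\).

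The first step is to show that \(k(a)\) does not depend on \(a\). Suppose \(a\neq a'\) and \(k(a)\neq k(a')\). The sets \(B_a\) and \(B_{a'}\) are disjoint, since they belong to a partition. On the other hand, \(J_{k(a)}\mathbf 1_{C_a}\) and \(J_{k(a')}\mathbf 1_{C_{a'}}\) depend on different exterior coordinates and are therefore independent. Hence
\[
\mu(B_a\cap B_{a'})=\mu(B_a)\mu(B_{a'})=4^{-n}>0,
\]
which is a contradiction. Thus \(k(a)=:\kappa(n)\) is common to all \(a\). Since the vectors \(u_a=\mathbf 1_{\{a\}}-2^{-n}\mathbf 1\) span \(H_n\), we obtain
\[
T(J_nu_a)=J_{\kappa(n)}\bigl(\mathbf 1_{C_a}-2^{-n}\mathbf 1\bigr),
\]
where \(\mathbf 1_{C_a}-2^{-n}\mathbf 1\in H_{\kappa(n)}\) because \(\mu_{\kappa(n)}(C_a)=2^{-n}\). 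Linearity then gives \eqref{Eq1}.

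For uniqueness, note that \(T\) is injective and \(H_n\neq\{0\}\), so \(T(J_n(H_n))\) contains a non-zero vector. Since \(J_k(H_k)\cap J_m(H_m)=\{0\}\) for \(k\neq m\) (apply \(\mathcal E_k\), which annihilates \(J_m(H_m)\)), at most one index can satisfy \eqref{Eq1}.

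Finally, for \(\kappa(n)\ge n\): the sets \(C_a\), \(a\in\Omega_n\), are \(2^n\) subsets of \(\Omega_{\kappa(n)}\) with \(\mu_{\kappa(n)}(C_a)=2^{-n}\). Each \(C_a\) is non-empty, so it contains an atom of mass \(2^{-\kappa(n)}\), and therefore \(2^{-n}\ge 2^{-\kappa(n)}\), i.e.\ \(\kappa(n)\ge n\). Equivalently, measure \(2^{-n}\) must be a multiple of \(2^{-\kappa(n)}\). Alternatively, the \(C_a\) are pairwise disjoint, since \(J_{\kappa(n)}\) is injective on indicators, so \(2^n\) disjoint non-empty sets fit into \(\Omega_{\kappa(n)}\), which has \(2^{\kappa(n)}\) points.

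The main point requiring care is the independence argument forcing a common target index; the remaining steps are bookkeeping.
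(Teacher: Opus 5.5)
Your proof is correct and follows the paper's argument for existence and for \(\kappa(n)\geq n\): Boolean rigidity is applied to the indicators \(b_a\), independence forces a common target index, and the atoms of \(\Omega_{\kappa(n)}\) are counted. The only divergence is uniqueness. You use injectivity of \(T\) together with \(J_k(H_k)\cap J_m(H_m)=\{0\}\) for \(k\neq m\), whereas the paper invokes the uniqueness clause of Theorem~\ref{t2.1}; your route is slightly more elementary, since it needs only the direct-sum structure of the blocks and not the indicator structure of \(b_a\).
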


\begin{proof}
	Fix \(n \in \mathbb{N}\). By Lemma~\ref{l2.4}, for every \(a \in \Omega_n\)
	there exists an indicator \(b_a=\mathbf{1}_{B_a} \in \widehat{X}_\omega\)
	of measure \(2^{-n}\), and the sets \(B_a\), indexed by
	\(a \in \Omega_n\), form a measurable partition of \(\Omega\).
	
	Since \(0<\mu(B_a)=2^{-n}<1\),
	Theorem~\ref{t2.1} implies that, for every
	\(a \in \Omega_n\), there exist an index \(k(a) \in \mathbb{N}\) and a
	non-trivial subset \(C_a \subseteq \Omega_{k(a)}\) such that
	\[
	\mathbf{1}_{B_a}= J_{k(a)}\mathbf{1}_{C_a}.
	\]
	
	We claim that \(k(a)\) does not depend on \(a\). Assume, to the
	contrary, that there exist distinct atoms \(a,a' \in \Omega_n\) such
	that \(k(a)\neq k(a')\). Then \(B_a\) and \(B_{a'}\) depend on different
	exterior coordinates and so, they are independent events. Since both
	have positive measure,
	\[
	\mu(B_a \cap B_{a'})=\mu(B_a)\mu(B_{a'})=2^{-2n}>0.
	\]
	This contradicts the fact that the sets \(B_a\), indexed by
	\(a \in \Omega_n\), form a partition.
	
	Therefore, there exists an index \(\kappa(n) \in \mathbb{N}\) such that $\mathbf{1}_{B_a}=J_{\kappa(n)}\mathbf{1}_{C_a}$ for every $a \in \Omega_n$,
	for suitable subsets \(C_a \subseteq \Omega_{\kappa(n)}\). Since
	\(\sum_{a \in \Omega_n}\mathbf{1}_{B_a}=\mathbf{1}\), the sets
	\(C_a\), indexed by \(a \in \Omega_n\), form a partition of
	\(\Omega_{\kappa(n)}\) modulo null sets. Since \(\Omega_{\kappa(n)}\) is finite and every atom has positive
	measure, this is an actual partition. Moreover, $\mu_{\kappa(n)}(C_a)
	=2^{-n}$ for every $a \in \Omega_n$.
	Every atom of \(\Omega_{\kappa(n)}\) has measure
	\(2^{-\kappa(n)}\). Therefore, \(2^{-n}\) must be an integer multiple of \(2^{-\kappa(n)}\). Equivalently,
	\(2^{\kappa(n)-n} \in \mathbb{N}\), and hence \(\kappa(n)\geq n\).
	
	Finally, the centered indicators
	\[
	u_a=\mathbf{1}_{\{a\}}-2^{-n}\mathbf{1},
	\qquad a \in \Omega_n,
	\]
	span \(H_n\). For every \(a \in \Omega_n\),
	\[
	\begin{aligned}
		T(J_nu_a)=
		\mathbf{1}_{B_a}-2^{-n}\mathbf{1}=
		J_{\kappa(n)}\bigl(\mathbf{1}_{C_a}-2^{-n}\mathbf{1}\bigr).
	\end{aligned}
	\]
	Thus, \eqref{Eq1} holds.
	
	It remains to prove the uniqueness of \(\kappa(n)\). Suppose that
	there exists an index \(\ell\in\mathbb N\) such that $T\bigl(J_n(H_n)\bigr) \subseteq J_\ell(H_\ell)$. Fix \(a\in\Omega_n\). Then $T(J_nu_a) \in J_\ell(H_\ell)$,
	and hence
	\[
	b_a=T(J_nu_a)+2^{-n}\mathbf 1
	\]
	is a non-trivial indicator depending only on the exterior coordinate
	\(\ell\). On the other hand, $b_a=J_{\kappa(n)}\mathbf 1_{C_a}$.
	The uniqueness statement in
	Theorem~\ref{t2.1} therefore yields $\ell=\kappa(n)$.
	This proves the uniqueness of \(\kappa(n)\).
\end{proof}

The preceding proof yields a concrete description of the restriction
of \(T\) to each finite exterior block.

\begin{corollary}\label{c2.1}
	Let \(T \colon X_\omega \to X_\omega\) be a distributional
	embedding. For every \(n \in \mathbb{N}\), there exists a
	measure-preserving map $\varphi_n\colon\Omega_{\kappa(n)} \to\Omega_n$
	such that
	\[
	T(J_nf)=J_{\kappa(n)}(f \circ \varphi_n) \qquad \text{for every } f \in H_n.
	\]
	Equivalently, the restriction of \(T\) to the block \(J_n(H_n)\) is
	the pullback induced by a finite probability factor.
\end{corollary}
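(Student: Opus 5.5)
The plan is to read off $\varphi_n$ directly from the partition $\{C_a\}_{a\in\Omega_n}$ of $\Omega_{\kappa(n)}$ constructed in the proof of Proposition~\ref{p2.3}.

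\textbf{Step 1: definition of $\varphi_n$.} Recall from that proof that, for every $a\in\Omega_n$,
\[
T(J_nu_a)=J_{\kappa(n)}\bigl(\mathbf 1_{C_a}-2^{-n}\mathbf 1\bigr),
\qquad \mu_{\kappa(n)}(C_a)=2^{-n},
\]
and that the sets $C_a$, $a\in\Omega_n$, form a genuine partition of $\Omega_{\kappa(n)}$. Define $\varphi_n\colon\Omega_{\kappa(n)}\to\Omega_n$ by
\[
\varphi_n(\omega)=a \quad\text{if and only if}\quad \omega\in C_a .
\]
This is well defined and everywhere defined precisely because the $C_a$ form a partition. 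It is measure preserving because
\[
\mu_{\kappa(n)}\bigl(\varphi_n^{-1}(\{a\})\bigr)=\mu_{\kappa(n)}(C_a)=2^{-n}=\mu_n(\{a\})
\]
for every atom $a$, and hence $(\varphi_n)_\#\mu_{\kappa(n)}=\mu_n$ by additivity.

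\textbf{Step 2: the formula on a spanning set.} By construction $\mathbf 1_{\{a\}}\circ\varphi_n=\mathbf 1_{C_a}$ and $\mathbf 1\circ\varphi_n=\mathbf 1$. Therefore
\[
u_a\circ\varphi_n=\mathbf 1_{C_a}-2^{-n}\mathbf 1,
\]
and so $T(J_nu_a)=J_{\kappa(n)}(u_a\circ\varphi_n)$ for every $a\in\Omega_n$.

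\textbf{Step 3: extension to all of $H_n$.} Composition with $\varphi_n$ is linear. Since $\varphi_n$ is measure preserving, $\int (f\circ\varphi_n)\,d\mu_{\kappa(n)}=\int f\,d\mu_n=0$ for $f\in H_n$, so $f\mapsto f\circ\varphi_n$ maps $H_n$ into $H_{\kappa(n)}$. Thus both $f\mapsto T(J_nf)$ and $f\mapsto J_{\kappa(n)}(f\circ\varphi_n)$ are linear maps on $H_n$. They agree on the vectors $u_a$, $a\in\Omega_n$, which span $H_n$: indeed, for $f\in H_n$ we have
\[
f=\sum_a f(a)\mathbf 1_{\{a\}}=\sum_a f(a)u_a+2^{-n}\Bigl(\sum_a f(a)\Bigr)\mathbf 1=\sum_a f(a)u_a,
\]
since $\sum_a f(a)=2^n\int f\,d\mu_n=0$. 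Hence the two maps coincide on $H_n$, which gives the stated identity.

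None of these steps poses a real obstacle; everything is bookkeeping on top of Proposition~\ref{p2.3}. The one point needing care is that the $C_a$ form an actual partition rather than one modulo null sets, which is what makes $\varphi_n$ defined everywhere. This holds because every atom of $\Omega_{\kappa(n)}$ has positive measure, as already noted in the proof of Proposition~\ref{p2.3}.
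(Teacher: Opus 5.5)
Your proposal is correct and follows the paper's proof essentially step for step. You define $\varphi_n$ from the genuine partition $(C_a)_{a\in\Omega_n}$ of Proposition~\ref{p2.3}, check that $u_a\circ\varphi_n=\mathbf 1_{C_a}-2^{-n}\mathbf 1$, and extend by linearity from the spanning family $(u_a)$; your explicit verifications that the $u_a$ span $H_n$ and that pullback maps $H_n$ into $H_{\kappa(n)}$ only fill in details the paper leaves implicit.
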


\begin{proof}
	For each \(a \in \Omega_n\), let
	\(C_a \subseteq \Omega_{\kappa(n)}\) be the set obtained in the proof
	of Proposition~\ref{p2.3}. The sets \(C_a\), indexed by \(a \in \Omega_n\), form a partition of \(\Omega_{\kappa(n)}\), and $\mu_{\kappa(n)}(C_a)=2^{-n}$.
	Define $\varphi_n\colon\Omega_{\kappa(n)}\to\Omega_n$ by setting \(\varphi_n(\omega)=a\) whenever \(\omega \in C_a\). Then \(\varphi_n^{-1}(\{a\})=C_a\), and hence
	\[
	\mu_{\kappa(n)}\bigl(\varphi_n^{-1}(\{a\})\bigr)=2^{-n}=\mu_n(\{a\}).
	\]
	
	Since the singletons generate the \(\sigma\)-algebra of \(\Omega_n\),
	the map \(\varphi_n\) preserves the uniform probability measure. For every \(a \in \Omega_n\), we have
	\[
	\begin{aligned}
		u_a\circ\varphi_n=\mathbf 1_{\varphi_n^{-1}(\{a\})}
		-2^{-n}\mathbf 1=\mathbf 1_{C_a}-2^{-n}\mathbf 1.
	\end{aligned}
	\]
	Therefore, by Proposition~\ref{p2.3},
	\[
	T(J_nu_a)=J_{\kappa(n)}(u_a\circ\varphi_n).
	\]
	Since the vectors \(u_a\), indexed by \(a \in \Omega_n\), span \(H_n\),
	the identity extends by linearity to every \(f \in H_n\).
\end{proof}

\subsection{Compatible packings inside a target block}

Let \(T \colon X_\omega \to X_\omega\) be a distributional
embedding. Recall that by Proposition~\ref{p2.3},
there exists a map $\kappa \colon \mathbb{N} \to \mathbb{N}$
such that
\[
T\bigl(J_n(H_n)\bigr)\subseteq J_{\kappa(n)}(H_{\kappa(n)})
\]
for every \(n \in \mathbb{N}\). Moreover, by Corollary~\ref{c2.1}, for every
\(n \in \mathbb{N}\) there exists a measure-preserving map $\varphi_n\colon \Omega_{\kappa(n)}\to\Omega_n$
such that
\[
T(J_nf) =J_{\kappa(n)}(f \circ \varphi_n)
\quad \text{for every } f \in H_n.
\]
For every \(k \in \mathbb{N}\), define the fibre
\[
I_k:=\kappa^{-1}(\{k\})=\{n \in \mathbb{N} : \kappa(n)=k\}.
\]

Several source blocks may be mapped into the same target block. We now
prove that the corresponding finite probability factors must be
jointly independent. In the sequel, we use the convention that the
product indexed by the empty set is the one-point probability space.

\begin{lemma}\label{l2.5}
	Fix \(k \in \mathbb{N}\), and let \(F \subseteq I_k\) be finite. Then
	the family of maps \((\varphi_n)_{n \in F}\) is jointly independent. Equivalently, the map
	\[
	\Phi_{k,F}\colon\Omega_k\longrightarrow\prod_{n \in F} \Omega_n,\qquad\Phi_{k,F}(\omega):=\bigl(\varphi_n(\omega)\bigr)_{n \in F},
	\]
	preserves the product probability measure.
\end{lemma}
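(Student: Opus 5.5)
The idea is to apply the distributional invariance of \(\widehat T\) (Lemma~\ref{l2.3}) to a product of indicators, turned into a sum. An additive statistic cannot see joint probabilities directly, but the probability that all of \(m\) events occur can be read off as the measure of the level set \(\{\text{sum}=m\}\). Since \(\widehat T\) preserves distributions of linear combinations, this is enough.

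Fix \(k\), a finite \(F=\{n_1,\dots,n_m\}\subseteq I_k\), and atoms \(a_j\in\Omega_{n_j}\). We must show
\[
\mu_k\Bigl(\bigcap_{j=1}^m \varphi_{n_j}^{-1}(\{a_j\})\Bigr)=\prod_{j=1}^m 2^{-n_j}.
\]
\begin{enumerate}
\item Consider the function \(g:=\sum_{j=1}^m J_{n_j}\mathbf 1_{\{a_j\}}\in\widehat X_\omega\), which takes values in \(\{0,\dots,m\}\).
\item Since the \(n_j\) are distinct exterior coordinates, the events \(\pi_{n_j}^{-1}(\{a_j\})\) are independent. Hence
\[
\mu(g=m)=\prod_{j=1}^m 2^{-n_j}.
\]
\item By Lemma~\ref{l2.4} and Corollary~\ref{c2.1}, \(\widehat T(J_{n_j}\mathbf 1_{\{a_j\}})=J_k\mathbf 1_{C_{a_j}}\) with \(C_{a_j}=\varphi_{n_j}^{-1}(\{a_j\})\). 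By linearity,
\[
\widehat T g=J_k\Bigl(\sum_{j=1}^m\mathbf 1_{C_{a_j}}\Bigr).
\]
\item Lemma~\ref{l2.3} gives \(\dist(\widehat Tg)=\dist(g)\). A sum of \(m\) indicators equals \(m\) exactly on the intersection, so
\[
\mu_k\Bigl(\bigcap_j C_{a_j}\Bigr)=\mu(\widehat Tg=m)=\mu(g=m)=\prod_j 2^{-n_j},
\]
which is exactly the required identity.
\end{enumerate}

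Because the singletons of \(\prod_{n\in F}\Omega_n\) generate its \(\sigma\)-algebra and the space is finite, this says \(\Phi_{k,F}\) pushes \(\mu_k\) forward to the product measure. That is the same as joint independence of \((\varphi_n)_{n\in F}\). The case \(F=\varnothing\) is trivial under the stated convention.

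The only delicate point is the computation in the target in step 3. The identification \(\widehat T(J_n\mathbf 1_{\{a\}})=\mathbf 1_{B_a}=J_k\mathbf 1_{C_a}\) must hold with every \(n_j\) landing in the same coordinate \(k\), so that the sum is a function on \(\Omega_k\). This is guaranteed by the assumption \(F\subseteq I_k\) together with Proposition~\ref{p2.3}.

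As a byproduct, joint independence and measure preservation force \(\sum_{n\in F}2^{n}\)-many distinct product atoms, each of mass \(2^{-\sum n}\geq 2^{-k}\). This gives \(\sum_{n\in F}n\le k\), the capacity bound announced in the introduction, though the lemma itself does not require it.
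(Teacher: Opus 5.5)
Your proof is correct, but it reaches the key identity by a different and more elementary route than the paper.

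The paper applies distribution preservation to arbitrary linear combinations $\sum_j t_j T(J_{n_j}f_j)$ with centered $f_j\in H_{n_j}$. It then invokes the Cram\'er--Wold theorem to upgrade equality of all one-dimensional projections to equality of the joint laws of $(f_j\circ\varphi_{n_j})_j$ on $\Omega_k$ and of $(f_j)_j$ on $\prod_j\Omega_{n_j}$. Only afterwards does it specialize to centered indicators $f_j=\mathbf 1_{A_j}-\mu_{n_j}(A_j)\mathbf 1$ and read off the probability of the intersection from the joint law.

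You instead use a single scalar statistic for each tuple of atoms. Since $g=\sum_j J_{n_j}\mathbf 1_{\{a_j\}}$ attains its maximal value $m$ exactly on the intersection, the one identity $\dist(\widehat Tg)=\dist(g)$ already gives $\mu_k\bigl(\bigcap_j\varphi_{n_j}^{-1}(\{a_j\})\bigr)=\prod_j 2^{-n_j}$. Finiteness of the product space then yields measure preservation of $\Phi_{k,F}$, and independence for arbitrary events $A_j$ follows by summing over atoms.

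What each approach buys:
\begin{itemize}
\item Your extremal-level-set argument avoids Cram\'er--Wold entirely. It only uses the indicator description of $\widehat T$ on atoms, which Lemma~\ref{l2.4} and Proposition~\ref{p2.3} already provide.
\item The paper's route produces, as an intermediate step, equality of full joint laws for arbitrary $f_j$. This looks more general, but on these finite spaces it is equivalent to the measure preservation you prove.
\end{itemize}

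One slip in your closing aside: $\prod_{n\in F}\Omega_n$ has $2^{\sum_{n\in F}n}$ atoms, not $\sum_{n\in F}2^n$. The bound $\sum_{n\in F}n\le k$ comes from the fact that each atom has mass $2^{-\sum_{n\in F}n}$ and a non-empty preimage in $\Omega_k$. That preimage therefore has mass at least $2^{-k}$, which is the argument of Proposition~\ref{p2.4}. This does not affect the proof of the lemma.
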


\begin{proof}
	If \(F=\varnothing\), the conclusion is immediate. Assume that $F=\{n_1,\dots,n_r\}$
	with \(r\in\N\). For every \(j \in \{1,\dots,r\}\), choose an arbitrary function \(f_j \in H_{n_j}\). The source random variables $J_{n_1}f_1,\dots,J_{n_r}f_r$ are independent, because they depend on distinct exterior coordinates.
	Since \(T\) is linear and preserves distributions, for every choice of
	scalars \(t_1,\dots,t_r \in \mathbb{R}\) we have
	\[
	\dist\left(\sum_{j=1}^r t_j T(J_{n_j}f_j)\right)
	=\dist\left(\sum_{j=1}^r t_j J_{n_j}f_j\right).
	\]
	Since \(\kappa(n_j)=k\) for every \(j\), Corollary~\ref{c2.1}
	gives $T(J_{n_j}f_j)=J_k(f_j \circ \varphi_{n_j})$.
	Therefore,
	\[
	\dist\left(\sum_{j=1}^r t_j f_j \circ \varphi_{n_j}\right)
	=\dist\left(\sum_{j=1}^r t_j f_j\right),
	\]
	where the sum on the right-hand side is interpreted on the product
	probability space $\prod_{j=1}^r\Omega_{n_j}$. By the Cramér--Wold theorem (see, for instance, \cite[Cr. 6.5]{K21}), the random vector $\bigl(f_1\circ\varphi_{n_1},\dots,f_r\circ\varphi_{n_r}\bigr)$
	on \(\Omega_k\) has the same distribution as the random vector $(f_1,\dots,f_r)$
	on the product probability space $\prod_{j=1}^r \Omega_{n_j}$. Now choose arbitrary subsets \(A_j \subseteq \Omega_{n_j}\), for \(j=1,\dots,r\), and set
	\[
	f_j:=\mathbf{1}_{A_j}-\mu_{n_j}(A_j)\mathbf{1}.
	\]
	Then \(f_j \in H_{n_j}\). The equality of joint distributions implies
	\[
	\mu_k\left(\bigcap_{j=1}^r\varphi_{n_j}^{-1}(A_j)\right)
	=\prod_{j=1}^r\mu_{n_j}(A_j).
	\]
	Indeed, the event $\left\{f_j \circ \varphi_{n_j}=1-\mu_{n_j}(A_j)\right\}$
	coincides with \(\varphi_{n_j}^{-1}(A_j)\), and similarly for the source variables. Thus, the sub-\(\sigma\)-algebras $\varphi_n^{-1}\bigl(\mathcal{P}(\Omega_n)\bigr)$, $n \in F$, are jointly independent. Equivalently, \(\Phi_{k,F}\) preserves the product probability measure.
\end{proof}

The finite-dimensional structure of the cubes now gives a sharp
packing restriction.

\begin{proposition}\label{p2.4}
	For every \(k \in \mathbb{N}\), the fibre \(I_k=\kappa^{-1}(\{k\})\)
	is finite and satisfies
	\[
	\sum_{n \in I_k} n\leq k.
	\]
\end{proposition}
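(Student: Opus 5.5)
We argue by a counting argument on atoms, using Lemma~\ref{l2.5}. Fix \(k\in\mathbb N\) and let \(F\subseteq I_k\) be an arbitrary finite subset. Lemma~\ref{l2.5} says that
\[
\Phi_{k,F}\colon\Omega_k\to\prod_{n\in F}\Omega_n
\]
pushes the uniform measure \(\mu_k\) forward to the product measure. The target is the uniform measure on a finite set with \(2^{\sum_{n\in F}n}\) points, each of mass \(2^{-\sum_{n\in F}n}\).

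Since \(\Omega_k\) is finite and every atom has mass \(2^{-k}\), the preimage of each point of the target is a subset of \(\Omega_k\). Its measure is therefore an integer multiple of \(2^{-k}\). That measure must equal \(2^{-\sum_{n\in F}n}>0\), so each preimage is non-empty. Comparing cardinalities, \(\Phi_{k,F}\) is surjective from a set of \(2^k\) points onto one of \(2^{\sum_{n\in F}n}\) points. Equivalently, \(2^{k-\sum_{n\in F}n}\) is a positive integer. Either way we obtain
\[
\sum_{n\in F}n\leq k.
\]

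It remains to pass from finite \(F\) to the whole fibre. Every element of \(I_k\) is a positive integer, so the bound above forces every finite subset \(F\subseteq I_k\) to have at most \(k\) elements. Hence \(I_k\) is finite. Taking \(F=I_k\) then gives \(\sum_{n\in I_k}n\leq k\).

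The main point needing care is positivity of the masses of the product atoms, since that is what yields surjectivity. It is immediate here because the product measure of every singleton is strictly positive.
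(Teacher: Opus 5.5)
Your proof is correct and follows the paper's argument essentially verbatim. Lemma~\ref{l2.5} makes $\Phi_{k,F}$ measure-preserving, positivity of the target atoms forces surjectivity onto $2^{\sum_{n\in F}n}$ points, hence $\sum_{n\in F}n\leq k$ and so $|F|\leq k$ for every finite $F\subseteq I_k$, which makes $I_k$ finite and lets you take $F=I_k$; your remark that $2^{k-\sum_{n\in F}n}$ must be a positive integer is an equally valid way to reach the same inequality.
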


\begin{proof}
	Let \(F \subseteq I_k\) be finite. By
	Lemma~\ref{l2.5}, the map
	\[
	\Phi_{k,F}\colon\Omega_k\longrightarrow\prod_{n \in F} \Omega_n
	\]
	preserves the product probability measure. The domain \(\Omega_k=\{0,1\}^k\) has cardinality \(2^k\), whereas the
	target space has cardinality
	\[
	\left\lvert\prod_{n \in F} \Omega_n
	\right\rvert=\prod_{n \in F} 2^n=2^{\sum_{n \in F} n}.
	\]
	Every atom of the target product space has measure
	\(2^{-\sum_{n \in F}n}\). Since \(\Phi_{k,F}\) preserves the product
	probability measure, the preimage of every target atom has positive
	measure and is therefore non-empty. Hence \(\Phi_{k,F}\) is
	surjective. Consequently,
	\[
	2^{\sum_{n \in F} n}\leq2^k,
	\]
	and therefore $\sum_{n \in F} n\leq k$.
	
	This inequality holds for every finite subset \(F \subseteq I_k\).
	Since every \(n \in I_k\) satisfies \(n \geq 1\), the fibre \(I_k\)
	cannot contain more than \(k\) elements. Hence \(I_k\) is finite.
	Taking \(F=I_k\), we obtain $\sum_{n \in I_k} n\leq k$.
	The proof is complete.
\end{proof}

For every \(k \in \mathbb{N}\), the preceding proposition allows us to
define the joint factor
\[
\Phi_k\colon\Omega_k
\longrightarrow\prod_{n \in I_k} \Omega_n, \qquad \Phi_k(\omega) :=\bigl(\varphi_n(\omega)\bigr)_{n \in I_k}.
\]

\begin{corollary}\label{c2.2}
	For every \(k \in \mathbb{N}\), the map $\Phi_k$ preserves the product probability measure. Moreover, $\sum_{n \in I_k} n\leq k$.
\end{corollary}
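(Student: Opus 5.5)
This corollary is a direct repackaging of Lemma~\ref{l2.5} and Proposition~\ref{p2.4}, so the plan is simply to check that the finite-subset statements apply to the whole fibre.

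\emph{Step 1: the capacity bound.} Proposition~\ref{p2.4} already states that the fibre \(I_k=\kappa^{-1}(\{k\})\) is finite and that \(\sum_{n\in I_k}n\leq k\). I will quote it directly for the second assertion.

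\emph{Step 2: measure preservation.} Since \(I_k\) is finite, I can take \(F=I_k\) in Lemma~\ref{l2.5}. By definition, \(\Phi_k=\Phi_{k,I_k}\), so the lemma gives that \(\Phi_k\) preserves the product probability measure on \(\prod_{n\in I_k}\Omega_n\).

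\emph{Degenerate case.} If \(I_k=\varnothing\), I use the convention stated before Lemma~\ref{l2.5}: the empty product is the one-point probability space. Then \(\Phi_k\) is the constant map, which is trivially measure-preserving, and the sum over \(I_k\) is \(0\leq k\).

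There is no real obstacle here. The only point needing care is the finiteness of \(I_k\), which is exactly what lets Lemma~\ref{l2.5} be applied with \(F=I_k\) rather than only to finite subsets of \(I_k\); Proposition~\ref{p2.4} supplies it.
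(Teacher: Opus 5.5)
Your proposal is correct and follows the paper's proof exactly: quote Proposition~\ref{p2.4} for the finiteness of \(I_k\) and the bound \(\sum_{n\in I_k}n\leq k\), then apply Lemma~\ref{l2.5} with \(F=I_k\). Your separate handling of the case \(I_k=\varnothing\) via the one-point-space convention is a harmless extra check; Lemma~\ref{l2.5} already covers that case.
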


\begin{proof}
	By Proposition~\ref{p2.4}, the fibre \(I_k\) is finite and satisfies $\sum_{n\in I_k}n \leq k$.
	Applying Lemma~\ref{l2.5} with \(F=I_k\), we conclude that \(\Phi_k\) preserves the product probability measure.
\end{proof}

\section{Classification of distributional self-embeddings of \(X_\omega\)}

We now assemble the preceding finite-dimensional reductions into a
complete description of the distributional self-embeddings of
\(X_\omega=R_\omega^{p,0}\).

\begin{theorem}\label{t3.1}
	Let \(T \colon X_\omega \to X_\omega\) be a bounded linear
	operator. Then, \(T\) is a distributional embedding if and only if there exist
		\begin{enumerate}
			\item[{\rm (i)}] a map \(\kappa \colon \mathbb{N} \to \mathbb{N}\) such that, for every \(k \in \mathbb{N}\), the fibre $I_k:=\kappa^{-1}(\{k\})$ is finite,
			
			\item[{\rm (ii)}]  for every \(k \in \mathbb{N}\), a measure-preserving map
			\[
			\Phi_k\colon\Omega_k\longrightarrow\prod_{n \in I_k}\Omega_n,
			\]
		\end{enumerate}
		such that $\sum_{n \in I_k} n\leq k$ for every  $k \in \mathbb{N}$, and
		\begin{equation}
		\label{Eq9}
		T\left(\sum_{n=1}^{N} J_n f_n\right)=\sum_{k=1}^{\infty}
		J_k\left(\sum_{n \in I_k \cap \{1,\dots,N\}}f_n\circ\pr_n
		\circ\Phi_k\right)
		\end{equation}
		for every \(N \in \mathbb{N}\) and every finite family
		\(f_1,\dots,f_N\) with \(f_n \in H_n\), where 
		for each \(n\in I_k\), we denote by $\pr_n:
		\prod_{m\in I_k}\Omega_m\to\Omega_n$
		the canonical coordinate projection.
		In the formula above, only finitely many terms are non-zero.
\end{theorem}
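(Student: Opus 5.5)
The proof splits into the two implications. Necessity is essentially assembled from Section~2, while sufficiency requires a direct verification that the packing formula preserves distributions.

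\emph{Necessity.} Let \(T\) be a distributional embedding. Proposition~\ref{p2.3} provides the map \(\kappa\), and Proposition~\ref{p2.4} shows that each fibre \(I_k\) is finite with \(\sum_{n\in I_k}n\le k\). Corollary~\ref{c2.2} shows that the joint factor \(\Phi_k=(\varphi_n)_{n\in I_k}\) is measure preserving. Since \(\varphi_n=\pr_n\circ\Phi_k\) for \(n\in I_k\), Corollary~\ref{c2.1} gives
\[
T(J_nf_n)=J_{\kappa(n)}\bigl(f_n\circ\pr_n\circ\Phi_{\kappa(n)}\bigr).
\]
Summing over \(n\le N\) by linearity, and grouping the terms according to the value \(k=\kappa(n)\), yields \eqref{Eq9}. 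Only the finitely many \(k\in\kappa(\{1,\dots,N\})\) contribute, so the sum on the right-hand side is finite.

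\emph{Sufficiency.} Suppose we are given packing data \(\kappa\) and \((\Phi_k)\) as in (i)–(ii), and that \(T\) is bounded and satisfies \eqref{Eq9}. First I would check that \(T\) maps finite sums into \(X_\omega\). Each \(f_n\circ\pr_n\circ\Phi_k\) has mean zero, because \(\pr_n\circ\Phi_k\) pushes \(\mu_k\) forward to \(\mu_n\). Hence each term lies in \(H_k\), and the right-hand side is a finite sum in the span of the blocks \(J_k(H_k)\).

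Next, fix \(x=\sum_{n\le N}J_nf_n\) and let \(K=\kappa(\{1,\dots,N\})\). The summands \(J_k(g_k)\), \(k\in K\), with
\[
g_k=\sum_{n\in I_k\cap\{1,\dots,N\}}f_n\circ\pr_n\circ\Phi_k,
\]
are independent because they depend on distinct exterior coordinates. Therefore
\[
\dist(Tx)=\mathop{\ast}_{k\in K}\dist(g_k).
\]
Since \(\Phi_k\) pushes \(\mu_k\) forward to the product measure on \(\prod_{m\in I_k}\Omega_m\), the random vector \((f_n\circ\pr_n\circ\Phi_k)_{n\in I_k\cap\{1,\dots,N\}}\) has independent coordinates, and each coordinate has the distribution of \(f_n\). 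Consequently \(\dist(g_k)=\ast_{n\in I_k\cap\{1,\dots,N\}}\dist(f_n)\). The sets \(I_k\), \(k\in K\), partition \(\{1,\dots,N\}\), so combining the two convolutions gives
\[
\dist(Tx)=\mathop{\ast}_{n\le N}\dist(f_n)=\dist(x).
\]

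The step I expect to need the most care is passing from finite sums to all of \(X_\omega\). For a general \(x\in X_\omega\), the partial sums \(\mc P_Nx\) converge to \(x\) in \(L^p\) by the canonical exterior decomposition. Since \(T\) is bounded, \(T\mc P_Nx\to Tx\) in \(L^p\), and \(L^p\) convergence implies convergence in distribution. Thus \(\dist(Tx)\) is the weak limit of \(\dist(T\mc P_Nx)=\dist(\mc P_Nx)\), which is \(\dist(x)\).

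The hypothesis that \(T\) is bounded is what makes this last step work. One can also note, as a consistency check, that boundedness is automatic here: the identity \(\lVert Tx\rVert_p=\lVert x\rVert_p\) holds on the dense subspace of finite sums, so \(T\) is in any case determined by \eqref{Eq9}.
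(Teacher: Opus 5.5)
Your proposal is correct and follows the paper's proof essentially step by step. Necessity is assembled from Propositions~\ref{p2.3}, \ref{p2.4} and Corollaries~\ref{c2.1}, \ref{c2.2}. Sufficiency checks distribution preservation on finite exterior sums and then passes to the closure using boundedness. Your convolution bookkeeping is equivalent to the paper's comparison of the independent families $(J_kg_k)_{k\in K_N}$ and $\bigl(\sum_{n\in I_k\cap\{1,\dots,N\}}J_nf_n\bigr)_{k\in K_N}$. Your use of convergence in distribution replaces the paper's a.e.\ subsequence and dominated convergence argument.

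One correction to your closing remark, which the proof does not use: boundedness is not automatic for an arbitrary linear $T$ satisfying \eqref{Eq9} on finite sums, since $T$ can be altered on an algebraic complement of the span of the blocks to become unbounded. What is true is that \eqref{Eq9} has a unique bounded, indeed isometric, extension. That is why the boundedness hypothesis appears in the statement.
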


\begin{proof}
	Assume first that \(T\) is a distributional embedding. This implication is a direct consequence of the preceding results. By Proposition~\ref{p2.3}, for every \(n \in \mathbb{N}\) there exists a unique index \(\kappa(n) \in \mathbb{N}\) such that
	\[
	T\bigl(J_n(H_n)\bigr)\subseteq J_{\kappa(n)}(H_{\kappa(n)}).
	\]
	By Corollary~\ref{c2.1}, for every \(n \in \mathbb{N}\)
	there exists a measure-preserving map $\varphi_n\colon\Omega_{\kappa(n)}\to\Omega_n$
	such that $T(J_nf)=J_{\kappa(n)}(f\circ\varphi_n)$ for every $f\in H_n$. For every \(k \in \mathbb{N}\), defining
	\(I_k=\kappa^{-1}(\{k\})\), by Proposition~\ref{p2.4}, the set \(I_k\) is finite
	and satisfies $\sum_{n \in I_k} n\leq k$. By Corollary~\ref{c2.2}, the map
	\[
	\Phi_k\colon\Omega_k \longrightarrow\prod_{n \in I_k}\Omega_n, \qquad \Phi_k:=(\varphi_n)_{n \in I_k},
	\]
	preserves the product probability measure. Note that, in particular, $\varphi_n =
	\pr_n\circ\Phi_k$ whenever $n\in I_k$.
	Now, let $x=\sum_{n=1}^{N} J_n f_n$, $f_n \in H_n$.
	Using linearity and the preceding description of the restrictions of
	\(T\), we obtain
	\[
	\begin{aligned}
		Tx&=\sum_{n=1}^{N} T(J_nf_n)
		=\sum_{n=1}^{N}J_{\kappa(n)}(f_n\circ\varphi_n)=
		\sum_{k=1}^{\infty}J_k\left(\sum_{n \in I_k \cap \{1,\dots,N\}}
		f_n\circ\pr_n
		\circ\Phi_k\right).
	\end{aligned}
	\]
	This proves the necessity of the stated representation.

	Conversely, assume that there exist data $\kappa$, $(I_k)_{k \in \mathbb{N}}$ and $(\Phi_k)_{k \in \mathbb{N}}$ satisfying the conditions in the statement and that \(T\) satisfies the stated formula \eqref{Eq9} on finite exterior sums. We first prove that \(T\) preserves distributions on
	\[
	X_\omega^{\mathrm{alg}}:=\operatorname{span}\{J_n(H_n):n\in\mathbb N\}.
	\]
	Let $x=\sum_{n=1}^{N}J_nf_n$, $f_n\in H_n$. For every \(k\in\mathbb N\), define
	\[
	g_k:=\sum_{n\in I_k\cap\{1,\dots,N\}}
	f_n	\circ\pr_n\circ \Phi_k.
	\]
	Only finitely many functions \(g_k\) are non-zero. Moreover, since
	\(\Phi_k\) preserves the product probability measure,
	\[
	\begin{aligned}
		\int_{\Omega_k}g_k\,d\mu_k=\sum_{n\in I_k\cap\{1,\dots,N\}}\int_{\Omega_k}f_n\circ\pr_n\circ\Phi_k \,d\mu_k=\sum_{n\in I_k\cap\{1,\dots,N\}}
		\int_{\Omega_n}f_n\,d\mu_n=0.
	\end{aligned}
	\]
	Thus, $g_k\in H_k$. Set $K_N:=\left\{k\in\mathbb N:I_k\cap\{1,\dots,N\}\neq\varnothing\right\}$.
	Note that $K_N=\kappa(\{1,\dots,N\})$, and hence \(K_N\) is finite. For every \(k\in K_N\), define
	\[
	Y_k:=J_kg_k, \quad 
	Z_k:=\sum_{n\in I_k\cap\{1,\dots,N\}}J_nf_n.
	\]
	
	Since \(\Phi_k\) preserves the product probability measure, the random
	variable \(g_k\) has the same distribution as
	\begin{equation}
	\label{Eq10}
	\sum_{n\in I_k\cap\{1,\dots,N\}}f_n
	\end{equation}
	on the corresponding product probability space. On the other hand,
	the variables $J_nf_n$, $n\in I_k\cap\{1,\dots,N\}$,
	depend on distinct exterior coordinates and are therefore independent.
	Since coordinate liftings do not alter distributions, the random
	variable $Z_k$
	has the same distribution as \eqref{Eq10}. Hence,
	$\dist(Y_k)=\dist(Z_k)$ for every $k\in K_N$.
	
	The family \((Y_k)_{k\in K_N}\) is independent, because its members
	depend on distinct exterior coordinates. The family
	\((Z_k)_{k\in K_N}\) is also independent, because the fibres
	\((I_k)_{k\in K_N}\) are pairwise disjoint. Therefore,
	\[
	\dist\bigl((Y_k)_{k\in K_N}\bigr)=\dist\bigl((Z_k)_{k\in K_N}\bigr).
	\]
	Applying the summation map, we obtain for every $x\in X_\omega^{\mathrm{alg}}$,
	\[ 
	\dist(Tx) = \dist\left(\sum_{k\in K_N}Y_k\right)=\dist\left(\sum_{k\in K_N}Z_k\right)=\dist(x).
	\]
	Finally, let \(x\in X_\omega\), and choose a sequence $(x^{(m)})_{m=1}^{\infty}
	\subseteq X_\omega^{\mathrm{alg}}$ such that $x^{(m)}\to x$ in \(L^p(\Omega)\). Since \(T\) is bounded, $Tx^{(m)}\to Tx$	in \(L^p(\Omega)\). Passing to a common subsequence if necessary, we may assume that $x^{(m)} \to x$ and $Tx^{(m)}\to Tx$ almost everywhere. Let $\psi:\mathbb R\to\mathbb R$ be bounded and continuous. By the dominated convergence theorem,
	\[
	\begin{aligned}
		\int_\Omega\psi(Tx)\,d\mu=\lim_{m\to\infty}
		\int_\Omega\psi(Tx^{(m)})\,d\mu=\lim_{m\to\infty}\int_\Omega\psi(x^{(m)})\,d\mu=\int_\Omega\psi(x)\,d\mu.
	\end{aligned}
	\]
	Since bounded continuous functions determine Borel probability
	measures on \(\mathbb R\), we conclude that $\dist(Tx)=\dist(x)$. Thus, \(T\) is a distributional embedding.
\end{proof}

The finite-sum representation \eqref{Eq9} in Theorem~\ref{t3.1} extends canonically to arbitrary elements of \(X_\omega\).

\begin{corollary}\label{c3.1}
	Let \(T:X_\omega\to X_\omega\) be a distributional embedding, and let $x=\sum_{n=1}^{\infty}J_nf_n\in X_\omega$
	be its canonical exterior decomposition. Then
	\[
	Tx=\sum_{k=1}^{\infty}J_k\left(\sum_{n\in I_k}f_n\circ\pr_n
	\circ\Phi_k\right),
	\]
	where each inner sum is finite and the outer series converges in
	\(L^p(\Omega)\).
\end{corollary}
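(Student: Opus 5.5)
The plan is to extend the finite-sum formula \eqref{Eq9} to the whole space by continuity of \(T\) along the partial sums of the exterior decomposition. Let \(x=\sum_{n=1}^{\infty}J_nf_n\) and set \(x^{(N)}:=\mc P_Nx=\sum_{n=1}^{N}J_nf_n\). By the canonical exterior decomposition proposition, \(x^{(N)}\to x\) in \(L^p(\Omega)\). Since \(T\) is an isometry, \(Tx^{(N)}\to Tx\). By Theorem~\ref{t3.1},
\[
Tx^{(N)}=\sum_{k=1}^{\infty}J_k g_k^{(N)},\qquad g_k^{(N)}:=\sum_{n\in I_k\cap\{1,\dots,N\}}f_n\circ\pr_n\circ\Phi_k\in H_k .
\]

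Next I would show that \(Tx\) has exterior components \(g_k:=\sum_{n\in I_k}f_n\circ\pr_n\circ\Phi_k\). This sum is finite, because \(I_k\) is finite by Proposition~\ref{p2.4}. Fix \(k\). Since \(I_k\) is finite, \(g_k^{(N)}=g_k\) for all \(N\geq\max I_k\); if \(I_k=\varnothing\), both are zero.

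Now apply the contractive operator \(\mc E_k\), which is continuous on \(L^p\). By \eqref{Eq4}, applied to the finite sum \(Tx^{(N)}\), we have \(\mc E_k(Tx^{(N)})=J_kg_k^{(N)}\). Letting \(N\to\infty\) gives \(\mc E_k(Tx)=J_kg_k\). Since \(Tx\in X_\omega\), the canonical decomposition of \(Tx\) has \(k\)-th component characterized by \(J_k(Tx)_k=\mc E_k(Tx)\), as in the proof of that proposition. Hence \((Tx)_k=g_k\).

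Finally, the same proposition gives \(Tx=\sum_k J_k(Tx)_k\) with \(L^p\) convergence, which is exactly the claimed formula. The only delicate point is making sure the outer series converges in the stated order, that is, as the limit of \(\mc P_K(Tx)\). This is precisely what the exterior decomposition proposition provides. So I expect no real obstacle: the key observation is identifying components through \(\mc E_k\) rather than attempting to rearrange the double series directly.
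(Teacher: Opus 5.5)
Your proposal is correct and follows essentially the same route as the paper: pass to the partial sums $x^{(N)}$, use continuity of $T$ and of $\mathcal E_k$ together with the finiteness of $I_k$ to get $\mathcal E_k(Tx)=J_k\bigl(\sum_{n\in I_k}f_n\circ\pr_n\circ\Phi_k\bigr)$, and then invoke the canonical exterior decomposition of $Tx$. Your write-up is slightly more explicit about the details. It notes that $g_k^{(N)}\in H_k$, that only finitely many $g_k^{(N)}$ are non-zero so \eqref{Eq4} applies, and that $g_k^{(N)}$ stabilizes once $N\geq\max I_k$.
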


\begin{proof}
	Let $x^{(N)}=\sum_{n=1}^{N}J_nf_n$. Since \(x^{(N)}\to x\) in \(L^p(\Omega)\) and \(T\) is an isometry, $Tx^{(N)}
	\to Tx$ in \(L^p(\Omega)\). Fix \(k\in\mathbb N\). By continuity of the	conditional expectation, we obtain
	\[
	\begin{aligned}
		\mathcal E_k(Tx)=\lim_{N\to\infty}\mathcal E_k(Tx^{(N)})=J_k\left(\sum_{n\in I_k}
		f_n \circ \pr_n \circ \Phi_k\right),
	\end{aligned}
	\]
	because \(I_k\) is finite. The conclusion follows from the canonical
	exterior decomposition of \(Tx\).
\end{proof}

\subsection{Some consequences} The classification becomes particularly rigid for surjective distributional embeddings as the following result shows.

\begin{corollary}\label{c3.2}
	Let $T:X_\omega\to X_\omega$ be a surjective distributional embedding. Then $\kappa(n)=n$ for every $n\in\mathbb N$. Equivalently, $I_k=\{k\}$ for every $k\in\mathbb N$. Moreover,
	\[
	T\bigl(J_k(H_k)\bigr)=J_k(H_k)\quad \text{for every }k\in\mathbb N.
	\]
	In particular, $J_k(H_k)$, $k\in\N$, are invariant subspaces of $T$.
\end{corollary}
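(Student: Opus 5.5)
We must show $\kappa(n)=n$ for every $n$ when $T$ is surjective. The plan is to use surjectivity to show that every target block $J_k(H_k)$ is hit by some source block, and then combine the capacity constraint of Proposition~\ref{p2.4} with $\kappa(n)\ge n$ from Proposition~\ref{p2.3}.

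\emph{Step 1: every fibre is non-empty.} Fix $k\in\mathbb N$ and a non-zero $h\in H_k$, and write $J_kh=Tx$ with $x=\sum_n J_nf_n$. By Corollary~\ref{c3.1}, the $k$-th exterior component of $Tx$ is $J_k\bigl(\sum_{n\in I_k}f_n\circ\pr_n\circ\Phi_k\bigr)$. If $I_k=\varnothing$ this component vanishes. By uniqueness of the canonical exterior decomposition it must equal $J_kh\neq 0$. Hence $I_k\neq\varnothing$ for every $k$.

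\emph{Step 2: induction on $k$.} We prove by strong induction that $I_k=\{k\}$.

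Assume $I_j=\{j\}$ for all $j<k$. Then every $n<k$ satisfies $\kappa(n)=n\neq k$, so $I_k\subseteq\{k,k+1,\dots\}$. Proposition~\ref{p2.4} gives $\sum_{n\in I_k}n\le k$. Since $I_k$ is non-empty, it contains some $n\ge k$, which forces $I_k=\{k\}$.

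The base case $k=1$ is the same argument with no hypothesis: $I_1$ is non-empty, and $\sum_{n\in I_1}n\le 1$ forces $I_1=\{1\}$.

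Since the fibres partition $\mathbb N$, we conclude $\kappa(n)=n$ for all $n$.

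\emph{Step 3: the blocks are invariant.} Now $T(J_k(H_k))\subseteq J_k(H_k)$. By Corollary~\ref{c2.1}, $T(J_kf)=J_k(f\circ\varphi_k)$, where $\varphi_k\colon\Omega_k\to\Omega_k$ is measure-preserving. Since $\Omega_k$ carries the uniform measure, $\varphi_k$ is a bijection. Alternatively, the restriction of $T$ is an injective linear map of the finite-dimensional space $J_k(H_k)$ into itself, hence onto. This gives $T(J_k(H_k))=J_k(H_k)$, and invariance is immediate.

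\emph{Main obstacle.} The only delicate point is Step 1: one must make sure the $k$-th component of $Tx$ really comes only from the sources in $I_k$. This is exactly what the convergent representation in Corollary~\ref{c3.1} provides, together with uniqueness of the exterior decomposition.
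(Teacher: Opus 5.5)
Your proof is correct, but it takes a different route from the paper.

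The paper treats one target block at a time. It shows that the block map $\bigoplus_{n\in I_k}H_n\to H_k$, $(f_n)_{n\in I_k}\mapsto\sum_{n\in I_k}f_n\circ\pr_n\circ\Phi_k$, is bijective. Surjectivity comes from the surjectivity of $T$ via Corollary~\ref{c3.1}, exactly as in your Step~1. Injectivity comes from the injectivity of $T$ and uniqueness of the exterior decomposition. This gives the dimension identity $\sum_{n\in I_k}(2^n-1)=2^k-1$. The paper combines it with the packing bound $\sum_{n\in I_k}n\le k$ and the strict inequality $\sum_{n\in I_k}(2^n-1)<2^{\sum_{n\in I_k}n}-1$ for $|I_k|\ge 2$, forcing $I_k=\{n\}$ and then $n=k$.

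You use only the non-emptiness of the fibres and the triangular structure of $\kappa$, bootstrapping upward from $k=1$. Your Step~2 uses the packing bound of Proposition~\ref{p2.4}. The inequality $\kappa(n)\ge n$ from Proposition~\ref{p2.3} would serve equally well: the argument is then the purely combinatorial fact that any surjection $\kappa\colon\mathbb N\to\mathbb N$ with $\kappa(n)\ge n$ is the identity. So you need neither the injectivity of the block map nor any dimension count.

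In exchange, the paper's argument is local in $k$: it never appeals to the lower fibres. It also records the slightly more informative fact that each block map is a linear isomorphism. Yours is shorter and more elementary.
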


\begin{proof}
	For every \(k\in\mathbb N\), define
	\[
	A_k:\bigoplus_{n\in I_k}H_n \longrightarrow H_k, \quad A_k\bigl((f_n)_{n\in I_k}\bigr)=\sum_{n\in I_k}f_n\circ\pr_n\circ \Phi_k.
	\]
	We claim that \(A_k\) is bijective. To prove surjectivity, let \(g\in H_k\). Since \(T\) is surjective, there exists $x=\sum_{n=1}^{\infty}J_nf_n\in X_\omega$ such that $Tx=J_kg$.
	By Corollary~\ref{c3.1},
	\[
	J_kg=\mc{E}_k(Tx)=J_k\left(\sum_{n\in I_k}f_n\circ\pr_n\circ\Phi_k\right).
	\]
	Since \(J_k\) is injective, $g=A_k\bigl((f_n)_{n\in I_k}\bigr)$.
	Thus, \(A_k\) is surjective. As a consequence, since \(H_k\neq\{0\}\), the fibre \(I_k\)
	is non-empty. To prove injectivity, assume that $A_k\bigl((f_n)_{n\in I_k}\bigr)=0$. Then
	\[
	T\left(\sum_{n\in I_k}J_nf_n\right)=0.
	\]
	Since \(T\) is an isometry, it is injective. Hence, $\sum_{n\in I_k}J_nf_n=0$. By uniqueness of the canonical exterior decomposition, $f_n=0$ for every $n\in I_k$. Therefore, \(A_k\) is injective.
	
	Since $\dim H_n=2^n-1$, the bijectivity of \(A_k\) gives
	\begin{equation}
		\label{Eq11}
	\sum_{n\in I_k}(2^n-1)=2^k-1.
	\end{equation}
	On the other hand, Theorem~\ref{t3.1} yields $\sum_{n\in I_k}n\leq k$. If \(I_k\) contained at least two elements, then
	\[
	\sum_{n\in I_k}(2^n-1)<2^{\sum_{n\in I_k}n}-1
	\leq 2^k-1,
	\]
	which would contradict the identity \eqref{Eq11}. Thus,
	\(I_k\) contains exactly one element, say $I_k=\{n\}$. The dimension count now yields
	\[
	2^n-1=\dim H_n=\dim H_k=2^k-1,
	\]
	and hence $n=k$. Therefore, $I_k=\{k\}$ for every $k\in\mathbb N$ and, in consequence, $\kappa(k)=k$. 
	
	Finally, Corollary~\ref{c2.1} gives a measure-preserving map $\varphi_k:\Omega_k\to\Omega_k$ such that $T(J_kf)=J_k(f\circ\varphi_k)$ for every $f\in H_k$.
	Since \(\Omega_k\) is a finite uniform probability space, for every \(a\in\Omega_k\) we have \[ \mu_k\bigl(\varphi_k^{-1}(\{a\})\bigr) = \mu_k(\{a\}) = 2^{-k}. \] As every atom of \(\Omega_k\) has measure \(2^{-k}\), each fibre \(\varphi_k^{-1}(\{a\})\) contains exactly one point. Hence, \(\varphi_k\) is bijective. Therefore, the pullback map $f \mapsto f\circ\varphi_k$ is a bijection of \(H_k\) onto itself, and consequently, $T\bigl(J_k(H_k)\bigr)=J_k(H_k)$.
\end{proof}

Note that the converse implication in Theorem~\ref{t3.1} also provides a direct construction of distributional embeddings from admissible packing data. Indeed, let $\kappa: \mathbb N \to \mathbb N$ be a map such that, for every \(k\in\mathbb N\), the fibre  $I_k := \kappa^{-1}(\{k\})$ is finite. Assume that $\sum_{n\in I_k}n \leq k$ for every $k\in\mathbb N$, and that, for every \(k\in\mathbb N\), there exists a measure-preserving map $\Phi_k: \Omega_k \to \prod_{n\in I_k}\Omega_n$. Then there exists a unique distributional embedding $T: X_\omega \to X_\omega$ such that 
\[
T \left( \sum_{n=1}^{N}J_nf_n \right) = \sum_{k=1}^{\infty} J_k \left( \sum_{n\in I_k\cap\{1,\dots,N\}} f_n \circ \pr_n \circ \Phi_k \right)
\] for every finite exterior sum.  

This construction admits a particularly explicit family of examples.

\begin{proposition}\label{p3.1}
	Let $\mathbb N=\bigsqcup_{j=1}^{\infty}F_j$
	be a partition of \(\mathbb N\) into finite non-empty subsets. For
	every \(j\in\mathbb N\), let \(m_j\in\mathbb N\) be such that the
	indices \(m_j\) are pairwise distinct and
	\[
	\sum_{n\in F_j}n\leq m_j.
	\]
	For every fixed \(j\in\mathbb N\), choose a family $(G_n)_{n\in F_j}$ of pairwise disjoint subsets of \(\{1,\dots,m_j\}\) such that $|G_n|=n$ for every $n\in F_j$\footnote{Such a family exists because $\sum_{n\in F_j}n\leq m_j$. Indeed, after enumerating \(F_j=\{n_1,\dots,n_r\}\), one may choose consecutive disjoint blocks of coordinates of lengths \(n_1,\dots,n_r\) inside \(\{1,\dots,m_j\}\).}. Writing $G_n=\{i_1^{(n)}<\cdots<i_n^{(n)}\}$,
	define
	\[
	\varphi_n:\Omega_{m_j}\longrightarrow\Omega_n, \quad \varphi_n(\o_1,\dots,\o_{m_j}):=(\o_{i_1^{(n)}},\dots,\o_{i_n^{(n)}}).
	\]
	Then there exists a distributional embedding $T:X_\omega\to X_\omega$ such that
	\[
	T(J_nf)=J_{m_j}(f\circ\varphi_n)
	\]
	for every $n\in F_j$ and $f\in H_n$.
\end{proposition}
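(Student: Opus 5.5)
The plan is to deduce Proposition~\ref{p3.1} from the converse direction of Theorem~\ref{t3.1}, together with the remark following Corollary~\ref{c3.2}. For this we need admissible packing data. Then the induced operator must be shown to be bounded on the whole of \(X_\omega\), because Theorem~\ref{t3.1} is stated for a bounded linear operator already given.

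First, the packing data. Define \(\kappa\colon\mathbb N\to\mathbb N\) by \(\kappa(n):=m_j\) whenever \(n\in F_j\); this is well defined because the \(F_j\) partition \(\mathbb N\). Since the \(m_j\) are pairwise distinct, the fibres are \(I_{m_j}=F_j\), which is finite, and \(I_k=\varnothing\) when \(k\notin\{m_j\}\). The capacity condition \(\sum_{n\in I_k}n\le k\) then holds: it is the hypothesis for \(k=m_j\) and is trivial for empty fibres. For \(k=m_j\), set \(\Phi_k:=(\varphi_n)_{n\in F_j}\colon\Omega_{m_j}\to\prod_{n\in F_j}\Omega_n\). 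For \(k\) with empty fibre, \(\Phi_k\) is the map onto the one-point space.

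Second, we check that \(\Phi_{m_j}\) is measure preserving. The map \(\Phi_{m_j}\) factors as the coordinate restriction \(\Omega_{m_j}=\{0,1\}^{m_j}\to\{0,1\}^{\bigsqcup_{n\in F_j}G_n}\), followed by a relabelling bijection onto \(\prod_{n\in F_j}\{0,1\}^n\). Take a target atom \((a^{(n)})_{n\in F_j}\). Its preimage consists of the points whose coordinates on \(\bigsqcup G_n\) are prescribed and whose remaining \(m_j-\sum_{n\in F_j}n\) coordinates are free. It therefore has cardinality \(2^{m_j-\sum n}\) and measure \(2^{-\sum n}\), which equals the product measure of the atom. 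The disjointness of the \(G_n\) is exactly what makes this count work. In addition, \(\pr_n\circ\Phi_{m_j}=\varphi_n\).

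Third, we define \(T\) on \(X_\omega^{\mathrm{alg}}\) by formula \eqref{Eq9}. It is well defined because the exterior decomposition is unique. The computation in the converse part of the proof of Theorem~\ref{t3.1} uses only the packing data and the formula on finite sums, not boundedness. It therefore shows that \(\dist(Tx)=\dist(x)\) on \(X_\omega^{\mathrm{alg}}\), and in particular \(\|Tx\|_p=\|x\|_p\). Hence \(T\) extends uniquely to a linear isometry \(X_\omega\to X_\omega\); the range lies in \(X_\omega\) since each \(J_k g_k\in J_k(H_k)\) and \(X_\omega\) is closed. Theorem~\ref{t3.1}, or its final approximation step, then gives \(\dist(Tx)=\dist(x)\) for all \(x\). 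Finally, for \(n\in F_j\) and \(f\in H_n\), formula \eqref{Eq9} applied with \(f_n=f\) and all other \(f_m=0\) gives \(T(J_nf)=J_{m_j}(f\circ\pr_n\circ\Phi_{m_j})=J_{m_j}(f\circ\varphi_n)\).

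The only step needing genuine care is the second, the measure-preservation count. The slight subtlety in the third step is to note explicitly that boundedness is obtained from the isometric property on the dense subspace, rather than assumed.
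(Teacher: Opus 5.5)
Your proposal is correct and follows the paper's proof: the same $\kappa$ (so $I_{m_j}=F_j$ and all other fibres are empty), the coordinate-projection factors $\Phi_{m_j}=(\varphi_n)_{n\in F_j}$, the trivial factors on empty fibres, and then an appeal to the converse construction in Theorem~\ref{t3.1}. Your atom count and your observation that boundedness follows from the isometry on $X_\omega^{\mathrm{alg}}$ (Theorem~\ref{t3.1} assumes boundedness in its hypothesis) only make explicit what the paper compresses into ``the construction presented in the proof of Theorem~\ref{t3.1}.''
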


\begin{proof}
	Define $\kappa(n)=m_j$ whenever $n\in F_j$.
	Then $I_{m_j}=F_j$, whereas \(I_k=\varnothing\) if $k
	\notin\{m_j:j\in\mathbb N\}$. Consequently, $\sum_{n\in I_{m_j}}n = \sum_{n\in F_j}n \leq m_j$ and $\sum_{n\in I_k}n = 0 \leq k$ for every remaining index \(k\). 
	Now, for every \(j\in\mathbb N\), define
	\[
	\Phi_{m_j}:\Omega_{m_j}\longrightarrow\prod_{n\in F_j}\Omega_n, \quad 
	\Phi_{m_j}:=	(\varphi_n)_{n\in F_j}.
	\]
	Since the subsets \(G_n\), \(n\in F_j\), are pairwise disjoint, \(\Phi_{m_j}\) is a coordinate projection and preserves the product probability measure. For every remaining index \(k\), let $\Phi_k:\Omega_k\to\prod_{n\in I_k}\Omega_n$
	be the unique map into the one-point probability space. The conclusion follows from the construction presented in the proof of Theorem~\ref{t3.1}.
\end{proof}

Let us give a particular example. For every \(j\in\mathbb N\), set $F_j	=\{2j-1,2j\}$ and $m_j=4j-1$. Then
	\[
	\sum_{n\in F_j}n=4j-1=m_j.
	\]
	Hence, Proposition~\ref{p3.1} yields a
	distributional embedding that packs the two source blocks $H_{2j-1}$, $H_{2j}$ into the single target block $H_{4j-1}$.
	More explicitly, one may define
	\[
	\varphi_{2j-1}(\o_1,\dots,\o_{4j-1})=(\o_1,\dots,\o_{2j-1}), \quad 
	\varphi_{2j}(\o_1,\dots,\o_{4j-1})=(\o_{2j},\dots,\o_{4j-1}).
	\]
	This shows that distributional self-embeddings of \(X_\omega\) need
	not act blockwise, distinct source blocks may be packed together into
	a larger target block.
	
	\section{Linear isometries of \(R_\omega^p\)}
	
	We now apply the preceding classification to the study of linear isometries of the non-centered first limit Bourgain--Rosenthal--Schechtman space \(R_\omega^p\). In our concrete realization, we have
	\[
	R_\omega^p=\mathbb R\mathbf 1\oplus R_\omega^{p,0}=\mathbb R\mathbf 1\oplus X_\omega.
	\]
	Accordingly, throughout this section we identify \(R_\omega^p\) with
	the unitalization
	\[
	\widehat X_\omega:=\mathbb R\mathbf 1\oplus X_\omega.
	\]
	The first step is to prove that, whenever \(p\neq2\), every linear isometry of \(R_\omega^p\) maps the constant function \(\mathbf 1\)	either to \(\mathbf 1\) or to \(-\mathbf 1\). For \(p\notin2\mathbb N\), Rudin's equimeasurability theorem then reduces the description of arbitrary linear isometric embeddings to the classification obtained in Theorem~\ref{t3.1}. For the larger range	\(p\neq2\), surjectivity allows us to determine the group of surjective linear isometries directly.
	
	We begin by extending the notion of exterior support to the
	unitalization. If
	\[
	f=c\mathbf 1+\sum_{n=1}^{\infty}J_nf_n \in \widehat X_\omega, \quad f_n\in H_n,
	\]
	we set $\supp_{\mathrm{ext}}(f):=\{n\in\mathbb N:f_n\neq0\}$.

	We first recall a metric characterization of disjointness in
	\(L^p\). It is a standard consequence of the scalar Clarkson inequalities and their equality cases.

	\begin{lemma}\label{l5.1}
		Assume that $1<p<\infty$, $p\neq2$. For any two real-valued functions \(f,g\in L^p(\Omega)\), the following
		statements are equivalent:
		\begin{enumerate}
			\item[{\rm (i)}] $fg=0$ almost everywhere.
			
			\item[{\rm (ii)}] $\|f+g\|_{L^p(\Omega)}^p+\|f-g\|_{L^p(\Omega)}^p=2\|f\|_{L^p(\Omega)}^p+2\|g\|_{L^p(\Omega)}^p$.
		\end{enumerate}
		Consequently, every linear isometry $U:\widehat X_\omega\to\widehat X_\omega$ preserves disjointness.
	\end{lemma}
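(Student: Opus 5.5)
The plan is to reduce the statement to a pointwise scalar inequality and analyse its equality case. For real numbers \(a,b\) define
\[
\Delta_p(a,b):=|a+b|^p+|a-b|^p-2|a|^p-2|b|^p .
\]
Integrating over \(\Omega\), condition (ii) says exactly that \(\int_\Omega \Delta_p(f,g)\,d\mu=0\). The implication (i)\(\Rightarrow\)(ii) is immediate. If \(fg=0\) almost everywhere, then at almost every point one of \(a=f(\omega)\), \(b=g(\omega)\) vanishes, and then \(\Delta_p(a,b)=0\) trivially. So the whole content lies in the converse.

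For (ii)\(\Rightarrow\)(i), I will use the scalar Clarkson-type inequalities. For \(p>2\) one has \(\Delta_p(a,b)\geq 0\) for all real \(a,b\); for \(1<p<2\) one has \(\Delta_p(a,b)\leq 0\). In either case \(\Delta_p\) has constant sign, so \(\int\Delta_p(f,g)\,d\mu=0\) forces \(\Delta_p(f,g)=0\) almost everywhere. It therefore suffices to show that, for \(p\neq2\), \(\Delta_p(a,b)=0\) implies \(ab=0\).

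By homogeneity and the symmetries \(a\leftrightarrow b\), \(b\mapsto -b\), it is enough to treat \(a=1\) and \(t=b\in(0,1]\), and to study
\[
h(t)=(1+t)^p+(1-t)^p-2-2t^p .
\]
The sign and strictness can be obtained as follows. Expand \((1+t)^p+(1-t)^p-2=2\sum_{k\geq1}\binom{p}{2k}t^{2k}\) and compare it with \(2t^p\). An alternative route is to use \(h(0)=0\) and analyse \(h'(t)=p\bigl[(1+t)^{p-1}-(1-t)^{p-1}-2t^{p-1}\bigr]\). The function \(\psi(t)=(1+t)^{p-1}-(1-t)^{p-1}\) is concave on \([0,1]\) when \(p>2\) and convex when \(p<2\), and \(\psi\) agrees with \(2t^{p-1}\) only at \(t=0\) (and at \(t=1\) in the borderline case). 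This gives a strict sign for \(h'\) on \((0,1)\), hence \(h(t)\neq0\) for \(t\in(0,1]\). The endpoint \(t=1\) can be checked directly: \(h(1)=2^p-4\neq0\) for \(p\neq2\).

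This strict equality-case analysis, giving the strict sign of \(h\) on \((0,1]\) uniformly for both ranges of \(p\), is the step I expect to be the main obstacle. I would first try the derivative/concavity argument above. If that does not close cleanly, I would fall back on the classical statement of the equality case of Clarkson's inequality for two scalars.

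The "consequently" part is then formal. A linear isometry \(U\) of \(\widehat X_\omega\) preserves the four norms appearing in (ii), since \(U(f\pm g)=Uf\pm Ug\). Hence (ii) holds for \((f,g)\) if and only if it holds for \((Uf,Ug)\). Applying the equivalence at both ends shows that \(fg=0\) implies \(Uf\cdot Ug=0\) almost everywhere.
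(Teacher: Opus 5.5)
Your proposal follows the same route as the paper. The paper uses the pointwise comparison of $|a+b|^p+|a-b|^p$ with $2|a|^p+2|b|^p$: this has one fixed sign for $p>2$ and the opposite fixed sign for $1<p<2$, with equality exactly when $ab=0$. It integrates using the constant sign of the integrand, and then notes that a linear isometry preserves all four norms in (ii). The paper simply quotes the scalar equality case as standard (Clarkson), so your fallback is exactly what it does.

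One correction to your self-contained sketch: $\psi(t)=(1+t)^{p-1}-(1-t)^{p-1}$ is \emph{not} concave for $p>3$. There $\psi''(t)=(p-1)(p-2)\bigl[(1+t)^{p-3}-(1-t)^{p-3}\bigr]>0$ on $(0,1)$, so the concavity/convexity comparison with $2t^{p-1}$ only settles $1<p<2$ and $2<p\le 3$.

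All cases follow at once from strict superadditivity of $x\mapsto x^{q}$ on $(0,\infty)$ for $q=p-1>1$:
\begin{itemize}
\item $(1+t)^{q}>1+t^{q}$ and $(1-t)^{q}\le 1-t^{q}$ for $t\in(0,1]$, so $h'(t)/p=\psi(t)-2t^{q}>0$;
\item for $0<q<1$ both inequalities reverse, so $h'<0$.
\end{itemize}
Since $h(0)=0$ and $h$ is $C^1$ on $[0,1]$, this gives the strict sign of $h$ on $(0,1]$.
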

	
	\begin{proof}
		For real numbers \(a,b\in\R\), one has
		\[
		|a+b|^p
		+
		|a-b|^p
		\begin{cases}
			>
			2|a|^p+2|b|^p,
			&
			p>2,
			\\[1ex]
			<
			2|a|^p+2|b|^p,
			&
			1<p<2,
		\end{cases}
		\]
		whenever \(ab\neq0\). Equality holds if and only if \(ab=0\).
		Integrating this pointwise identity yields the equivalence between
		{\rm (i)} and {\rm (ii)}. Let now $U:\widehat X_\omega\to\widehat X_\omega$
		be a linear isometry, and assume that \(fg=0\) almost everywhere.
		Then
		\[
		\begin{aligned}
			\|Uf+Ug\|_{L^p(\Omega)}^p+\|Uf-Ug\|_{L^p(\Omega)}^p
			&=\|f+g\|_{L^p(\Omega)}^p+
			\|f-g\|_{L^p(\Omega)}^p=2\|f\|_{L^p(\Omega)}^p+2\|g\|_{L^p(\Omega)}^p \\
			&=2\|Uf\|_{L^p(\Omega)}^p+2\|Ug\|_{L^p(\Omega)}^p.
		\end{aligned}
		\]
		Hence, $(Uf)(Ug)=0$ almost everywhere.
	\end{proof}
	
	The additive structure of \(\widehat X_\omega\) imposes a strong
	restriction on disjoint elements.
	
	\begin{lemma}\label{l5.2}
		Let $f,g\in\widehat X_\omega$ be non-zero functions such that $fg=0$ almost everywhere.
		Then
		\[
		\left|\supp_{\mathrm{ext}}(f)\cup\supp_{\mathrm{ext}}(g)\right|\leq2.
		\]
	\end{lemma}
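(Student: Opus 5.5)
The plan is to prove first that $f$ and $g$ must have the \emph{same} exterior support, and then to bound the size of this common support by a slice-wise combinatorial argument on finite Bernoulli cubes. Throughout I use the facts established in the proof of Proposition~\ref{p2.2}. For a finite $F\subseteq\mathbb N$, an element $f=c\mathbf 1+\sum_n J_nf_n$ of $\widehat X_\omega$ splits as $f=S_F+R_F$. Here $S_F$ depends only on the coordinates $(\pi_n)_{n\in F}$, and $R_F$ is $\mathcal A_F$-measurable, where $\mathcal A_F=\sigma(\pi_n:n\notin F)$. The same splitting holds for $g$.

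\textbf{Step 1: $\supp_{\mathrm{ext}}(f)=\supp_{\mathrm{ext}}(g)$.} Suppose $n\in\supp_{\mathrm{ext}}(f)\setminus\supp_{\mathrm{ext}}(g)$. Then $g$ is $\mathcal A_{\{n\}}$-measurable, say $g=G(\omega')$ with $\omega'$ the coordinates other than $n$. Also $f=f_n(\pi_n)+R(\omega')$. By Fubini, for a.e.\ $\omega'$ with $G(\omega')\neq0$ we would need $f_n(a)+R(\omega')=0$ for every $a\in\Omega_n$; here we use that every atom of $\Omega_n$ has positive mass. This forces $f_n$ to be constant, hence $f_n=0$ since $f_n\in H_n$, a contradiction. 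Therefore $G=0$ a.e., contradicting $g\neq0$. By symmetry both supports coincide; call the common set $S$. It remains to show $|S|\le2$.

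\textbf{Step 2: slice reduction.} Assume towards a contradiction that $S$ contains three distinct indices $n,m,l$, and put $F=\{n,m,l\}$. By Fubini, for a.e.\ value $w$ of the remaining coordinates, the two functions
\[
\Phi(a,b,c)=f_n(a)+f_m(b)+f_l(c)+r,\qquad \Psi(a,b,c)=g_n(a)+g_m(b)+g_l(c)+s
\]
on the finite cube $\Omega_n\times\Omega_m\times\Omega_l$ satisfy $\Phi\Psi\equiv0$ pointwise. Here $r=R_F(w)$ and $s=R'_F(w)$, and all six coordinate functions are non-constant. So the problem is reduced to a purely finite statement: two ``additively separable'' functions $\alpha(a)+\beta(b)+\gamma(c)+r$ and $\alpha'(a)+\beta'(b)+\gamma'(c)+s$ with all summands non-constant cannot have disjoint supports.

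\textbf{Step 3: the finite combinatorics (main obstacle).} Fix a pair $(b,c)$. The one-variable slices $\alpha(\cdot)+t$ and $\alpha'(\cdot)+t'$ are non-constant functions on $\Omega_n$ with disjoint supports. Since a non-constant function vanishes on at most one of its level sets, $\{\alpha+t\neq0\}$ must be contained in the single level set $\{\alpha'=-t'\}$, and symmetrically. Hence the complement of one level set of $\alpha$ lies in one level set of $\alpha'$, and vice versa. This pins down rigid relations: essentially each of $\alpha,\alpha'$ must be two-valued with complementary level sets. It also forces $t=\beta(b)+\gamma(c)+r$ to equal the unique value killing one level set of $\alpha$, and similarly for $t'$.

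The same holds for every pair $(b,c)$. So $\beta(b)+\gamma(c)$ can take at most two values as $(b,c)$ ranges over $\Omega_m\times\Omega_l$ (one for each of the two ``which side vanishes'' options). On the other hand, by Lemma~\ref{l2.2}, the sumset of the value sets of $\beta$ and $\gamma$, each of size $\ge2$, has at least $3$ elements. This is the desired contradiction, which is exactly where the third index is needed. I expect the delicate point to be making the level-set bookkeeping in the one-variable slices airtight, including the degenerate case where a slice of $\Phi$ or $\Psi$ vanishes identically. If that case analysis becomes messy, I would instead count $|\{\Phi=0\}|$ against $|\{\Psi\ne0\}|$ directly using the sumset bound.
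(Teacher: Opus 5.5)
Your Steps 1 and 2 are fine. Step 1 is the Fubini argument the paper uses at the end of its proof (to exclude $g_{n_1}=0$), moved to the front. It correctly guarantees that all six functions in your slice are non-constant.

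The gap is in Step 3. A single slice $(b,c)$ only tells you that the level sets $L=\{\alpha=-t\}$ and $L'=\{\alpha'=-t'\}$ are proper, non-empty, and satisfy $L\cup L'=\Omega_n$. This does not force $\alpha$ or $\alpha'$ to be two-valued, nor their level sets to be complementary, because $\alpha$ may take arbitrarily many values on $\Omega_n\setminus L$. For instance, on $\Omega_2$ take the functions with values $(0,0,1,2)$ and $(5,0,0,0)$, with $t=t'=0$ (adjust by constants if you want mean zero). Their product vanishes identically, so they form an admissible slice. So one slice only gives $-t\in\alpha(\Omega_n)$, which bounds the number of values of $\beta(b)+\gamma(c)$ by $|\alpha(\Omega_n)|$, not by $2$. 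Your contradiction with Lemma~\ref{l2.2} therefore does not follow. The ``which side vanishes'' count is a heuristic, not an argument, and the fallback counting idea is too vague to assess.

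The missing ingredient is a comparison \emph{between} slices. For $P=(b,c)$ write $L(P)=\{\alpha=-t(P)\}$ and $L'(P)=\{\alpha'=-t'(P)\}$.
\begin{itemize}
\item Suppose $t(P_1),t(P_2),t(P_3)$ are pairwise distinct. Then $L(P_3)$ is non-empty, since $L'(P_3)\neq\Omega_n$. It is also disjoint from $L(P_1)$ and $L(P_2)$.
\item Since $\Omega_n\setminus L(P_i)\subseteq L'(P_i)$, this gives $L(P_3)\subseteq L'(P_1)\cap L'(P_2)$. Distinct level sets of $\alpha'$ are disjoint, so $t'(P_1)=t'(P_2)$.
\item Because $t$ takes at least three values (Lemma~\ref{l2.2}), $t'(P)=t'(Q)$ whenever $t(P)\neq t(Q)$. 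The same holds when $t(P)=t(Q)$, by comparing both with a point of different $t$-value.
\end{itemize}
Thus $t'$ is constant on $\Omega_m\times\Omega_l$, contradicting Lemma~\ref{l2.2} applied to $\beta',\gamma'$.

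Alternatively, close Step 3 as the paper does. The mixed second difference of the slice identity in two coordinates gives $B(V_r,V_s)=\{0\}$ for $B((u,v),(u',v'))=uv'+vu'$. Three non-zero pairwise $B$-orthogonal subspaces of $\mathbb{R}^2$ lie in a single isotropic line, $\mathbb{R}(1,0)$ or $\mathbb{R}(0,1)$. That would make all three $g$-components or all three $f$-components constant, which your Step 1 already rules out.
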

	
	\begin{proof}
		Write
		\[
		f
		=
		a\mathbf 1
		+
		\sum_{n=1}^{\infty}J_nf_n,
		\qquad
		g
		=
		b\mathbf 1
		+
		\sum_{n=1}^{\infty}J_ng_n,
		\]
		where $a,b\in\mathbb R$ and $f_n,g_n\in H_n$. Fix two distinct indices $r,s\in\mathbb N$. We may write
		\[
		f=F+J_rf_r+J_sf_s, \qquad g=G+J_rg_r+J_sg_s,\]
		where \(F\) and \(G\) are measurable with respect to $\sigma(\pi_n:n\notin\{r,s\})$.
		Indeed, after subtracting the \(r\)-th and \(s\)-th exterior
		components, the remaining partial sums are measurable with respect to
		this sigma-algebra and converge in \(L^p(\Omega)\). The corresponding
		subspace of \(L^p(\Omega)\) is closed. Since $fg=0$ almost everywhere, Fubini's theorem and the finiteness of $\Omega_r\times\Omega_s$ yield a set
		\[
		E_{r,s}\subseteq\prod_{n\notin\{r,s\}}\Omega_n
		\]
		of full measure such that, for every $\xi\in E_{r,s}$, we have
		\[
		\bigl(F(\xi)+f_r(\alpha)+f_s(\gamma)\bigr)\bigl(G(\xi)+g_r(\alpha)+g_s(\gamma)\bigr)=0
		\]
		for all $\alpha\in\Omega_r$ and $\gamma\in\Omega_s$. Fix $\xi\in E_{r,s}$, set $A:=F(\xi)$, $C:=G(\xi)$, and define
		\[
		Q(\alpha,\gamma):=\bigl(A+f_r(\alpha)+f_s(\gamma)\bigr)\bigl(C+g_r(\alpha)+g_s(\gamma)\bigr).
		\]
		Then $Q(\alpha,\gamma)=0$ for every $\alpha\in\Omega_r$ and $\gamma\in\Omega_s$. Let $\alpha,\beta\in\Omega_r$ and $\gamma,\delta\in\Omega_s$. Taking the difference
		\[
		Q(\alpha,\gamma) -Q(\alpha,\delta)-Q(\beta,\gamma)+Q(\beta,\delta),
		\]
		we obtain zero. Expanding \(Q\), all terms depending only on the
		\(r\)-th coordinate or only on the \(s\)-th coordinate cancel. The
		only remaining terms are the mixed ones. Hence,
		\[
			\bigl(f_r(\alpha)-f_r(\beta)\bigr)\bigl(g_s(\gamma)-g_s(\delta)\bigr)+\bigl(g_r(\alpha)-g_r(\beta)\bigr)\bigl(f_s(\gamma)-f_s(\delta)\bigr)=0.
		\]
		For each \(n\in\mathbb N\), define
		\[
		V_n:=\operatorname{span}\left\{\left(f_n(\alpha)-f_n(\beta),
		g_n(\alpha)-g_n(\beta)\right):\alpha,\beta\in\Omega_n\right\}\subseteq\mathbb R^2.
		\]
		Consider the non-degenerate symmetric bilinear form $B:\mathbb R^2\times\mathbb R^2\to\mathbb R$ defined by
		\[
		B\bigl((u,v),(u',v')\bigr):=uv'+vu'.
		\]
		The preceding identity shows that $B(V_r,V_s)=\{0\}$ whenever $r\neq s$. Now, assume, towards a contradiction, that there exist three distinct indices $n_1,n_2,n_3\in\mathbb N$
		such that $V_{n_j}\neq\{0\}$, $j=1,2,3$. Since the subspaces \(V_{n_1},V_{n_2},V_{n_3}\) are pairwise
		orthogonal with respect to the non-degenerate bilinear form \(B\) on \(\mathbb R^2\), they must all be contained in the same isotropic\footnote{Recall that a subspace \(L\) is called isotropic if $B(v,w)=0$ for every $v,w\in L$. For a one-dimensional subspace \(L=\mathbb R v\), this is equivalent to \(B(v,v)=0\).} line. Let us show this.
		Choose a non-zero vector $v_1\in V_{n_1}$. Since $B(V_{n_1},V_{n_j})=\{0\}$, $j=2,3$, we have
		\[
		V_{n_2},V_{n_3}\subseteq v_1^\perp.
		\]
		As \(B\) is non-degenerate on \(\mathbb R^2\), the orthogonal subspace \(v_1^\perp\) is one-dimensional. Since both \(V_{n_2}\) and \(V_{n_3}\) are non-zero, it follows that
		\[
		V_{n_2}=V_{n_3}=v_1^\perp.
		\]
		Moreover, $B(V_{n_2},V_{n_3})=\{0\}$, so the line $L:=v_1^\perp$ is isotropic. Finally, $V_{n_1}\subseteq L^\perp=L$. Hence, $V_{n_1},V_{n_2},V_{n_3}\subseteq L$, where \(L\) is an isotropic line.
		
		The only isotropic lines for \(B\) are $\mathbb R(1,0)$ and $\mathbb R(0,1)$. Interchanging \(f\) and \(g\) if necessary, we may assume that $V_{n_j}\subseteq\mathbb R(1,0)$, $j=1,2,3$. Hence, \(g_{n_j}\) is constant for every \(j\). Since $g_{n_j}\in H_{n_j}$, we obtain $g_{n_j}=0$, $j=1,2,3$.
		
		In particular, $g_{n_1}=0$. Hence, \(g\) does not depend on the exterior coordinate indexed by
		\(n_1\). More precisely, writing
		\[
		\Omega=\Omega_{n_1}\times\Omega', \quad \Omega':=\prod_{n\neq n_1}\Omega_n,
		\]
		there exists a measurable function $\widetilde g:\Omega'\to\R$ and a set $E_0\subseteq\Omega'$ of full measure such that $g(\alpha,\xi)=\widetilde g(\xi)$ for every $\alpha\in\Omega_{n_1}$
		and every $\xi\in E_0$. Here we use the finiteness of \(\Omega_{n_1}\). Since \(g\neq0\), the set
		\[
		E_1:=\left\{\xi\in\Omega':\widetilde g(\xi)\neq0\right\}
		\]
		has positive measure. On the other hand, since $fg=0$ almost everywhere, again Fubini's theorem and the finiteness of \(\Omega_{n_1}\) yield a set $E_2\subseteq\Omega'$ of full measure such that 
		\begin{equation}
			\label{Eq19}
			f(\alpha,\xi)g(\alpha,\xi)= 0
		\end{equation}
		for every $\alpha\in\Omega_{n_1}$ and $\xi\in E_2$. Write $f=F+J_{n_1}f_{n_1}$, where \(F\) does not depend on the \(n_1\)-st exterior coordinate. After choosing a representative of \(F\), there exists a set $E_3
		\subseteq\Omega'$ of full measure such that
		\[
		f(\alpha,\xi)=F(\xi)+f_{n_1}(\alpha)
		\]
		for every $\alpha\in\Omega_{n_1}$ and $\xi\in E_3$. Choose $\xi\in E_0 \cap E_1\cap E_2\cap E_3$.
		Then $g(\alpha,\xi)=\widetilde g(\xi)\neq 0$ for all $\alpha\in\Omega_{n_1}$. Consequently, by \eqref{Eq19}, $f(\alpha,\xi)=0$ for every $\alpha\in\Omega_{n_1}$. Therefore,
		\[
		0=F(\xi)+f_{n_1}(\alpha), \quad \forall \a\in\O_{n_1}.
		\]
		Hence, \(f_{n_1}\) is constant. Since $f_{n_1} \in H_{n_1}$,
		we obtain $f_{n_1}=0$. This contradicts $V_{n_1}\neq\{0\}$. Therefore, there exist at most two indices \(n\) such that $V_n\neq\{0\}$.
		
		Finally, $V_n=\{0\}$ if and only if both \(f_n\) and \(g_n\) are constant. Since $f_n,g_n\in H_n$, this is equivalent to $f_n=g_n=0$. Hence, $\left|\supp_{\mathrm{ext}}(f)\cup\supp_{\mathrm{ext}}(g)\right|\leq2$.
	\end{proof}
	
	We next show that a non-constant element of \(\widehat X_\omega\)
	cannot be decomposed into arbitrarily many non-zero disjoint
	fragments.
	
	\begin{lemma}\label{lem:bounded-disjoint-fragmentation}
		Let $h\in\widehat X_\omega$ be non-constant. Then there exists a finite constant $M(h)<\infty$
		such that every decomposition 
		\[h=y_1+\cdots+y_q\]
		into non-zero elements $y_1,\dots,y_q\in\widehat X_\omega$
		satisfying $y_iy_j=0$ almost everywhere for every \(i\neq j\), necessarily satisfies
		\[
		q\leq M(h).
		\]
	\end{lemma}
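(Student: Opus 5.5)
The plan is to show that, once \(q\geq 2\), every fragment \(y_i\) depends on at most two exterior coordinates, and that these coordinates can be controlled by \(\supp_{\mathrm{ext}}(h)\) alone. The bound \(M(h)\) then comes from counting atoms of a finite \(\sigma\)-algebra. The case \(q=1\) is trivial, so assume \(q\geq2\). Two facts are used throughout:
\begin{itemize}
\item[(a)] \(\supp_{\mathrm{ext}}(h)\subseteq\bigcup_i\supp_{\mathrm{ext}}(y_i)\), because the exterior components are additive, i.e. \(h_n=\sum_i (y_i)_n\);
\item[(b)] \(\supp_{\mathrm{ext}}(h)\neq\varnothing\), because \(h\) is non-constant.
\end{itemize}

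\emph{Step 1: no fragment is constant.} If \(y_i=c\mathbf 1\) with \(c\neq0\), then disjointness forces \(y_j=0\) for every \(j\neq i\), which contradicts \(q\geq2\). Hence every \(y_i\) has non-empty exterior support. By Lemma~\ref{l5.2}, applied to each pair \(y_i,y_j\), the union \(\supp_{\mathrm{ext}}(y_i)\cup\supp_{\mathrm{ext}}(y_j)\) has at most two elements.

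\emph{Step 2: the supports pairwise intersect.} Suppose \(\supp_{\mathrm{ext}}(y_i)\cap\supp_{\mathrm{ext}}(y_j)=\varnothing\). Then \(y_i\) and \(y_j\) are measurable with respect to \(\sigma\)-algebras generated by disjoint families of exterior coordinates, so they are independent; this uses the finiteness of the supports and the closedness argument already used in Proposition~\ref{p2.2}. The events \(\{y_i\neq0\}\) and \(\{y_j\neq0\}\) have positive measure, so their intersection has positive measure, which contradicts \(y_iy_j=0\).

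Pairwise-intersecting sets of size at most two, each pairwise union of size at most two, must all lie in a common set \(S\) with \(|S|\leq2\). Indeed, if some support is \(\{r,s\}\), every other support is contained in \(\{r,s\}\). If all supports are singletons, they must coincide. If some support is \(\{r\}\), then every support contains \(r\), and two supports \(\{r,s\},\{r,t\}\) with \(s\neq t\) are excluded by the union bound. By (a) and (b), \(S\) meets \(\supp_{\mathrm{ext}}(h)\). If \(|\supp_{\mathrm{ext}}(h)|=2\), then \(S=\supp_{\mathrm{ext}}(h)=\{r,s\}\). All \(y_i\) are then measurable with respect to the finite algebra \(\sigma(\pi_r,\pi_s)\), which has \(2^{r+s}\) atoms, and disjoint non-zero functions on it number at most \(2^{r+s}\). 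If \(|\supp_{\mathrm{ext}}(h)|\geq3\), Step 1 already rules out \(q\geq2\).

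\emph{Step 3 (main obstacle): \(\supp_{\mathrm{ext}}(h)=\{n\}\) and \(S=\{n,m\}\) with \(m\neq n\) arbitrary.} Here the atom count \(2^{n+m}\) is not uniform in the decomposition, so I need to show that fragments cannot genuinely use the extra coordinate \(m\).
\begin{itemize}
\item Write \(y=c+J_na+J_mb\) for a fragment with \(b\neq0\). Since \(h\) has no \(m\)-component, \(h-y=(d-c)+J_n(g-a)-J_mb\), where \(h=d\mathbf 1+J_ng\).
\item Because \(y(h-y)=0\), I rerun the rectangle computation from the proof of Lemma~\ref{l5.2} for the pair \((y,h-y)\). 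This gives \(B(V_n,V_m)=0\), where \(V_m\) is spanned by vectors \((u,-u)\) with \(u\neq 0\).
\item Such vectors are non-isotropic for \(B\). So \(V_m\) is the line \(\mathbb R(1,-1)\), and \(V_n\subseteq V_m^{\perp}=\mathbb R(1,1)\).
\item It follows that \(a\) and \(g-a\) have equal increments, so \(a=g/2\) up to a constant. Then \(y=\tfrac12 h+\text{const}+J_mb\).
\end{itemize}
Next, \(y\) must vanish wherever it differs from \(h\), i.e. \(y\in\{0,h\}\) pointwise. Fixing \(\gamma\), the function \(\alpha\mapsto \tfrac12 h(\alpha)+\lambda\) takes only values in \(\{0,h(\alpha)\}\). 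This forces \(h\) to take at most two values \(\pm 2\lambda\) on \(\Omega_n\), with \(h(\alpha)=\pm2\lambda\) correlated with the value of \(b(\gamma)\).

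I expect this to pin \(h\) to a two-valued function and to show there are at most two fragments, one on \(\{h=2\lambda\}\) and one on \(\{h=-2\lambda\}\), each independent of \(m\). In this case a bound such as \(q\leq 2^n\), the number of atoms of \(\Omega_n\), will suffice. Altogether this gives
\[
M(h)=\max\{1,\,2^{2\max\supp_{\mathrm{ext}}(h)}\}
\]
as a valid, if crude, choice. Making this last case analysis airtight is the step I expect to require the most care.
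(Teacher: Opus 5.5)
\paragraph{Gap in Step~3.}
Your Steps~1 and~2 are correct and match the paper's reduction. Once $q\geq2$, all fragments live on a common set $S$ of at most two exterior coordinates with $\supp_{\mathrm{ext}}(h)\subseteq S$. The cases $|\supp_{\mathrm{ext}}(h)|\geq3$, $|\supp_{\mathrm{ext}}(h)|=2$ and $S=\supp_{\mathrm{ext}}(h)=\{n\}$ (the last one you leave implicit) are then settled by counting atoms. The gap is Step~3, which is the only non-trivial case, and you do not prove it. Moreover, the conclusion you aim for there is false: fragments need not be independent of the extra coordinate $m$, so no contradiction can come from the existence of a fragment with $b\neq0$. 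For instance, let $h=J_1r$ with $r$ the Rademacher function on $\Omega_1$, fix $m\geq2$, and let $\rho\in H_m$ be $\rho(\omega)=(-1)^{\omega_1}$. Then $y_{\pm}:=\tfrac12(J_1r\pm J_m\rho)$ are non-zero and satisfy $y_++y_-=h$ and $y_+y_-=\tfrac14\bigl((J_1r)^2-(J_m\rho)^2\bigr)=0$. Yet $\supp_{\mathrm{ext}}(y_\pm)=\{1,m\}$. What must be shown is a bound on $q$ that is uniform in $m$ even though the fragments genuinely use $m$.

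\paragraph{How to close it.}
Your bilinear-form computation is correct and can be completed. It gives $y=h\mathbf 1_{\{h=2\lambda(\gamma)\}}$, where $h$ takes exactly two values $\pm2\lambda_0$ and $\lambda(\gamma)\in\{\pm\lambda_0\}$. A fragment with zero $m$-component is disjoint from such a $y$ only if $\lambda$ is constant, i.e.\ $b=0$. Two fragments with non-zero $m$-components are disjoint only if their $\lambda$'s are opposite at every $\gamma$. Hence $q\leq2$ in this situation.

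The paper's argument is shorter. It shows that for every $\gamma\in\Omega_m$, each section $y_i(\cdot,\gamma)$ is non-zero. Suppose $y_i(\cdot,\gamma_0)\equiv0$. Then the $n$-component of $y_i$ is constant, hence zero, so $y_i$ depends only on the $m$-th coordinate. At a point $\gamma_1$ with $y_i(\gamma_1)\neq0$, all other fragments vanish on $\Omega_n\times\{\gamma_1\}$. So $h\equiv y_i(\gamma_1)$ on $\Omega_n$, contradicting that $h$ is non-constant and depends only on the $n$-th coordinate. Fixing any $\gamma$, the $q$ sections are non-zero, pairwise disjoint functions on $\Omega_n$, whence $q\leq 2^n$.
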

	
	\begin{proof}
		The conclusion is trivial when $q=1$. Assume that $q\geq2$. By Lemma~\ref{l5.2}, for every \(i\neq j\),
		\begin{equation}	
			\label{Eq17}
		\left|\supp_{\mathrm{ext}}(y_i)\cup\supp_{\mathrm{ext}}(y_j)\right|\leq2.
		\end{equation}
		Set $E:=\bigcup_{j=1}^q\supp_{\mathrm{ext}}(y_j)$.
		We claim that $|E|\leq2$. Indeed, suppose first that $\left|\supp_{\mathrm{ext}}(y_i)\right|=2$ for some \(i\).
		Since \(\supp_{\mathrm{ext}}(y_i)\) already contains two indices, it follows from \eqref{Eq17} that
		\[
		\supp_{\mathrm{ext}}(y_j)\subseteq \supp_{\mathrm{ext}}(y_i).
		\]
		Hence, $|E|\leq 2$. Assume now that $\left|\supp_{\mathrm{ext}}(y_i)\right|\leq 1$ for every $i$.
		No \(y_i\) can be a non-zero constant function. Indeed, since
		\(q\geq2\), there exists \(j\neq i\), and the condition $y_i y_j=0$ almost everywhere would force \(y_j=0\), a contradiction. Therefore, $\supp_{\mathrm{ext}}(y_i)$ is a singleton for every \(i\). Suppose that two fragments \(y_i\) and \(y_j\) depended on distinct exterior coordinates. Then they would be measurable with respect to
		independent sigma-algebras. Since both fragments are non-zero, we have $\mu(\{y_i\neq0\})>0$ and $\mu(\{y_j\neq0\})>0$. By independence,
		\[
		\mu\bigl(\{y_i\neq0\}\cap\{y_j\neq0\}\bigr)=\mu(\{y_i\neq0\})\mu(\{y_j\neq0\})>0.
		\]
		This contradicts $y_i y_j=0$ almost everywhere. Consequently, all the singleton exterior supports coincide. Thus, $|E|\leq2$.
		
		Since $h=\sum_{j=1}^q y_j$, we have $\supp_{\mathrm{ext}}(h) \subseteq E$. We distinguish three cases.
		Assume first that $\left|\supp_{\mathrm{ext}}(h)\right|\geq 3$.
		Then no decomposition with \(q\geq2\) exists. In this case, we may take $M(h)=1$.
		Assume next that $\supp_{\mathrm{ext}}(h)=\{r,s\}$ for distinct indices \(r,s\). In this case, necessarily, $E=\{r,s\}$. Then, every fragment \(y_j\) is measurable with respect to $\sigma(\pi_r,\pi_s)$. The finite probability space $\Omega_r\times\Omega_s$ has \(2^{r+s}\) atoms. Since the fragments are non-zero and pairwise disjoint, we obtain $q\leq 2^{r+s}$.
		
		Finally, assume that $\supp_{\mathrm{ext}}(h)=\{r\}$. If $E=\{r\}$, then every \(y_j\) depends only on the \(r\)-th exterior coordinate. Since \(\Omega_r\) has \(2^r\) atoms, we obtain $q\leq2^r$. It remains to consider the case $E=\{r,s\}$ for some \(s\neq r\). Every fragment can then be written as
		\[
		y_j=c_j\mathbf 1+J_ru_j+J_sv_j,
		\]
		where $c_j\in\mathbb R$,  $u_j\in H_r$ and $v_j\in H_s$. We claim that, for every $\eta\in\Omega_s$, the section $y_j(\,\cdot\,,\eta)$ is non-zero for every \(j\). Assume, towards a contradiction, that $y_j(\,\cdot\,,\eta_0)\equiv 0$ for some \(j\) and some \(\eta_0\in\Omega_s\). Then $c_j+u_j(\alpha)+v_j(\eta_0)=0$ for every $\alpha\in\Omega_r$. Hence, \(u_j\) is constant. Since $u_j\in H_r$, we obtain $u_j=0$. Thus, \(y_j\) depends only on the \(s\)-th exterior coordinate. Since \(y_j\neq0\), there exists $\eta_1\in\Omega_s$ such that $y_j(\eta_1)\neq 0$. The pairwise disjointness of the fragments implies that $y_i(\alpha,\eta_1)=0$ for every $\alpha\in\Omega_r$ whenever \(i\neq j\). Therefore,
		\[
		h(\alpha)=\sum_{i=1}^q y_i(\alpha,\eta_1)=y_j(\eta_1)
		\quad \text{for every }\alpha\in\Omega_r.
		\]
		This contradicts the fact that \(h\) is non-constant and depends only
		on the \(r\)-th exterior coordinate. Thus, for every \(\eta\in\Omega_s\), the functions $y_1(\,\cdot\,,\eta),\dots,y_q(\,\cdot\,,\eta)$ are non-zero and pairwise disjoint on \(\Omega_r\). Since $|\Omega_r|=2^r$, we conclude that $q\leq2^r$. The proof is complete.
	\end{proof}

	We may now prove that the image of the constant function under a
	linear isometry is necessarily constant.
	
	\begin{proposition}\label{p5.1}
		Assume that $p\neq2$ and let $U:\widehat X_\omega\to\widehat X_\omega$
		be a linear isometry. Then there exists $\varepsilon\in\{-1,1\}$ such that
		\[
		U\mathbf 1=\varepsilon\mathbf 1.
		\]
	\end{proposition}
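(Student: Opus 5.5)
The plan is to exploit the fact that the constant function \(\mathbf 1\) admits disjoint decompositions into arbitrarily many non-zero pieces inside \(\widehat X_\omega\), whereas Lemma~\ref{lem:bounded-disjoint-fragmentation} says that a non-constant element admits only boundedly many. Since \(U\) preserves disjointness, \(U\mathbf 1\) will be forced to be constant, and the isometry property will then pin down the constant.

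Concretely, fix \(n\in\mathbb N\) and write
\[
\mathbf 1=\sum_{a\in\Omega_n}J_n\mathbf 1_{\{a\}},
\]
as in the proof of Lemma~\ref{l2.4}. Each \(J_n\mathbf 1_{\{a\}}\) lies in \(\widehat X_\omega\), since it equals \(2^{-n}\mathbf 1+J_nu_a\). These \(2^n\) functions are non-zero and pairwise disjoint. Applying the linear map \(U\) gives
\[
U\mathbf 1=\sum_{a\in\Omega_n}U\bigl(J_n\mathbf 1_{\{a\}}\bigr).
\]
Each summand is non-zero, because \(U\) is an isometry and hence injective. Since \(p\neq2\), Lemma~\ref{l5.1} shows that the summands are pairwise disjoint almost everywhere. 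Thus \(U\mathbf 1\) admits a decomposition into \(q=2^n\) non-zero, pairwise disjoint elements of \(\widehat X_\omega\), for every \(n\in\mathbb N\).

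Now suppose that \(U\mathbf 1\) were non-constant. Lemma~\ref{lem:bounded-disjoint-fragmentation} then provides a finite bound \(M(U\mathbf 1)\) on the number of fragments in any such decomposition. Choosing \(n\) with \(2^n>M(U\mathbf 1)\) gives a contradiction. Hence \(U\mathbf 1=c\mathbf 1\) for some \(c\in\mathbb R\). Finally,
\[
|c|=\|U\mathbf 1\|_{L^p(\Omega)}=\|\mathbf 1\|_{L^p(\Omega)}=1,
\]
so \(c=\varepsilon\in\{-1,1\}\).

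There is no real obstacle left: all the substantive work sits in Lemmas~\ref{l5.1} and~\ref{lem:bounded-disjoint-fragmentation}. The only points to check are that the fragments \(J_n\mathbf 1_{\{a\}}\) indeed belong to \(\widehat X_\omega\), and that the images of these fragments remain non-zero, which follows from injectivity.
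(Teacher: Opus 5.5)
Your proposal is correct and follows the paper's proof essentially step for step. You split $\mathbf 1$ into the $2^n$ disjoint indicators $J_n\mathbf 1_{\{a\}}$ and use Lemma~\ref{l5.1} together with injectivity to obtain $2^n$ non-zero disjoint fragments of $U\mathbf 1$. Lemma~\ref{lem:bounded-disjoint-fragmentation} then forces $U\mathbf 1$ to be constant, and the norm identity gives $|c|=1$.
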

	
	\begin{proof}
		Fix \(n\in\mathbb N\). For every atom \(a\in\Omega_n\), the indicator $J_n\mathbf 1_{\{a\}}$
		belongs to \(\widehat X_\omega\). The family $J_n\mathbf 1_{\{a\}}$, $a\in\Omega_n$, consists of non-zero pairwise disjoint functions and satisfies
		\[
		\sum_{a\in\Omega_n}J_n\mathbf 1_{\{a\}}=\mathbf 1.
		\]
		By Lemma~\ref{l5.1}, the functions $U(J_n\mathbf 1_{\{a\}})$, $a\in\Omega_n$, are pairwise disjoint. Since \(U\) is an isometry, they are non-zero. Moreover,
		\[
		\sum_{a\in\Omega_n}U\left(J_n\mathbf 1_{\{a\}}\right)=U\mathbf 1.
		\]
		Thus, \(U\mathbf 1\) can be decomposed into \(2^n\) non-zero pairwise
		disjoint fragments for every \(n\in\mathbb N\).
		
		If \(U\mathbf 1\) were non-constant,
		Lemma~\ref{lem:bounded-disjoint-fragmentation} would provide a finite
		upper bound for the number of fragments in such a decomposition. This
		is impossible because $2^n\to\infty$ as $n\to\infty$.
		Hence, $U\mathbf 1=c\mathbf 1$ for some \(c\in\mathbb R\). Since \(U\) is an isometry,
		\[
		1=\|\mathbf 1\|_{L^p(\Omega)}=\|U\mathbf 1\|_{L^p(\Omega)}=|c|.
		\]
		Therefore, $c\in\{-1,1\}$.
	\end{proof}
	
	We now use the real version of Rudin's equimeasurability theorem. We
	recall that, if $p\notin2\mathbb N$, and \(f,g\) are real-valued \(L^p\)-functions on probability spaces
	such that
	\[
	\|\mathbf 1+tf\|_p=\|\mathbf 1+tg\|_p
	\]
	for every $t\in\mathbb R$, then $\dist(f)=\dist(g)$ (see the original reference \cite{R76} or \cite{KK01}).
	
	\begin{theorem}\label{t5.1}
		Assume that $1<p<\infty$, $p\notin2\mathbb N$ and let $U:\widehat X_\omega\to\widehat X_\omega$ be a linear isometry. Then there exist $\varepsilon\in\{-1,1\}$ and a distributional embedding $T:X_\omega\to X_\omega$
		such that
		\[
		U=\varepsilon\widehat T.
		\]
		Consequently, every linear isometry of \(\widehat X_\omega\), up to a global change of sign, is described by the finite packing data of Theorem~\ref{t3.1}.
	\end{theorem}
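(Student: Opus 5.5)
By Proposition~\ref{p5.1} there is $\varepsilon\in\{-1,1\}$ with $U\mathbf 1=\varepsilon\mathbf 1$. I would pass to $V:=\varepsilon U$, which is again a linear isometry of $\widehat X_\omega$ and satisfies $V\mathbf 1=\mathbf 1$. Once I show that $V$ maps $X_\omega$ into $X_\omega$ and that $T:=V|_{X_\omega}$ is a distributional embedding, the identity $V=\widehat T$ follows at once: for $c\in\mathbb R$ and $x\in X_\omega$ we have $V(c\mathbf 1+x)=c\mathbf 1+Tx$, and therefore $U=\varepsilon\widehat T$. The final sentence of the statement then follows by applying Theorem~\ref{t3.1} to $T$.

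\emph{Step 1: equimeasurability.} Fix $x\in X_\omega$. For every $t\in\mathbb R$, linearity and $V\mathbf 1=\mathbf 1$ give
\[
\|\mathbf 1+tVx\|_{L^p(\Omega)}=\|V(\mathbf 1+tx)\|_{L^p(\Omega)}=\|\mathbf 1+tx\|_{L^p(\Omega)}.
\]
Since $p\notin2\mathbb N$, Rudin's equimeasurability theorem as recalled above yields $\dist(Vx)=\dist(x)$.

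\emph{Step 2: $V(X_\omega)\subseteq X_\omega$.} Write $Vx=c\mathbf 1+y$ with $y\in X_\omega$. The distributions of $Vx$ and $x$ coincide, both variables lie in $L^p\subseteq L^1$, and $p>1$, so their means agree: $c=\mathbb E[Vx]=\mathbb E[x]=0$. Hence $Vx\in X_\omega$. Thus $T:=V|_{X_\omega}$ is a linear operator $X_\omega\to X_\omega$ satisfying $\dist(Tx)=\dist(x)$ for every $x\in X_\omega$, that is, a distributional embedding. Boundedness is automatic because $T$ is an isometry, so Theorem~\ref{t3.1} applies to it.

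\emph{Main obstacle.} The only delicate point is applying Rudin's theorem correctly. It needs equality of $\|\mathbf 1+tf\|_p$ and $\|\mathbf 1+tg\|_p$ for \emph{all} real $t$, which is exactly why the normalization $V\mathbf 1=\mathbf 1$ from Proposition~\ref{p5.1} has to be secured first. It also needs both functions to be real-valued and to live on probability spaces; here both live on $(\Omega,\mu)$, so this is satisfied. The hypothesis $p\notin2\mathbb N$ is essential at this step: for even $p$, finitely many moments are all that the norms see, so distributions need not be determined. Everything else in the argument is formal.
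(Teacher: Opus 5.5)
Your proposal is correct and follows the paper's proof essentially step for step. You normalize via Proposition~\ref{p5.1} to $V=\varepsilon U$ with $V\mathbf 1=\mathbf 1$, apply Rudin's equimeasurability theorem to $\|\mathbf 1+tVx\|_p=\|\mathbf 1+tx\|_p$, use equality of means to get $V(X_\omega)\subseteq X_\omega$, and conclude $U=\varepsilon\widehat T$ with $T=V|_{X_\omega}$.
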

	
	\begin{proof}
		By Proposition~\ref{p5.1}, there exists $\varepsilon\in\{-1,1\}$ such that $U\mathbf 1=\varepsilon\mathbf 1$. Define $V:=\varepsilon U$. Then $V\mathbf 1=\mathbf 1$. Now, let $x\in X_\omega$. Then, for every \(t\in\mathbb R\), we have
		\[
			\|\mathbf 1+tVx\|_{L^p(\Omega)}=\left\|V\left(\mathbf 1+tx\right)\right\|_{L^p(\Omega)}=
			\|\mathbf 1+tx\|_{L^p(\Omega)}.
		\]
		By the Rudin's equimeasurability theorem, $\dist(Vx)=\dist(x)$. In particular,
		\[
		\int_\Omega Vx\,d\mu=\int_\Omega x\,d\mu=0.
		\]
		Hence, $V(X_\omega)\subseteq X_\omega$. Define $T:=V|_{X_\omega}$. Then $T:X_\omega\to X_\omega$
		is a distributional embedding. Moreover, for every $c\in\mathbb R$ and $x\in X_\omega$, we have
		\[
		V(c\mathbf 1+x)=c\mathbf 1+Tx=\widehat T(c\mathbf 1+x).
		\]
		Therefore, $V=\widehat T$. Since $V=\varepsilon U$,
		we conclude that $U=\varepsilon\widehat T$.
	\end{proof}

\subsection{Surjective linear isometries}

Theorem \ref{t5.1} classifies arbitrary linear isometric embeddings of \(R_\omega^p\) when $p\notin2\mathbb N$.
For surjective linear isometries, however, this restriction can be relaxed. Indeed, surjectivity allows us to recover the exterior blocks directly from the preservation of disjointness and the Boolean rigidity theorem.

\begin{proposition}\label{p5.2}
	Assume that $p\neq 2$ and let $V:\widehat X_\omega\to\widehat X_\omega$ be a surjective linear isometry satisfying
	$V\mathbf 1=\mathbf 1$. Then, for every \(n\in\mathbb N\), there exists a bijection $\varphi_n:\Omega_n\to\Omega_n$ such that
	\[
	V(J_nf)=J_n(f\circ\varphi_n)
	\]
	for every $f\in H_n$. Consequently, $V(X_\omega)=X_\omega$,
	and
	\[
	V\left(c\mathbf 1+\sum_{n=1}^{\infty}J_nf_n\right)
	=c\mathbf 1+\sum_{n=1}^{\infty}J_n(f_n\circ\varphi_n)
	\]
	for every $c\in\mathbb R$ and every canonical exterior decomposition $\sum_{n=1}^{\infty}J_nf_n\in X_\omega$.
\end{proposition}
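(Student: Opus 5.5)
The plan is to reproduce the argument of Proposition~\ref{p2.3} and Corollary~\ref{c2.1}, but with distribution preservation replaced by preservation of disjointness (Lemma~\ref{l5.1}) together with the normalization $V\mathbf 1=\mathbf 1$. Surjectivity then upgrades the inequality $\kappa(n)\geq n$ to an equality.

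\emph{Step 1: $V$ maps indicators in $\widehat X_\omega$ to indicators.} Let $\mathbf 1_A\in\widehat X_\omega$. Then $\mathbf 1_{A^c}=\mathbf 1-\mathbf 1_A\in\widehat X_\omega$ as well. Since $\mathbf 1_A\mathbf 1_{A^c}=0$, Lemma~\ref{l5.1} shows that $V\mathbf 1_A$ and $V\mathbf 1_{A^c}$ are disjoint, and $V\mathbf 1_A+V\mathbf 1_{A^c}=V\mathbf 1=\mathbf 1$. On $\{V\mathbf 1_A\neq0\}$ we have $V\mathbf 1_{A^c}=0$, hence $V\mathbf 1_A=1$ there. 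So $V\mathbf 1_A=\mathbf 1_{B}$ for some measurable $B$. Because $V$ is an isometry, $\mu(B)^{1/p}=\mu(A)^{1/p}$, so $\mu(B)=\mu(A)$.

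\emph{Step 2: each block goes into a single block.} Fix $n$ and $a\in\Omega_n$. By Step~1, $V(J_n\mathbf 1_{\{a\}})=\mathbf 1_{B_a}$ with $\mu(B_a)=2^{-n}$. The sets $B_a$ are pairwise disjoint by Lemma~\ref{l5.1}, and they sum to $\mathbf 1$. From here the proof of Proposition~\ref{p2.3} applies verbatim.
\begin{itemize}
\item Theorem~\ref{t2.1} gives $\mathbf 1_{B_a}=J_{k(a)}\mathbf 1_{C_a}$ for some index $k(a)$ and some $C_a\subseteq\Omega_{k(a)}$.
\item Independence across distinct indices forces $k(a)$ to be a common value $\kappa(n)$.
\item The sets $C_a$ then form a partition of $\Omega_{\kappa(n)}$ into sets of measure $2^{-n}$, so $\kappa(n)\geq n$.
\item Defining $\varphi_n(\omega)=a$ for $\omega\in C_a$ gives a measure-preserving $\varphi_n\colon\Omega_{\kappa(n)}\to\Omega_n$ with $V(J_nf)=J_{\kappa(n)}(f\circ\varphi_n)$ for $f\in H_n$, by linearity on the spanning vectors $u_a$.
\end{itemize}
The index $\kappa(n)$ is unique, by the uniqueness clause of Theorem~\ref{t2.1}.

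\emph{Step 3: surjectivity forces $\kappa(n)=n$.} The inverse $V^{-1}$ is again a surjective linear isometry of $\widehat X_\omega$ with $V^{-1}\mathbf 1=\mathbf 1$. Step~2 therefore yields $\lambda\colon\mathbb N\to\mathbb N$ with $V^{-1}(J_k(H_k))\subseteq J_{\lambda(k)}(H_{\lambda(k)})$ and $\lambda(k)\geq k$. Consequently
\[
J_n(H_n)=V^{-1}V(J_n(H_n))\subseteq J_{\lambda(\kappa(n))}(H_{\lambda(\kappa(n))}).
\]
A nonzero element of $J_n(H_n)$ (for instance $J_nu_a$, whose associated indicator depends on a unique coordinate by Theorem~\ref{t2.1}) then forces $\lambda(\kappa(n))=n$. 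Combining this with $n\leq\kappa(n)\leq\lambda(\kappa(n))=n$ gives $\kappa(n)=n$. As in the last part of Corollary~\ref{c3.2}, every fibre of the measure-preserving map $\varphi_n\colon\Omega_n\to\Omega_n$ has measure $2^{-n}$, so it is a single atom and $\varphi_n$ is a bijection.

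\emph{Step 4: conclusion.} By Step~3, $V$ maps the algebraic span of the $J_n(H_n)$ into $X_\omega$. Since $V$ is continuous and $X_\omega$ is closed, $V(X_\omega)\subseteq X_\omega$. The same holds for $V^{-1}$, hence $V(X_\omega)=X_\omega$. The displayed formula holds on finite exterior sums by linearity. It extends to every canonical exterior decomposition by applying $V$ to the partial sums, which converge by the canonical exterior decomposition proposition, and using continuity of $V$; the constant term is handled by $V\mathbf 1=\mathbf 1$.

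The main obstacle is Step~1, the passage from mere disjointness preservation to actual indicators. The device of pairing $\mathbf 1_A$ with its complement and using $V\mathbf 1=\mathbf 1$ is what removes the need for Rudin's theorem, and hence allows the whole range $p\neq2$. Everything after Step~1 reuses earlier arguments, the only new ingredient being the two-sided inequality coming from $V^{-1}$.
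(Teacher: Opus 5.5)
Your proof is correct. Steps~1--2 coincide with the paper's argument: pairing with the complement and using $V\mathbf 1=\mathbf 1$ to obtain indicators, then Theorem~\ref{t2.1} and the independence argument to send each block into a single $J_{\kappa(n)}(H_{\kappa(n)})$ with $\kappa(n)\geq n$.

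The genuine difference is in how surjectivity is used. The paper stays with the forward map and argues in three steps:
\begin{enumerate}
\item[(a)] Each fibre $I_k=\kappa^{-1}(\{k\})$ is finite, because the images $V(J_n(H_n))$, $n\in I_k$, are algebraically independent inside the finite-dimensional space $J_k(H_k)$.
\item[(b)] It proves $J_k(H_k)=\bigoplus_{n\in I_k}V(J_n(H_n))$ by taking a preimage and applying $\mathcal E_k$.
\item[(c)] It compares dimensions, using $\dim H_n=2^n-1$ and $n\leq k$ on $I_k$, to force $I_k=\{k\}$.
\end{enumerate}
It obtains $V(X_\omega)=X_\omega$ separately, from the mean-zero property.

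You instead note that Step~2 never used surjectivity. It therefore applies verbatim to $V^{-1}$, which is again a linear isometry of $\widehat X_\omega$ fixing $\mathbf 1$, and yields $\lambda$ with $\lambda(k)\geq k$. The inclusion $J_n(H_n)\subseteq J_{\lambda(\kappa(n))}(H_{\lambda(\kappa(n))})$ then forces $\lambda(\kappa(n))=n$, since distinct blocks meet only in $0$. The sandwich $n\leq\kappa(n)\leq\lambda(\kappa(n))=n$ concludes.

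Your route is shorter and symmetric. It needs no finiteness of fibres, no conditional-expectation argument and no dimension formula, only the atom-size inequality $\kappa(n)\geq n$. It also gives $V(X_\omega)=X_\omega$ directly from the same symmetry.

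The paper's route follows the same dimension-counting template as Corollary~\ref{c3.2}. Along the way it produces an explicit decomposition of each target block into images of source blocks. Conversely, your inverse trick would equally shorten Corollary~\ref{c3.2}, since the inverse of a surjective distributional embedding is again a distributional embedding.
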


\begin{proof}
	Although the proof parallels some arguments used in the classification of distributional embeddings, it cannot be deduced directly from Theorem~\ref{t3.1}. Indeed, at this stage we do not know that the restriction of \(V\) to \(X_\omega\) preserves distributions. We shall therefore recover the block structure of \(V\) directly from the preservation of disjointness and the Boolean rigidity established in Theorem~\ref{t2.1}. The surjectivity assumption will only be used afterwards to show that each exterior block is mapped onto itself.
	
	Fix $n\in\mathbb N$ and for every atom $a\in \Omega_n$, set $e_a:=J_n\mathbf 1_{\{a\}}$.
	Then $e_a(\mathbf 1-e_a)=0$ almost everywhere. By Lemma~\ref{l5.1}, \(V\) preserves disjointness and since $V\mathbf 1=\mathbf 1$, we obtain
	\[
	(Ve_a)(\mathbf 1-Ve_a)=0
	\]
	almost everywhere. Therefore, \(Ve_a\) is an indicator function.
	More precisely, there exists a measurable set $A_a\subseteq\Omega$ such that $Ve_a=\mathbf 1_{A_a}$.
	Moreover, since \(V\) is an isometry,
	\[
	\mu(A_a)=\|Ve_a\|_{L^p(\Omega)}^p=\|e_a\|_{L^p(\Omega)}^p=2^{-n}.
	\]
	In particular, \(A_a\) is non-trivial. By Theorem~\ref{t2.1}, there exist a unique index $\kappa_a\in
	\mathbb N$ and a non-trivial subset $B_a\subseteq\Omega_{\kappa_a}$ such that
	\[
	\mathbf 1_{A_a}=J_{\kappa_a}\mathbf 1_{B_a}
	\]
	almost everywhere. As in the proof of Proposition~\ref{p2.3}, the
	index \(\kappa_a\) is independent of \(a\). Indeed, if two such
	indicators depended on distinct exterior coordinates, the
	corresponding events would be independent and of positive measure,
	which is incompatible with their disjointness. Hence, there exists an index $\kappa(n) \in \mathbb N$ such that $\kappa_a = \kappa(n)$ for every $a \in \Omega_n$. Since $\sum_{a\in\Omega_n}e_a=\mathbf 1$,
	linearity of \(V\) gives
	\[
	\sum_{a\in\Omega_n}J_{\kappa(n)}\mathbf 1_{B_a}=\mathbf 1.
	\]
	Thus, the family $(B_a)_{a\in\Omega_n}$ is a partition of $\Omega_{\kappa(n)}$. Define $\varphi_n:\Omega_{\kappa(n)}\to\Omega_n$ by $\varphi_n(\omega):=a$ whenever $\omega\in B_a$.
	For every $a\in\Omega_n$, we have
	\[
	\mu_{\kappa(n)}\bigl(\varphi_n^{-1}(\{a\})\bigr)=\mu_{\kappa(n)}(B_a)=2^{-n}=\mu_n(\{a\}).
	\]
	Hence, \(\varphi_n\) preserves the uniform probability measure. By
	linearity,
	\[
	V(J_nf)=J_{\kappa(n)}(f\circ\varphi_n)
	\]
	for every $f\in L^p(\Omega_n)$. In particular, this identity holds for every $f\in H_n$. Since \(\varphi_n\) preserves the probability measure, it also follows that $f\circ\varphi_n\in H_{\kappa(n)}$
	whenever $f\in H_n$. Therefore,
	\[
	V\bigl(J_n(H_n)\bigr)\subseteq J_{\kappa(n)}\bigl(H_{\kappa(n)}\bigr).
	\]
	Notice also that $\kappa(n)\geq n$, because every fibre of \(\varphi_n\) has measure \(2^{-n}\), whereas
	each atom of \(\Omega_{\kappa(n)}\) has measure \(2^{-\kappa(n)}\). Since
	\[
	X_\omega
	=\overline{\operatorname{span}\{J_n(H_n):n\in\mathbb N\}}^{\,L^p(\Omega)},
	\]
	the preceding inclusion and the continuity of \(V\) imply that $V(X_\omega)\subseteq X_\omega$.
	
	We now use surjectivity to prove the reverse inclusion. Let $y\in X_\omega$.
	There exist $c\in\mathbb R$ and $x\in X_\omega$ such that
	\[
	y=V(c\mathbf 1+x)= c\mathbf 1+Vx.
	\]
	Since both \(y\) and \(Vx\) have mean zero, we obtain $c=0$.
	Hence, $y=Vx$, and therefore $V(X_\omega)=X_\omega$.
	
	For every $k\in\mathbb N$, set $I_k:=\kappa^{-1}(\{k\})$.
	The family of subspaces $V\bigl(J_n(H_n)\bigr)$, $n\in I_k$ is algebraically direct. Indeed, if
	\[
	\sum_{n\in F}V(J_nf_n)=0
	\]
	for some finite set $F\subseteq I_k$ and some functions $f_n\in H_n$, then the injectivity of \(V\) gives
	\[
	\sum_{n\in F}J_nf_n=0.
	\]
	By uniqueness of the exterior decomposition, $f_n=0$ for every $n\in F$.
	Since all these non-zero subspaces are contained in the finite-dimensional space $J_k(H_k)$, it follows that $I_k$ is finite. 
	
	Now, we claim that
	\[
	J_k(H_k)
	=
	\bigoplus_{n\in I_k}
	V\bigl(J_n(H_n)\bigr).
	\]
	Only the inclusion from left to right requires proof. Let $y\in J_k(H_k)$. Since $V(X_\omega)=X_\omega$, there exists $x=\sum_{n=1}^{\infty}J_nf_n\in X_\omega$
	such that $y=Vx$. By continuity of \(V\), 
	\[
	Vx = \lim_{N\to\infty} \sum_{n=1}^N V(J_nf_n) 
	\] 
	in \(L^p(\Omega)\). Moreover, 
	\[
	\mathcal E_k\bigl(V(J_nf_n)\bigr) = 
	\begin{cases} 
		V(J_nf_n), & \kappa(n)=k, \\[1ex] 
		0, & \kappa(n)\neq k, 
	\end{cases} 
    \]
    because $V(J_nf_n) \in J_{\kappa(n)}\bigl(H_{\kappa(n)}\bigr)$. Since \(I_k\) is finite, the continuity of \(\mathcal E_k\) yields 
    \[ 
    y = \mathcal E_k(y) = \mathcal E_k(Vx) = \sum_{n\in I_k} V(J_nf_n). 
    \] 
    This proves the claim. Taking dimensions yields
	\[
	2^k-1=\sum_{n\in I_k}(2^n-1).
	\]
	Since $\kappa(n)\geq n$ every $n\in I_k$ satisfies $n\leq k$. We now show that $I_k=\{k\}$.
	If $k\notin I_k$, then
	\[
	\sum_{n\in I_k}(2^n-1)\leq \sum_{n=1}^{k-1}(2^n-1)=2^k-k-1<2^k-1,
	\]
	which is impossible. On the other hand, if $k\in I_k$
	and \(I_k\) contains another index, then
	\[
	\sum_{n\in I_k}(2^n-1)>2^k-1,
	\]
	which is also impossible. Therefore, $I_k=\{k\}$.
	In particular, $\kappa(k)=k$ for every $k\in\mathbb N$. We have thus obtained a measure-preserving self-map $\varphi_k:\Omega_k\to\Omega_k$ for every $k\in\mathbb N$. Since \(\Omega_k\) is a finite uniform probability space, every measure-preserving self-map is bijective. Hence, $\varphi_k$ is bijective.
	Finally, the formula for arbitrary canonical exterior decompositions follows from the density of finite exterior sums and the continuity of \(V\).
\end{proof}

\begin{corollary}\label{c5.1}
	Assume that $p\neq 2$ and let $U:R_\omega^p\to R_\omega^p$ be a surjective linear isometry. Then there exist
	$\varepsilon\in\{-1,1\}$ and, for every $k\in\mathbb N$, a bijection $\varphi_k:\Omega_k\to\Omega_k$
	such that
	\[
	U\left(c\mathbf 1+\sum_{k=1}^{\infty}J_kf_k\right)
	=\varepsilon c\mathbf 1+\varepsilon\sum_{k=1}^{\infty}J_k(f_k\circ\varphi_k)
	\]
	for every $c\in\mathbb R$
	and every canonical exterior decomposition $\sum_{k=1}^{\infty}J_kf_k\in X_\omega$.
\end{corollary}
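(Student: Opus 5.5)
The plan is to reduce Corollary~\ref{c5.1} to Proposition~\ref{p5.1} and Proposition~\ref{p5.2}, after normalizing the sign.

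Since \(U\) is a linear isometry of \(\widehat X_\omega=R_\omega^p\) and \(p\neq2\), Proposition~\ref{p5.1} gives \(\varepsilon\in\{-1,1\}\) with \(U\mathbf 1=\varepsilon\mathbf 1\). Set \(V:=\varepsilon U\). Then \(V\) is again a linear isometry, it is surjective because \(U\) is and \(\varepsilon\neq0\), and \(V\mathbf 1=\varepsilon^2\mathbf 1=\mathbf 1\).

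Proposition~\ref{p5.2} now applies to \(V\). It yields bijections \(\varphi_k\colon\Omega_k\to\Omega_k\) such that
\[
V\Bigl(c\mathbf 1+\sum_{k=1}^{\infty}J_kf_k\Bigr)=c\mathbf 1+\sum_{k=1}^{\infty}J_k(f_k\circ\varphi_k)
\]
for every \(c\in\mathbb R\) and every canonical exterior decomposition. Since \(\varepsilon^2=1\), we have \(U=\varepsilon V\). Multiplying the displayed identity by \(\varepsilon\), and using linearity to distribute \(\varepsilon\) over the \(L^p\)-convergent series, gives exactly the stated formula.

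There is no genuine obstacle here, because all the substantive work is contained in Propositions~\ref{p5.1} and~\ref{p5.2}. The only points to check are that \(V\) keeps both surjectivity and the isometry property, and that the sign passes through the convergent series. Both are immediate.
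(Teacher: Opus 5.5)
Your proposal is correct and follows the paper's own proof. You normalize the sign via Proposition~\ref{p5.1} by setting $V:=\varepsilon U$, note that $V$ is a surjective linear isometry with $V\mathbf 1=\mathbf 1$, and then invoke Proposition~\ref{p5.2}; your remark about multiplying back by $\varepsilon$ through the convergent series just spells out a step the paper leaves implicit.
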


\begin{proof}
	By Proposition~\ref{p5.1}, there exists $\varepsilon\in\{-1,1\}$ such that $U\mathbf 1=\varepsilon\mathbf 1$. Define $V:=\varepsilon U$. Then \(V\) is a surjective linear isometry satisfying $V\mathbf 1=\mathbf 1$.
	The conclusion follows from Proposition~\ref{p5.2}.
\end{proof}

Note that the preceding corollary gives an explicit description of the
group of surjective linear isometries of \(\widehat X_\omega\). For a
finite set \(A\), we denote by $\operatorname{Sym}(A)$ the group of all bijections from \(A\) onto itself. Thus, an element of $\prod_{k=1}^{\infty}\operatorname{Sym}(\Omega_k)$ is a sequence $(\varphi_k)_{k=1}^{\infty}$, $\varphi_k:\Omega_k\to\Omega_k$, where each \(\varphi_k\) is a permutation of the atoms of
\(\Omega_k\).

Each such sequence, together with a global sign $\varepsilon\in\{-1,1\}$,
determines a surjective linear isometry by
\[
c\mathbf 1+\sum_{k=1}^{\infty}J_kf_k\longmapsto\varepsilon c\mathbf 1+\varepsilon
\sum_{k=1}^{\infty} J_k(f_k\circ\varphi_k).
\]
Conversely, every surjective linear isometry has this form.

Strictly speaking, the correspondence with the product group is
contravariant, since pullbacks compose in the reverse order. The inversion map identifies each symmetric group with its opposite group, so the contravariance of pullbacks causes no difficulty. Hence, for every $1<p<\infty$, $p\neq2$, the group of surjective linear isometries of \(\widehat X_\omega\) is naturally isomorphic to
\[
\{-1,1\}\times\prod_{k=1}^{\infty}\operatorname{Sym}(\Omega_k).
\]
Finally, since $|\Omega_k|=2^k$, the group $\operatorname{Sym}(\Omega_k)$
is isomorphic to the symmetric group on \(2^k\) symbols, which we
denote by $\mathfrak S_{2^k}$.
Therefore, the group of surjective linear isometries is isomorphic to
\[
\{-1,1\}\times\prod_{k=1}^{\infty}\mathfrak S_{2^k}.
\]
The exclusion of the Hilbertian case \(p=2\) is necessary. Indeed, the exterior blocks \(J_n(H_n)\) are mutually orthogonal in \(L^2(\Omega)\), and
	\[
	R_\omega^2
	=
	\mathbb R\mathbf 1
	\oplus_2
	\left(
	\bigoplus_{n=1}^{\infty}
	J_n(H_n)
	\right)_2
	\]
	is a separable infinite-dimensional Hilbert space. Hence, $R_\omega^2\cong\ell^2$ isometrically, and its group of surjective linear isometries is the full orthogonal group. In particular, an isometry need not preserve
	the constant functions or the exterior blocks.

\section{Proper internal compressions do not exist}

We conclude the paper by studying whether \(X_\omega\) can contain a distributional copy of itself localized on a proper measurable subset of the underlying probability space. This question is naturally formulated in terms of compressions. A \(\theta\)-compression preserves the original distribution with weight \(\theta\) and vanishes with the
remaining probability \(1-\theta\). Thus, it may be heuristically viewed as a distributional copy
localized on a region of measure \(\theta\).

Compressions arise naturally as elementary components in the construction of distributional embeddings between higher
Bourgain--Rosenthal--Schechtman spaces. In the present setting, however,
we show that no proper internal compression exists. We use the terminology of Konstantos and Motakis \cite{KM25}.

\begin{definition}\label{d4.1}
	Let \(0<\theta\leq 1\). A linear operator
	\(S \colon X_\omega \to X_\omega\) is called a
	\emph{\(\theta\)-compression} if
	\[
	\dist(Sx)=\theta\dist(x)+(1-\theta)\delta_0\quad\text{for every } x \in X_\omega.
	\]
\end{definition}
Equivalently, for every bounded Borel function
\(\psi \colon \mathbb{R} \to \mathbb{R}\), we have
\[
\int_\Omega \psi(Sx) \, d\mu= \theta \int_\Omega \psi(x) \, d\mu + (1-\theta)\psi(0).
\]
Applying this identity to $\psi_N(t)=\min\{|t|^p,N\}$ and letting \(N\to\infty\), the monotone convergence theorem yields $\lVert Sx \rVert_{L^p(\Omega)}^p=\theta\lVert x \rVert_{L^p(\Omega)}^p$, and hence $\lVert Sx \rVert_{L^p(\Omega)}=\theta^{1/p}\lVert x \rVert_{L^p(\Omega)}$. Thus, every non-zero compression is injective.

Although Definition~\ref{d4.1} is stated in terms of scalar
distributions, linearity implies a corresponding statement for
finite-dimensional joint distributions.

\begin{lemma}\label{l4.1}
	Let \(S \colon X_\omega \to X_\omega\) be a
	\(\theta\)-compression, and let \(x_1,\dots,x_m \in X_\omega\). Then
	\[
	\dist\bigl(Sx_1,\dots,Sx_m\bigr)=\theta\dist\bigl(x_1,\dots,x_m\bigr)+(1-\theta)\delta_{(0,\dots,0)}.
	\]
\end{lemma}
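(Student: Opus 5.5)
The plan is to reduce the joint statement to the scalar definition by means of the Cramér--Wold device, exactly as in the proof of Lemma~\ref{l2.5}, but working with characteristic functions so that the mixture with \(\delta_{(0,\dots,0)}\) is handled transparently.

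Fix \(x_1,\dots,x_m\in X_\omega\) and denote by \(\nu\) the left-hand side distribution on \(\mathbb R^m\). Let \(\lambda:=\theta\dist(x_1,\dots,x_m)+(1-\theta)\delta_{(0,\dots,0)}\) be the proposed right-hand side. Both are Borel probability measures on \(\mathbb R^m\). For every \(t=(t_1,\dots,t_m)\in\mathbb R^m\), I will compare their characteristic functions at \(t\). By linearity of \(S\),
\[
\sum_{j=1}^m t_jSx_j=S\Bigl(\sum_{j=1}^m t_jx_j\Bigr).
\]
The vector \(x_t:=\sum_j t_jx_j\) lies in \(X_\omega\). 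Applying the equivalent integral form of Definition~\ref{d4.1} to \(x_t\), separately with the bounded Borel functions \(\psi(s)=\cos s\) and \(\psi(s)=\sin s\), gives
\[
\int_\Omega e^{i\langle t,(Sx_1,\dots,Sx_m)\rangle}\,d\mu
=\theta\int_\Omega e^{i\langle t,(x_1,\dots,x_m)\rangle}\,d\mu+(1-\theta)\cdot 1.
\]
The left side is \(\widehat\nu(t)\). The right side is \(\widehat\lambda(t)\), because the characteristic function of a convex combination is the same convex combination of the characteristic functions, and \(\widehat{\delta_0}\equiv1\).

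Since \(t\) is arbitrary, \(\widehat\nu=\widehat\lambda\) on \(\mathbb R^m\). The uniqueness theorem for characteristic functions of Borel probability measures on \(\mathbb R^m\) then yields \(\nu=\lambda\), which is the claim. Equivalently, one may phrase this as the Cramér--Wold theorem: every one-dimensional projection \(\langle t,\cdot\rangle_{\#}\nu\) equals \(\theta\,\dist(x_t)+(1-\theta)\delta_0=\langle t,\cdot\rangle_{\#}\lambda\), so the two measures coincide.

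The only point requiring a little care is to check that the projection of the mixture \(\lambda\) under \(\langle t,\cdot\rangle\) is indeed \(\theta\dist(x_t)+(1-\theta)\delta_0\). This holds because push-forward is linear in the measure and sends \(\delta_{(0,\dots,0)}\) to \(\delta_0\). I do not expect any genuine obstacle: once linearity of \(S\) has turned linear combinations of the \(Sx_j\) into \(S\) of a single vector, the argument is routine.
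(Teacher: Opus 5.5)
Your proof is correct and follows essentially the paper's argument: linearity turns each one-dimensional projection $\langle t,\cdot\rangle$ into the scalar definition applied to $\sum_j t_jx_j$, and the Cramér--Wold theorem, which you unpack via characteristic functions using $\psi=\cos$ and $\psi=\sin$, identifies the joint law with the mixture. Your explicit check that push-forward under $\langle t,\cdot\rangle$ respects the convex combination and sends $\delta_{(0,\dots,0)}$ to $\delta_0$ is exactly the step the paper states without further comment.
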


\begin{proof}
	For every vector \(t=(t_1,\dots,t_m) \in \mathbb{R}^m\), linearity of
	\(S\) gives
	\[
	\sum_{j=1}^m t_jSx_j=S\left(\sum_{j=1}^m t_jx_j\right).
	\]
	By Definition~\ref{d4.1},
	\[
	\dist\left(\sum_{j=1}^m t_jSx_j\right)=\theta
	\dist\left(\sum_{j=1}^m t_jx_j\right)+(1-\theta)\delta_0.
	\]
	The right-hand side is precisely the distribution of the scalar
	projection in the direction \(t\) of the probability measure
	\[
	\theta\dist\bigl(x_1,\dots,x_m\bigr)+(1-\theta)\delta_{(0,\dots,0)}.
	\]
	Since all one-dimensional projections agree, the Cramér--Wold theorem
	yields the conclusion.
\end{proof}

We choose a Rademacher variable in the first exterior block. Let
\(r \in H_1=L_p^0(\{0,1\})\) be defined by $r(0)=1$, $r(1)=-1$.
We identify \(r\) with its canonical copy \(J_1r \in X_\omega\) and
keep the same notation. Thus, $\dist(r)=\frac{1}{2}\delta_{-1}+\frac{1}{2}\delta_1$.

\begin{lemma}\label{l4.2}
	Let \(S \colon X_\omega \to X_\omega\) be a
	\(\theta\)-compression, and define
	\[
	A:=\{\omega \in \Omega : \lvert Sr(\omega)\rvert=1\}.
	\]
	Then \(\mu(A)=\theta\), and $Sx=0$ almost everywhere on $A^c$
	for every \(x \in X_\omega\).
\end{lemma}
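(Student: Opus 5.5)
The first claim follows from Definition~\ref{d4.1} applied to \(x=r\). Since \(\dist(r)=\frac12\delta_{-1}+\frac12\delta_1\), we get
\[
\dist(Sr)=\tfrac{\theta}{2}\delta_{-1}+\tfrac{\theta}{2}\delta_1+(1-\theta)\delta_0 .
\]
Hence \(Sr\) takes, almost everywhere, only the values \(-1,0,1\). The event \(\{|Sr|=1\}\) has measure \(\theta\), and \(Sr=0\) almost everywhere on \(A^c\).

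For the second claim, fix \(x\in X_\omega\) and apply Lemma~\ref{l4.1} to the pair \((r,x)\). This gives
\[
\dist(Sr,Sx)=\theta\,\dist(r,x)+(1-\theta)\delta_{(0,0)}.
\]
We evaluate both sides on the Borel set \(E:=\{(s,t)\in\mathbb R^2: s=0,\ t\neq0\}\). The left-hand side equals \(\mu(\{Sr=0\}\cap\{Sx\neq0\})\), which up to null sets is \(\mu(A^c\cap\{Sx\neq0\})\). On the right, \(\delta_{(0,0)}(E)=0\). Moreover \(\dist(r,x)(E)=\mu(\{r=0,\ x\neq0\})=0\), because \(r\in\{-1,1\}\) everywhere. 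Therefore \(\mu(A^c\cap\{Sx\neq0\})=0\), that is, \(Sx=0\) almost everywhere on \(A^c\).

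There is no real obstacle here; the only point requiring care is to pass from scalar distributions to the joint distribution, and Lemma~\ref{l4.1} (via Cramér--Wold) already does this. One should also note that \(A\) is defined only up to null sets once a representative of \(Sr\) is fixed, which is harmless for the conclusion.
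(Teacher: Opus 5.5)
Your proof is correct and follows the paper's argument: you compute \(\dist(Sr)\) to get \(\mu(A)=\theta\) and \(A^c=\{Sr=0\}\) modulo null sets. You then apply Lemma~\ref{l4.1} to the pair \((r,x)\) and use that \(r\) never vanishes, so the set \(\{0\}\times(\mathbb R\setminus\{0\})\) receives no mass; the only difference is that you list the pair as \((r,x)\) rather than \((x,r)\), which is immaterial.
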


\begin{proof}
	Since \(S\) is a \(\theta\)-compression,
	\[
	\dist(Sr)=\frac{\theta}{2}\delta_{-1}+(1-\theta)\delta_0+\frac{\theta}{2}\delta_1.
	\]
	Therefore, \(\mu(A)=\theta\), and \(A^c=\{Sr=0\}\) modulo null sets. Fix \(x \in X_\omega\). By
	Lemma~\ref{l4.1},
	\[
	\dist(Sx,Sr)=\theta\dist(x,r)+(1-\theta)\delta_{(0,0)}.
	\]
	The source variable \(r\) never vanishes, hence, \(\dist(x,r)\) assigns zero mass to
	\(\mathbb{R}\times\{0\}\). Consequently, \(\dist(Sx,Sr)\) assigns no mass to $(\mathbb{R}\setminus\{0\})\times\{0\}$. Equivalently,
	\[
	\mu\bigl(\{Sx\neq 0\}\cap\{Sr=0\}\bigr)=0.
	\]
	Since \(A^c=\{Sr=0\}\) modulo null sets, we conclude that \(Sx=0\) almost everywhere on \(A^c\).
\end{proof}

The common support obtained in Lemma~\ref{l4.2} will depend on only finitely many exterior coordinates. We now prove that a proper event of this type cannot support an infinite-dimensional subspace of \(X_\omega\).

\begin{lemma}\label{l4.3}
	Let \(F \subseteq \mathbb{N}\) be finite, and let $A\in\sigma(\pi_n:n \in F)$
	satisfy \(\mu(A)<1\). Then
	\[
	\mc{X}_{A}:=\left\{x \in X_\omega : x=0 \text{ almost everywhere on } A^c \right\} \subseteq \bigoplus_{n \in F} J_n(H_n).
	\]
	In particular, the subspace $\mc{X}_{A}$
	is finite-dimensional.
\end{lemma}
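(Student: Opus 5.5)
Take $x\in\mc X_A$ with canonical exterior decomposition $x=\sum_{n}J_nx_n$. The plan is to show that $x_m=0$ for every $m\notin F$; this gives the inclusion, and finite-dimensionality then follows because $\dim H_n=2^n-1<\infty$ and $F$ is finite. The approach is a conditioning argument: split off the coordinates in $F$ and use the independence of the remaining block.

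\textbf{Step 1: splitting.} Write $\Omega=\Omega_F\times\Omega'$, where $\Omega_F:=\prod_{n\in F}\Omega_n$ and $\Omega':=\prod_{n\notin F}\Omega_n$. As in the proof of Proposition~\ref{p2.2}, set $S_F:=\sum_{n\in F}J_nx_n$ and $R_F:=x-S_F=\sum_{n\notin F}J_nx_n$. Then $S_F$ depends only on $\Omega_F$, and $R_F$ is measurable with respect to $\sigma(\pi_n:n\notin F)$, so it depends only on $\Omega'$. Also, $A=A_F\times\Omega'$ for some $A_F\subseteq\Omega_F$. Since $\mu(A)<1$ and $\Omega_F$ is finite with atoms of positive measure, there is a point $\alpha\in\Omega_F\setminus A_F$.

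\textbf{Step 2: slicing.} Because $x=0$ almost everywhere on $A^c\supseteq\{\alpha\}\times\Omega'$ and the atom $\{\alpha\}$ has positive mass, Fubini gives
\[
S_F(\alpha)+R_F(\xi)=0\quad\text{for almost every }\xi\in\Omega'.
\]
So $R_F$ is almost everywhere equal to the constant $-S_F(\alpha)$. But $R_F$ has mean zero, being an $L^p$-limit of mean-zero functions. Hence $R_F=0$.

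\textbf{Step 3: conclusion.} Applying $\mc E_m$ for $m\notin F$, exactly as in the uniqueness part of the canonical decomposition, gives $J_mx_m=\mc E_m(R_F)=0$, and so $x_m=0$. Therefore $x=S_F\in\bigoplus_{n\in F}J_n(H_n)$.

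The only delicate point is the measurability bookkeeping in Step 2. One has to pick representatives of $S_F$ and $R_F$ so that the slice identity holds pointwise on a full-measure subset of $\Omega'$, and this uses the finiteness of $\Omega_F$. Otherwise the argument is routine.
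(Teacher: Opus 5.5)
Your proof is correct, but it takes a different route from the paper's.

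\textbf{The paper's argument.} It works coordinate by coordinate. For each fixed $m\notin F$ it applies $\mathbb E[\,\cdot\mid\Sigma_m]$ to $\mathbf 1_{A^c}x=0$. The independence of $A^c$ from $\Sigma_m$ gives $\mathbb E[\mathbf 1_{A^c}J_mx_m\mid\Sigma_m]=\mu(A^c)J_mx_m$. It then checks that every other term $\mathbf 1_{A^c}J_nx_n$ contributes only a constant, passes these bounded operators through the exterior series, and removes the constant by integrating. This leaves $\mu(A^c)J_mx_m=0$.

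\textbf{Your argument.} You use the atomic product structure $\Omega=\Omega_F\times\Omega'$. Restricting to the single slab $\{\alpha\}\times\Omega'\subseteq A^c$, which has positive measure, shows that the whole tail $R_F$ is a.e.\ constant. It is then zero because it has mean zero, so all components $x_m$ with $m\notin F$ vanish at once.

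\textbf{What each buys.} Your version is shorter and reuses the $S_F/R_F$ splitting from Proposition~\ref{p2.2}. It also avoids tracking the independence of the mixed products $\mathbf 1_{A^c}J_nx_n$ and interchanging conditional expectations with the series. The bookkeeping you flag as delicate is essentially trivial here. Writing $\mu=\mu_F\otimes\mu'$, one has $\mu(\{\alpha\}\times E)=\mu_F(\{\alpha\})\,\mu'(E)$ for measurable $E\subseteq\Omega'$. So once $R_F$ is written as a function on $\Omega'$, no genuine Fubini argument is needed. The paper's argument, in turn, never picks a point of $\Omega_F\setminus A_F$ or uses atoms. For a single coordinate, its mechanism only needs $\mu(A)<1$ and that $A$ does not depend on the $m$-th exterior coordinate.
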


\begin{proof}
	Let $x=\sum_{n=1}^{\infty} J_nx_n\in X_\omega$ be supported in \(A\), and fix \(m \notin F\). We prove that \(x_m=0\). Since \(x=0\) almost everywhere on \(A^c\), we have \(\mathbf{1}_{A^c}x=0\). Taking conditional expectation with respect to \(\Sigma_m=\sigma(\pi_m)\), we obtain $\mathbb{E}\left[\mathbf{1}_{A^c}x\mid \Sigma_m\right]=0$. We decompose
	\[
	x=J_mx_m+\sum_{n\neq m} J_nx_n.
	\]
	Because \(A^c\) depends only on the exterior coordinates indexed by \(F\), and \(m \notin F\), the event \(A^c\) is independent of \(\Sigma_m\). Therefore,
	\[
	\mathbb{E} \left[\mathbf{1}_{A^c}J_mx_m\mid\Sigma_m \right]=\mu(A^c)J_mx_m.
	\]
	
	For the remaining terms, we distinguish two cases. If
	\(n \notin F\cup\{m\}\), then \(\mathbf{1}_{A^c}J_nx_n\) is independent of \(\Sigma_m\), and
	\[
	\int_\Omega \mathbf{1}_{A^c}J_nx_n\, d\mu=\mu(A^c)\int_\Omega J_nx_n \, d\mu=0.
	\]
	Hence, $\mathbb{E}\left[\mathbf{1}_{A^c}J_nx_n\mid\Sigma_m\right]=0$.
	If \(n \in F\), then \(\mathbf{1}_{A^c}J_nx_n\) is also independent
	of \(\Sigma_m\). Consequently,
	\[
	\mathbb{E}\left[\mathbf{1}_{A^c}J_nx_n\mid\Sigma_m\right]
	=\left(\int_\Omega\mathbf{1}_{A^c}J_nx_n\, d\mu\right)\mathbf{1}.
	\]
	
	Multiplication by \(\mathbf{1}_{A^c}\) and conditional expectation
	with respect to \(\Sigma_m\) are bounded operators on \(L^p(\Omega)\).
	We may therefore apply them to the convergent exterior decomposition
	of \(x\) and pass to the limit. Since \(F\) is finite, the preceding
	identities give
	\[
	0=\mu(A^c)J_mx_m+c_m\mathbf{1}
	\]
	for some scalar \(c_m \in \mathbb{R}\). Integrating both sides over \(\Omega\), and using the fact that \(x_m \in H_m\), we obtain
	\[
	0=\mu(A^c)\int_\Omega J_mx_m \, d\mu+c_m=c_m.
	\]
	Thus, \(c_m=0\). Since \(\mu(A^c)>0\), it follows that \(J_mx_m=0\), and hence \(x_m=0\). This holds for every \(m \notin F\). Therefore,
	\[
	x\in\bigoplus_{n \in F} J_n(H_n).
	\]
	This completes the proof.
\end{proof}

We can now prove the main result of this section.

\begin{theorem}
	Let \(S \colon X_\omega \to X_\omega\) be a \(\theta\)-compression, where \(0<\theta\leq 1\). Then \(\theta=1\). Equivalently, every non-zero compression of \(X_\omega\) into itself is a distributional embedding.
\end{theorem}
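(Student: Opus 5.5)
We argue by contradiction: assume \(0<\theta<1\). The introduction already indicates the route: confine the whole range of \(S\) to a proper event that depends on finitely many exterior coordinates, and then invoke Lemma~\ref{l4.3}.

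First, let \(A=\{|Sr|=1\}\) be the event of Lemma~\ref{l4.2}. Then \(\mu(A)=\theta<1\), and every \(Sx\) vanishes almost everywhere on \(A^c\). We must show that \(A\) depends on finitely many exterior coordinates. The element \(Sr\in X_\omega\) has distribution \(\frac{\theta}{2}\delta_{-1}+(1-\theta)\delta_0+\frac{\theta}{2}\delta_1\). In particular \(\essran(Sr)\subseteq\{-1,0,1\}\) is finite, so Proposition~\ref{p2.2} gives \(|\supp_{\mathrm{ext}}(Sr)|\le 2\).

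Put \(F:=\supp_{\mathrm{ext}}(Sr)\). Then \(Sr=\sum_{n\in F}J_n y_n\) is measurable with respect to \(\sigma(\pi_n:n\in F)\), and hence so is \(A=\{(Sr)^2=1\}\). The set \(F\) is non-empty, because \(Sr\ne0\) by injectivity.

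Second, Lemma~\ref{l4.2} shows that \(S(X_\omega)\subseteq\mc{X}_A\). Lemma~\ref{l4.3} then places \(S(X_\omega)\) inside the finite-dimensional space \(\bigoplus_{n\in F}J_n(H_n)\).

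Third, the norm identity following Definition~\ref{d4.1} gives \(\|Sx\|=\theta^{1/p}\|x\|\), so \(S\) is injective because \(\theta>0\). An injective linear map from the infinite-dimensional space \(X_\omega\) cannot have finite-dimensional range, which is a contradiction. Hence \(\theta=1\).

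For the equivalent formulation: when \(\theta=1\), the defining identity reads \(\dist(Sx)=\dist(x)\), so \(S\) is a distributional embedding.

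The only delicate step is showing that \(A\) is measurable with respect to finitely many coordinates. This relies on combining finite-valuedness with Proposition~\ref{p2.2}, and on the fact that a finite exterior sum is measurable with respect to the corresponding \(\sigma\)-algebra. Everything else is a direct appeal to the earlier lemmas.
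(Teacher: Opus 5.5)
Your proof is correct and follows the paper's argument essentially step for step. You use the event $A=\{|Sr|=1\}$ from Lemma~\ref{l4.2}, Proposition~\ref{p2.2} to make $\supp_{\mathrm{ext}}(Sr)$ finite (your sharper bound $|F|\le 2$ is a harmless refinement), Lemma~\ref{l4.3} to trap $\operatorname{Ran}(S)$ in $\bigoplus_{n\in F}J_n(H_n)$, and injectivity from $\|Sx\|=\theta^{1/p}\|x\|$ to reach the contradiction.
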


\begin{proof}
	Assume, towards a contradiction, that \(0<\theta<1\). Let
	\(r \in X_\omega\) be the Rademacher variable chosen above, and define $A:=\{\lvert Sr\rvert=1\}$.
	By Lemma~\ref{l4.2}, we have \(\mu(A)=\theta<1\), and $Sx=0$ almost everywhere on $A^c$ for every \(x \in X_\omega\). The random variable \(Sr\) has distribution
	\[
	\frac{\theta}{2}\delta_{-1}+(1-\theta)\delta_0+\frac{\theta}{2}\delta_1.
	\]
	Hence, \(Sr\) has finite essential range. By
	Proposition~\ref{p2.2}, its
	exterior support $F:=\supp_{\mathrm{ext}}(Sr)$
	is finite. Since \(Sr\) depends only on the exterior coordinates indexed by \(F\), the event \(A=\{\lvert Sr\rvert=1\}\) belongs to \(\sigma(\pi_n:n \in F)\). By
	Lemma~\ref{l4.3},
	\[
	\operatorname{Ran}(S)\subseteq\bigoplus_{n \in F} J_n(H_n).
	\]
	The right-hand side is finite-dimensional. On the other hand, \(S\) is injective because
	\[
	\lVert Sx \rVert_{L^p(\Omega)}=\theta^{1/p}\lVert x \rVert_{L^p(\Omega)}
	\]
	for every \(x \in X_\omega\). Thus, \(S\) defines an injective linear
	map from the infinite-dimensional space \(X_\omega\) into a
	finite-dimensional space. This is impossible. Therefore, \(\theta=1\).
\end{proof}

\section*{Acknowledgements}

The author is grateful to P. Motakis and K. Konstantos for their interest in this work, for reading a preliminary version of the manuscript, and for several valuable comments and suggestions. Their observations were particularly helpful in clarifying the connection between distributional embeddings and linear isometries of \(R_\omega^p\).

\end{document}